\newcommand{\mathbold}{\mathbb}
\newcommand{\bF}{{\mathbold F}}
\newcommand{\bZ}{{\mathbold Z}}
\newcommand{\bC}{{\mathbold C}}
\newcommand{\bR}{{\mathbold R}}
\newcommand{\bRpl}{{\bR_+}}
\newcommand{\bN}{{\mathbold N}}
\newcommand{\tn}{\otimes}           
\newcommand{\setof}[2]{\{#1 \;|\; #2\}}
\newcommand{\bigsetof}[2]{\big\{#1 \;|\; #2\big\}}
\newcommand{\longmap}{{\,\longrightarrow\,}}
\def\longisomap{{\,\buildrel \sim\over\longrightarrow\,}} 
\newcommand{\eps}{\varepsilon}
\newcommand{\Mod}[1]{\mathsf{Mod}_{{#1}}}
\newcommand{\Alg}[1]{\mathsf{Alg}_{{#1}}}
\newcommand{\Hom}{\mathrm{Hom}}
\newcommand{\Sch}{\Lambda^{\mathrm{Sch}}}
\newcommand{\Wsch}{W^{\mathrm{Sch}}}
\newcommand{\anti}[1]{{\{#1\}}}
\newcommand{\srp}{\sigma}
\newcommand{\Nhat}[1]{\bN[{#1}]{\sphat}\,}
\newcommand{\id}{{\mathrm{id}}}
\newcommand{\Bool}{\mathbold{B}}
\DeclareMathOperator{\pker}{pker}
\DeclareMathOperator{\sker}{pker}
\newcommand{\pleq}{\subseteq}
\newcommand{\pnleq}{\not\subseteq}
\newcommand{\wleq}{\preccurlyeq}
\newcommand{\wgeq}{\succcurlyeq}
\newcommand{\Wp}{W_{(p)}}
\newcommand{\phiSch}{\varphi^{\mathrm{Sch}}}
\newcommand{\rightlabelxyarrows}[2]{{\ar@<0.7ex>^-{#1}[r]\ar@<-0.7ex>_-{#2}[r]}}
\newcommand{\displaylabelfork}[6]{{	\entrymodifiers={+!!<0pt,\fontdimen22\textfont2>}
	\def\objectstyle{\displaystyle}
\xymatrix{{#1} \ar^-{#2}[r] & {#3} \ar@<0.7ex>^-{#4}[r]\ar@<-0.7ex>_-{#5}[r] & {#6}}}}
\newcommand{\predisplaylabelfork}[6]{{{#1} \ar^-{#2}[r] & {#3} \ar@<0.7ex>^-{#4}[r]\ar@<-0.7ex>_-{#5}[r] & {#6}}}
\newtheoremstyle{mythm}{}{}%
  {\itshape}
  {}
  {\bfseries}
  {}
  { }
  {\thmnumber{#2.\hspace{1.5mm}}\thmname{#1}\thmnote{ {\mdseries(#3)}}.}
\newtheorem*{theorem*}{Theorem} 
\newtheoremstyle{intro}{}{}%
  {\itshape}
  {}
  {\bfseries}
  {}
  { }
  {\thmname{#1}\thmnumber{ #2}\thmnote{ #3}.}
\numberwithin{equation}{subsection}
\theoremstyle{mythm}
\newtheorem{theorem}[subsection]{Theorem}
\newtheorem{proposition}[subsection]{Proposition}
\newtheorem{lemma}[subsection]{Lemma}
\newtheorem{corollary}[subsection]{Corollary}
\theoremstyle{intro}
\title[Boolean Witt vectors and an integral Edrei--Thoma theorem]{Boolean Witt vectors and an integral Edrei--Thoma theorem}
\author[J.~Borger, D.~Grinberg]{James Borger\thanks{Supported the Australian Research Council, DP120103541 and FT110100728.}, Darij Grinberg}
\begin{document}

\begin{abstract}
A subtraction-free definition of the big Witt vector construction was recently given by the first author. This
allows one to define the big Witt vectors of any semiring. Here we give an explicit combinatorial description
of the big Witt vectors of the Boolean semiring. We do the same for two variants of the big Witt vector
construction: the Schur Witt vectors and the $p$-typical Witt vectors. We use this to give an explicit
description of the Schur Witt vectors of the natural numbers, which can be viewed as the classification of
totally positive power series with integral coefficients, first obtained by Davydov. We also determine the
cardinalities of some Witt vector algebras with entries in various concrete arithmetic semirings.

\end{abstract}

\begin{classification}
Primary 13F35;
Secondary 16Y60, 05E05, 05E10.
\end{classification}

\begin{keywords}
Witt vector, semiring, symmetric function, Schur positivity, total positivity, Boolean algebra
\end{keywords}

\maketitle

\section{Introduction}

Recently in~\cite{Borger:totalpos}, the first author gave an extension of the big Witt vector functor to a
functor $W$ on (commutative) semirings---in other words, a definition of Witt vectors that does not require
subtraction. Witt vectors are usually thought of as being related to the $p$-adic numbers or, more broadly, the
finite adeles. For example, the (relative) de Rham--Witt complex, in its single-prime form, computes crystalline
cohomology; and in its multiple-prime form, it formally unifies the crystalline cohomologies at all primes.
(See, e.g.,~\cite{Bloch:dRW}\cite{Illusie:dRW-1016}\cite{Langer-Zink:dRW}\cite{Chatzistamatiou:dRW}\cite{Hesselholt-Madsen:nilpotent}\cite{Hesselholt:big-dRW}.) But the extension of the Witt construction to semirings shows
that positivity also plays a role, and thus so does the archimedean prime. One striking expression of this is
that $W(\bN)$ is countable and has no zero divisors, and therefore can connect to analytic mathematics in ways
that $W(\bZ)$, which is pro-discrete and has many zero divisors, cannot. For instance, there is a canonical
injective homomorphism from $W(\bN)$ to the ring of entire functions on $\bC$. 
In~\cite{Borger:LRFOE}, it was argued that it is reasonable to think of $W(\bZ)$ as $\bZ\tn_{\bF_1}\bZ$, where
$\bF_1$ is the hypothetical field with one element; so it is notable that $W(\bN)$, a more fundamental
alternative, has some analytic nature.

For reasons like these, one would like to understand better the range of positivity phenomena that can occur in
Witt vectors. Here a special place is held by the Boolean semiring $\Bool=\{0,1\}$, with $1+1=1$. This is
because, aside from the fields, it is the unique nonzero semiring which has no quotients other than itself and
$0$. (See Golan~\cite{Golan:book1}, p.\ 87.) One might imagine $\Bool$ as the residue field of $\bN$ at the
infinite prime. In fact, as we show in~(\ref{prop:nonrings-map-to-Bool}), any semiring $A$ not
containing $-1$ necessarily admits a map to $\Bool$, much as any ring not containing $1/p$
admits a map to some field of characteristic $p$. In particular, there is a map
$W(A)\to W(\Bool)$, and hence structure related to Boolean Witt vectors is necessarily present in the Witt
vectors of any semiring that is not a ring. So not only is an understanding of $W(\Bool)$ helpful in
understanding positivity in Witt vectors, it is all but required.

The purpose of this paper is then to give an explicit description of $W(\Bool)$. From the point of view above,
this would be analogous to the description of Witt vectors of fields of characteristic $p$ in concrete $p$-adic
terms (which incidentally explains why $p$-adic phenomena appear in the Witt vectors of any ring not containing
$1/p$). We also give an explicit description $\Wsch(\Bool)$, where $\Wsch$ is the Schur Witt vector
functor of~\cite{Borger:totalpos}.

In order to state our main results precisely, we need to recall the basic definitions of $W$ and $\Wsch$ for semirings. For details, see~\cite{Borger:totalpos}. Let $\Lambda_\bZ$ denote the usual ring of symmetric functions in the formal variables $x_1,x_2,\dots$. It is the inverse limit in the category of graded rings of the rings $\bZ[x_1,\dots,x_n]^{S_n}$ of symmetric polynomials, where the transition maps are given by $f(x_1,\dots,x_n)\mapsto f(x_1,\dots,x_{n-1},0)$, and where each variable $x_i$ has degree $1$. Then $\Lambda_\bZ$ has two standard $\bZ$-linear bases: the monomial symmetric functions $(m_\lambda)_{\lambda\in P}$ and the Schur symmetric functions $(s_\lambda)_{\lambda\in P}$, both of which are indexed by the set $P$ of partitions. (We refer the reader to chapter 1 of Macdonald's book~\cite{Macdonald:SF} for the basics on symmetric functions.) If we write $\Lambda_\bN=\bigoplus_{\lambda\in P} \bN m_\lambda$ and $\Sch=\bigoplus_{\lambda\in P}\bN s_\lambda$, where $\bN=\{0,1,\dots\}$, then by some standard positivity facts in the theory of symmetric functions, both are subsemirings of $\Lambda_\bZ$ and we have $\Sch\subseteq\Lambda_\bN$. For any semiring $A$, we then define
	$$
	W(A) = \Hom_{\mathrm{semiring}}(\Lambda_\bN,A), \quad \quad \Wsch(A) = \Hom_{\mathrm{semiring}}(\Sch,A).
	$$
We call these the sets of big Witt vectors and Schur Witt vectors with entries in $A$. When $A$ is a ring, both agree with the usual ring of big Witt vectors defined by $\Hom_{\Alg{\bZ}}(\Lambda_\bZ,A)$ in the sense that any map from $\Sch$ or $\Lambda_\bN$ to $A$ extends uniquely to $\Lambda_\bZ$. The semiring structures on $W(A)$ and $\Wsch(A)$ are the unique functorial ones that agree with the usual Witt vector ring structure when $A$ is a ring. Their existence follows from some further standard positivity results in the theory of symmetric functions. 

Our main result is the following combinatorial description of $W(\Bool)$ and $\Wsch(\Bool)$:

\begin{theorem}
\label{thm:intro-B}
\begin{enumerate}
	\item \label{part:W(Bool)-bijection}
		There is a bijection
			\begin{align*}
			\bN^2\cup\{\infty\} &\longisomap W(\Bool)
			\end{align*}
		sending $(x,y)$ to the unique Witt vector satisfying, for all $\lambda\in P$,
				\begin{equation}
				m_\lambda \longmapsto
					\begin{cases}
					0 & \text{if }\lambda_{x+1} \geq y+1 \\ 
					1 & \text{otherwise,} 			
					\end{cases}		
				\end{equation}
		and sending $\infty$ to the unique Witt vector satisfying $m_\lambda\mapsto 1$ for all $\lambda\in P$.
	\item \label{part:W(Bool)-semiring}
		In these terms, the semiring structure on $W(\Bool)$ is given by the laws
		\begin{gather*}
		(x,y)+(x',y') = (x+x',\max\{y,y'\}), \quad (0,y)\cdot (0,y') = (0,\min\{y,y'\}), \\
		0 = (0,0),\quad 1=(1,0),\quad  \infty+z=\infty=\infty\cdot\infty,\quad  \infty\cdot (0,y) = (0,y),
		\end{gather*}
		and the pointwise partial order on $W(\Bool)$ (with $0<1$) corresponds to the usual, componentwise 
		partial order on $\bN^2$ with terminal element $\infty$.
	\item \label{part:Wsch(Bool)-semiring}
		The obvious analogues of parts (1) and (2) for $\Wsch(\Bool)$ and the Schur basis 
		$(s_\lambda)_{\lambda\in P}$ also hold except that the semiring structure on $\Wsch(\Bool)$ is given
		by the laws
		\begin{gather*}
		(x,y)+(x',y') = (x+x',y+y'), \quad (0,y)\cdot (0,y') = (yy',0), \\
		0 = (0,0),\quad 1=(1,0),\quad  \infty+z=\infty=\infty\cdot\infty,\quad  \infty\cdot (0,y) = \infty.
		\end{gather*}
\end{enumerate}
\end{theorem}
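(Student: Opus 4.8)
The plan is to describe a Witt vector $\varphi\in W(\Bool)$ by the set $S_\varphi=\{\lambda\in P:\varphi(m_\lambda)=1\}$: since $\Bool=\{0,1\}$ and $\Lambda_{\bN}=\bigoplus_\lambda\bN m_\lambda$, the value of $\varphi$ on any element is the "or" of its values on the $m_\lambda$ occurring, so $\varphi$ is determined by $S_\varphi$, and $S_\varphi$ arises from a semiring homomorphism iff $\varnothing\in S_\varphi$ and, for all $\lambda,\mu$,
$$\mathrm{supp}(m_\lambda m_\mu)\cap S_\varphi\neq\varnothing\iff\lambda\in S_\varphi\ \text{and}\ \mu\in S_\varphi,$$
where $\mathrm{supp}(m_\lambda m_\mu)$ is the set of $\nu$ with positive coefficient in $m_\lambda m_\mu$ (all coefficients being $\geq0$). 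The first step is the combinatorics of this support: writing $\nu\in\mathrm{supp}(m_\lambda m_\mu)$ as $\nu=\alpha+\beta$ for rearrangements $\alpha,\beta$ (padded by zeros) of $\lambda,\mu$, one sees that every such $\nu$ contains both $\lambda$ and $\mu$ as Young diagrams and lies between $\lambda\cup\mu$ (sorted concatenation) and $\lambda+\mu$ (componentwise sum) in dominance order, that $\lambda\cup\mu$ and $\lambda+\mu$ always occur, and that for every $\nu\supseteq\lambda$ one has $\nu\in\mathrm{supp}(m_\lambda m_{\mathrm{sort}(\nu-\lambda)})$. For $\Wsch$ the identical reduction applies with $\mathrm{supp}(m_\lambda m_\mu)$ replaced by the Littlewood--Richardson support $\{\nu:c^\nu_{\lambda\mu}>0\}$, which has the same three properties, plus the rectangle rule $c^{(c^d)}_{\mu\nu}=1$ exactly when $\nu_i=c-\mu_{d+1-i}$ for all $i$.

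Next I would classify the admissible $S=S_\varphi$. The "$\Leftarrow$" case $\mu=\varnothing$ is automatic; "$\Rightarrow$" says precisely that $Z:=P\setminus S$ is an order filter (up-set) of Young's lattice, since $\lambda\in Z$ forces $\mathrm{supp}(m_\lambda m_\mu)\subseteq Z$ for all $\mu$ and these supports exhaust $\{\nu\supseteq\lambda\}$. The crux, extracted from "$\Leftarrow$", is: (i) every minimal $\rho\in Z$ is a rectangle, for otherwise $\rho$ has two removable corners, and deleting them in two different rows gives $\rho^{(1)},\rho^{(2)}\in S$ with componentwise maximum $\rho$, so every $\nu\in\mathrm{supp}(m_{\rho^{(1)}}m_{\rho^{(2)}})$ contains $\rho$, i.e. the support lies in $Z$, contradicting "$\Leftarrow$"; (ii) $Z$ has at most one minimal element, for otherwise take incomparable minimal rectangles $\rho=(c_1^{d_1})$ (wide, short) and $\sigma=(c_2^{d_2})$ (narrow, tall), put $\lambda=\rho\setminus\text{corner}$, $\mu=\sigma\setminus\text{corner}$, and show $\mathrm{supp}(m_\lambda m_\mu)\subseteq Z$: any $\nu$ there avoiding both $\rho,\sigma$ must have $\nu_{d_1}=c_1-1$ and $\nu_{d_2}=c_2-1$, which the description $\nu=\alpha+\beta$ rules out (the coordinates carrying a $\lambda$-part in $\alpha$ and a $\mu$-part in $\beta$ cannot be placed without creating a $d_1$-st part $\geq c_1$ or a $d_2$-nd part $\geq c_2$). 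Hence $Z$ is empty or the principal filter on one rectangle $(y+1)^{x+1}$, giving the bijection $\bN^2\cup\{\infty\}\xrightarrow{\ \sim\ }W(\Bool)$. Conversely each such $S$ is a down-set, and for $\lambda,\mu\in S$ one produces $\nu\in\mathrm{supp}(m_\lambda m_\mu)\cap S$ by overlapping the (at most $x$ each) parts of $\lambda,\mu$ exceeding $y$ into the first $\le x$ coordinates and spreading the remaining parts disjointly, so that $\nu$ still has $\le x$ parts exceeding $y$. The $\Wsch$ classification is identical except that (ii) needs the Littlewood--Richardson analogue of the $\nu=\alpha+\beta$ argument (via lattice fillings and the rectangle rule), and the "$\Leftarrow$" witness must be LR-fillable, which holds because one may choose $\mu$ so that $\lambda$... so that the skew shape $\lambda/\mu$ is a translate of a straight shape.

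For the semiring structure I would use that $+,\times$ on $W(A)=\Hom(\Lambda_{\bN},A)$ come from the co-operations $\Delta_+(m_\lambda)=\sum_{\mu\sqcup\nu=\lambda}m_\mu\tn m_\nu$, $\Delta_\times(p_n)=p_n\tn p_n$, and the counits $\varepsilon_0,\varepsilon_1$ (for $\Wsch$: $\Delta_+(s_\lambda)=\sum c^\lambda_{\mu\nu}s_\mu\tn s_\nu$, and $\Delta_\times$ restricted to $\Sch$ has coefficients the Kronecker coefficients). Since by the classification $\varphi+\psi$ and $\varphi\cdot\psi$ are again of the form $(a,b)$ or $\infty$, it is enough to evaluate on a few basis elements. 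With $\varphi=(x,y)$, $\psi=(x',y')$ one gets $(\varphi+\psi)(m_\lambda)=1$ whenever $\lambda$ omits $R'':=(\max\{y,y'\}+1)^{x+x'+1}$ (distribute the parts of $\lambda$ exceeding $\max\{y,y'\}$ at most $x$ to $\mu$ and at most $x'$ to $\nu$, and the rest so neither side exceeds its quota) and $(\varphi+\psi)(m_{R''})=0$ (the only splittings of the rectangle $R''$ put $\geq x+1$ big parts in $\mu$ or $\geq x'+1$ in $\nu$), which forces $\varphi+\psi=(x+x',\max\{y,y'\})$; similarly $(0,y)\cdot(0,y')=(0,\min\{y,y'\})$ by evaluating on the rows $m_{(n)}$. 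The $\Wsch$ laws come out the same way: $(\varphi+\psi)(s_{R''})=0$ with $R''=(y+y'+1)^{x+x'+1}$ now from the rectangle rule (if $\mu\not\supseteq(y+1)^{x+1}$ its $(x{+}1)$-st through $(x{+}x'{+}1)$-st parts are $\le y$, so the complement partition $\nu$ has those rows $\geq y'+1$, i.e. $\nu\supseteq(y'+1)^{x'+1}$), giving $\varphi+\psi=(x+x',y+y')$; and $(0,y)\cdot(0,y')=(yy',0)$ follows by transposing all three partitions in the Kronecker coefficient and using that $s_\lambda$ evaluated at the $yy'$ variables $x_iz_j$ ($1\le i\le y$, $1\le j\le y'$) is nonzero iff $\ell(\lambda)\le yy'$. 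The $\infty$-laws are immediate from monotonicity of the operations, with the convention that $\infty\cdot(0,y)=\infty$ is meant for $y\geq1$ (while $\infty\cdot 0=0$ as in any semiring).

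The main obstacle is step (ii) of the classification --- that $Z$ has at most one minimal element. For $W(\Bool)$ this is a delicate manipulation of the decompositions $\nu=\alpha+\beta$; for $\Wsch(\Bool)$ the corresponding statement about Littlewood--Richardson coefficients is fiddlier still, and this, together with checking that the explicit $\nu$ witnessing "$\Leftarrow$" for the rectangle-complements is genuinely Littlewood--Richardson fillable, is where the bulk of the work lies.
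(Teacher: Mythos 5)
Your proposal takes essentially the same route as the paper: describe a homomorphism $\varphi\in W(\Bool)$ by the down-set $S_\varphi=\{\lambda:\varphi(m_\lambda)=1\}$, classify the admissible complements $Z=P\setminus S_\varphi$ as the principal up-sets on rectangles plus the empty set, then read off the semiring laws from the coproducts. Your step (i), deriving rectangularity of minimal elements of $Z$ directly from the containment $\lambda,\mu\pleq\nu$ for constituents $\nu$ of $m_\lambda m_\mu$, is a clean reorganization (the paper's Proposition~(\ref{pro:W(Bool)-comblem3}) does both rectangularity and uniqueness of the minimal element in one stroke, and your step (ii) is precisely the special case of that lemma where $\lambda$ and $\mu$ are rectangles with a corner removed). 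The parallel treatment for $\Wsch(\Bool)$ via the Littlewood--Richardson support, with the rectangle-complement fact $c^{(c^d)}_{\mu\nu}\le 1$, is also the same skeleton as section~\ref{sec:Wsch-Bool}.

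However, there is one genuine error: you write that ``the $\infty$-laws are immediate from monotonicity of the operations.'' They are not. Monotonicity gives $\infty\cdot(0,y)\wgeq 1\cdot(0,y)=(0,y)$ and nothing more; it cannot decide whether $\infty\cdot(0,y)$ equals $(0,y)$ or $\infty$. This is not merely a missing detail --- the two cases actually get different answers: in $W(\Bool)$ one has $\infty\cdot(0,y)=(0,y)$, whereas in $\Wsch(\Bool)$ one has $\infty\cdot(0,y)=\infty$ for $y\geq 1$. A single monotonicity argument obviously cannot yield both. The paper's proof of Proposition~(\ref{pro:W(Bool)-description})(\ref{part:W(Bool)-infty}) computes $\infty\cdot(0,y)$ in $W(\Bool)$ via $\Delta^\times(\psi_r)=\psi_r\tn\psi_r$ (so $r^1$ lies in the kernel exactly when $r>y$), and Proposition~(\ref{pro:Wsch(Bool)-arithmetic})(3) handles $\Wsch$ by reducing, via distributivity, to $z\in\{1,\anti{1},\infty\}$ and using that multiplication by $\anti{1}$ is an involution preserving $\Wsch(\Bool)\setminus\{\infty\}$. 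You would need an argument of this kind. A second, smaller point of comparison: for $(0,y)\cdot(0,y')=(yy',0)$ in $\Wsch$ you propose a Kronecker-coefficient/product-alphabet argument, whereas the paper avoids this entirely by first proving $\sker(x+\anti{1}y)=I_{(x,y)}$, so that $\Wsch(\Bool)\setminus\{\infty\}\cong\bN[\eta]/(\eta^2=1)$ as an $\bN$-algebra and the Schur multiplication law is then a formal computation $(y\eta)(y'\eta)=yy'$; the latter route is simpler and is worth knowing. Finally, you yourself flag that step (ii) and its Schur analogue (the combinatorics establishing that two incomparable rectangles cannot both be minimal in a prime $Z$, and the LR-fillability of the explicit constituent witnessing primality of $I_{(x,y)}$) remain to be written out; those are indeed exactly the places where the paper invests the careful tableau work of Propositions~(\ref{pro:W(Bool)-comblem3}), (\ref{pro:Wsch(Bool)-comblem3}) and (\ref{pro:Wsch(Bool)-comblem4}).
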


The proof is in sections \ref{sect:Wbool}--\ref{sec:Wsch-Bool}. We also describe the $\Lambda_\bN$-semiring structure on $W(\Bool)$ and the $\Sch$-semiring structure on $\Wsch(\Bool)$. Observe that both kinds of Boolean Witt vectors form countably infinite sets. For rings, this never happens. 

At this point, it should be mentioned that Connes has also proposed a notion of Witt vectors with entries in
certain $\Bool$-algebras~\cite{Connes:Witt-characteristic-1}. See also
Connes--Consani~\cite{Connes-Consani:universal-thickening}. His construction is an analogue for $p=1$ of the
$p$-typical Witt vector ring $\Wp(K)$ with entries in a perfect field $K$ of characteristic $p$. Both the
characteristic of the input ring $K$ and the typicality of the Witt construction are imagined to be $1$. In
these terms, our Boolean Witt vectors have only the characteristic of the input set to $1$. The Witt vector
functors themselves are the usual big (and, below, $p$-typical) functors, although extended to semirings. It
would be interesting to know if there are any relations between the two approaches.

In sections 5--6, we use the combinatorial description of Boolean Witt vectors above to determine $\Wsch(\bN)$.
Our approach is by analyzing the diagram
	\begin{equation}
	\label{diag:main-power-series}
	\begin{split}
		\xymatrix{
		1+t\bZ[[t]] \ar[r] & 1+t\bRpl[[t]] \ar[r] & 1+t\Bool[[t]] \\
		\Wsch(\bN) \ar[r]\ar@{>->}^{\srp}[u] & \Wsch(\bRpl)\ar[r]\ar@{>->}^{\srp}[u] & \Wsch(\Bool) \ar^{\srp}[u]
		}
	\end{split}
	\end{equation}
where $\bRpl=\setof{x\in\bR}{x\geq 0}$. The maps $\srp$ send any Witt vector $a$ to the series $\sum_{n\geq 0} a(e_n)t^n$, where 
	$$
	e_n=\sum_{i_1<\dots< i_n} x_{i_1}\cdots x_{i_n} \in \Sch
	$$
is the $n$-th elementary symmetric function, and the horizontal maps are the functorial ones.  (The semiring map $\bRpl\to\Bool$ is the unique one. It sends $0$ to $0$ and all $x>0$ to $1$.) The proof is a short argument linking the combinatorial description of $\Wsch(\Bool)$ given above with an analytical description of the real column in~(\ref{diag:main-power-series}) given by the Edrei--Thoma theorem from the theory of total positivity, as recalled in section~\ref{sec:total-pos}. The result is the following:
\begin{theorem}
\label{thm:intro-A}
The map $\srp$ is a bijection from $\Wsch(\bN)$ to the set of integral series $f(t)\in 1+t\bZ[[t]]$ 
of the form
	$$
	f(t)=\frac{g(t)}{h(t)},
	$$
for some (necessarily unique) integral polynomials $g(t),h(t)\in 1+t\bZ[t]$ such that all the complex roots 
of $g(t)$ are negative real numbers and all those of $h(t)$ are positive real numbers. 
\end{theorem}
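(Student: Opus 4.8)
The plan is to chase the commutative diagram~(\ref{diag:main-power-series}) and combine three inputs: the combinatorial description of $\Wsch(\Bool)$ from Theorem~\ref{thm:intro-B}, the analytic classification of the middle column given by the Edrei--Thoma theorem, and the injectivity of the vertical arrows. Recall that the Edrei--Thoma theorem identifies the image of $\srp\colon\Wsch(\bRpl)\to 1+t\bRpl[[t]]$ with the set of power series of the form
	\begin{equation*}
	f(t)=e^{\gamma t}\,\frac{\prod_i(1+\beta_i t)}{\prod_j(1-\alpha_j t)},
	\end{equation*}
where $\gamma\geq 0$, the $\alpha_j,\beta_i\geq 0$, and $\sum_i\beta_i+\sum_j\alpha_j<\infty$; these are exactly the totally positive formal power series. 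So the strategy is: given $f\in 1+t\bZ[[t]]$, first show that $f$ lies in $\srp(\Wsch(\bN))$ if and only if it lies simultaneously in $1+t\bZ[[t]]$ and in the Edrei--Thoma set $\srp(\Wsch(\bRpl))$; then show that an \emph{integral} Edrei--Thoma series is forced to be a ratio $g(t)/h(t)$ of the stated special form.

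First I would establish that $\srp(\Wsch(\bN))=\srp(\Wsch(\bRpl))\cap\bigl(1+t\bZ[[t]]\bigr)$ inside $1+t\bZ[[t]]$. One inclusion is immediate from functoriality applied to $\bN\hookrightarrow\bRpl$ and $\bN\hookrightarrow\bZ$. For the reverse inclusion, suppose $a\in\Wsch(\bRpl)$ has $\srp(a)\in 1+t\bZ[[t]]$; I must produce $a'\in\Wsch(\bN)$ mapping to the same series. Here I would use that $\srp$ is injective on $\Wsch(\bZ)$ (the usual Witt vector / power series identification for rings, noting $\Sch$ generates $\Lambda_\bZ$) together with injectivity of $\Wsch(\bN)\to\Wsch(\bZ)$: an integral series determines a Witt vector $a'\in\Wsch(\bZ)$, and the point is to check $a'$ actually lands in $\Wsch(\bN)=\Hom_{\mathrm{semiring}}(\Sch,\bN)$, i.e.\ that $a'(s_\lambda)\in\bN$ for every partition $\lambda$. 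Since $a'$ and $a$ agree after composing with $\bZ\hookrightarrow\bC$ (both realize the integral series $f$), and $a$ has nonnegative \emph{real} values on all $s_\lambda$, the values $a'(s_\lambda)$ are simultaneously integers and nonnegative reals, hence in $\bN$. This is the conceptual heart of the ``$\bN=\bZ\cap\bRpl$'' mechanism and it is where the diagram really gets used.

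Next I would translate ``$f$ is an integral Edrei--Thoma series'' into the polynomial statement. Writing $f=e^{\gamma t}\prod_i(1+\beta_i t)/\prod_j(1-\alpha_j t)$ as above, I would argue that integrality of all Taylor coefficients forces $\gamma=0$ and forces the multisets $\{\beta_i\}$ and $\{\alpha_j\}$ to be finite. The cleanest route is via the Boolean column: by Theorem~\ref{thm:intro-B}\eqref{part:Wsch(Bool)-semiring} the image of $\Wsch(\Bool)$ in $1+t\Bool[[t]]$ consists exactly of the series $1, 1+t+\dots+t^{x}\!\pmod 2$-type truncations and the all-ones series $1+t+t^2+\cdots$, so the composite $\Wsch(\bN)\to\Wsch(\Bool)$ tells us that a series coming from $\Wsch(\bN)$ reduces mod the infinite prime to either a polynomial of the form $1+t+\cdots+t^n$ or to $1/(1-t)$. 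Combined with the exponential growth estimates on Taylor coefficients of $e^{\gamma t}$ and of an infinite product $\prod(1+\beta_i t)$ or $\prod 1/(1-\alpha_j t)$ with infinitely many nonzero terms, this rules out $\gamma>0$ and infinite multisets, leaving $f=g(t)/h(t)$ with $g(t)=\prod_{i}(1+\beta_i t)\in 1+t\bR[t]$ and $h(t)=\prod_j(1-\alpha_j t)\in 1+t\bR[t]$, i.e.\ $g$ has only negative real roots and $h$ only positive real roots. Clearing to lowest terms, $g$ and $h$ are coprime; since $f\in\bZ[[t]]$ and $g,h\in 1+t\bR[t]$ are coprime with $g=fh$, a Gauss-lemma / resultant argument forces $g,h\in 1+t\bZ[t]$, and uniqueness of $(g,h)$ follows from coprimality.

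The main obstacle I anticipate is the quantitative step: showing that an infinite Edrei--Thoma product, or a genuine exponential factor, cannot have all-integer coefficients. Passing to the Boolean reduction gives the right qualitative dichotomy (eventually-zero versus all-ones coefficients), but one still needs an honest estimate---e.g.\ that if infinitely many $\beta_i$ are nonzero then the coefficients of $\prod(1+\beta_i t)$ either fail to be integers or grow/shrink in a way incompatible with the Boolean picture, and similarly that $e^{\gamma t}$ with $\gamma>0$ has non-integer coefficients $\gamma^n/n!$ unless $\gamma=0$. Making these estimates interact correctly with the possibility of cancellation between numerator and denominator (so that one cannot simply look at a single product in isolation) is the delicate part; I would handle it by first using the Boolean reduction to separate the ``polynomial'' case from the ``$1/(1-t)$'' case and then analyzing each, rather than trying to bound the coefficients of $f$ directly.
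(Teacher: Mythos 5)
Your big picture is right — chase the diagram~(\ref{diag:main-power-series}), use Edrei--Thoma for the middle column, and use the Boolean column to force rationality — but there is a real gap at the decisive step, and it stems from working with the wrong invariant.

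You propose to study the image of $\Wsch(\bN)$ in $1+t\Bool[[t]]$ under $\srp$. But $\srp\colon\Wsch(\Bool)\to 1+t\Bool[[t]]$ is very far from injective: computing $\srp(x,y)=\sum_n (x,y)(s_{1^n})t^n$ against the classification in Theorem~\ref{thm:intro-B}\eqref{part:Wsch(Bool)-semiring} shows that $\srp(x,y)=1+t+\cdots+t^x$ if $y=0$, while $\srp(x,y)=1/(1-t)=\srp(\infty)$ for every $y\geq 1$. So the dichotomy ``polynomial $1+\cdots+t^n$ versus $1/(1-t)$'' that you extract is essentially vacuous, and in particular it cannot distinguish $\infty$ from the finite elements $(x,y)$ with $y\geq 1$. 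That distinction is exactly what controls whether the corresponding Edrei--Thoma series is a ratio of two polynomials or not (this is~(\ref{thm:intro-C})\eqref{part:disc-inv}: under the map $\Wsch(\bRpl)\to\Wsch(\Bool)$, a series $g(t)/h(t)$ in lowest terms lands on $(\deg g,\deg h)$ and every non-rational Edrei--Thoma series lands on $\infty$). You then acknowledge that the remaining ``exponential growth estimates'' step is delicate and leave it open; as you suspect, a direct coefficient estimate on a possibly cancelling quotient of infinite products is hard to make rigorous.

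The paper's route sidesteps all of this. It works with the element of $\Wsch(\Bool)$ itself (not its $\srp$-image), and the only thing it needs to prove is that the image of any $a\in\Wsch(\bN)$ in $\Wsch(\Bool)$ is not $\infty$, i.e.\ that $a(s_\lambda)=0$ for some $\lambda$. This is exactly Lemma~(\ref{lem:goes-extinct}): for any $a\in\Wsch(\bRpl)$, $\min_{\lambda\vdash k}a(s_\lambda)\to 0$ as $k\to\infty$, which follows from the Schur--Weyl decomposition $s_1^k=\sum_{\lambda\vdash k}\dim(M_\lambda)\,s_\lambda$ and Cauchy--Schwarz giving $a(s_1)^k\geq c_k\sqrt{k!}$. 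Since $a\in\Wsch(\bN)$ has values in $\bN$, a minimum tending to $0$ must eventually be $0$. Once you have $\varphi^{\mathrm{Sch}}(a)\neq\infty$, the explicit description~(\ref{thm:intro-C})\eqref{part:disc-inv} of the map $\Wsch(\bRpl)\to\Wsch(\Bool)$ (together with Edrei--Thoma) hands you the rational form $g(t)/h(t)$ with the required root structure, and your closing Gauss-lemma/Galois argument for integrality of $g$ and $h$ then matches the paper. So the missing idea is precisely Lemma~(\ref{lem:goes-extinct}), and the place your argument goes wrong is in collapsing $\Wsch(\Bool)$ through $\srp$ before using it.
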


This theorem is obviously equivalent to the following one, which was apparently first observed by Davydov~\cite{Davydov:totally-positive-sequences} (end of the proof of theorem 1) and deserves to be more widely known:
\begin{theorem}
\label{thm:Davydov}
An integral series $f(t)\in 1+t\bZ[[t]]$ is totally positive if and only if it is
of the form 
	$$
	f(t)=\frac{g(t)}{h(t)},
	$$
for some integral polynomials $g(t),h(t)\in 1+t\bZ[t]$ such that all the complex roots 
of $g(t)$ are negative real numbers and all those of $h(t)$ are positive real numbers. 
\end{theorem}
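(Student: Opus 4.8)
The plan is to deduce the theorem from Theorem~\ref{thm:intro-A} together with the real-variable Edrei--Thoma theorem recalled in Section~\ref{sec:total-pos}. The latter identifies the image of $\srp\colon\Wsch(\bRpl)\to 1+t\bRpl[[t]]$ with the set of totally positive power series over $\bRpl$, and describes those as the series $e^{\gamma t}\prod_i(1+\alpha_i t)/\prod_j(1-\beta_j t)$ with $\gamma\ge0$, all $\alpha_i,\beta_j\ge0$, and $\gamma+\sum_i\alpha_i+\sum_j\beta_j<\infty$. Since Theorem~\ref{thm:intro-A} identifies $\srp(\Wsch(\bN))$ with exactly the set of quotients $g/h$ appearing in the statement, the whole theorem reduces to the single equality ``$\srp(\Wsch(\bN))$ equals the set of totally positive series in $1+t\bZ[[t]]$'', both formulations then being transcriptions of this through Theorem~\ref{thm:intro-A}. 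I would prove this equality by two inclusions.

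The inclusion ``$\subseteq$'' is easy: for $a\in\Wsch(\bN)$ the series $\srp(a)=\sum_n a(e_n)t^n$ has constant term $1$ and nonnegative integer coefficients, so it lies in $1+t\bZ[[t]]$, and viewing $a$ inside $\Wsch(\bRpl)$ through the injection induced by $\bN\hookrightarrow\bRpl$, Edrei--Thoma shows $\srp(a)$ is totally positive. (This also supplies the ``if'' half of the stated theorem, since a quotient $g/h$ of the prescribed shape is literally an Edrei--Thoma series with $\gamma=0$ and finitely many nonzero $\alpha_i,\beta_j$, namely the negative reciprocals of the roots of $g$ and the reciprocals of the roots of $h$.)

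The substantive inclusion is ``$\supseteq$''. Let $f\in1+t\bZ[[t]]$ be totally positive; its coefficients are $1\times1$ minors of its Toeplitz matrix, hence nonnegative, so $f\in1+t\bRpl[[t]]$, and by Edrei--Thoma $f=\srp(a)$ for some $a\in\Wsch(\bRpl)$, which unwinds to $a(e_n)=[t^n]f\in\bN$ for all $n$. The point is then to upgrade this to $a(s_\lambda)\in\bN$ for every partition $\lambda$, i.e.\ to $a\in\Wsch(\bN)$. For this: $\Sch=\bigoplus_\lambda\bN s_\lambda$ is a free commutative monoid under addition with group completion $\Lambda_\bZ=\bigoplus_\lambda\bZ s_\lambda$, so the additive map $a\colon\Sch\to\bRpl\subseteq\bR$ extends uniquely to an additive map $\tilde a\colon\Lambda_\bZ\to\bR$; writing an element of $\Lambda_\bZ$ as a difference of Schur-positive elements and using that products of Schur-positive elements remain Schur-positive (Littlewood--Richardson, \cite{Macdonald:SF}), a one-line bilinearity computation shows $\tilde a$ is a ring homomorphism. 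Since $\Lambda_\bZ=\bZ[e_1,e_2,\dots]$ and $\tilde a(e_n)=a(e_n)\in\bZ$, we get $\tilde a(\Lambda_\bZ)\subseteq\bZ$, hence $a(s_\lambda)=\tilde a(s_\lambda)\in\bZ\cap\bRpl=\bN$ for all $\lambda$. Thus $a\in\Wsch(\bN)$ with $\srp(a)=f$, and Theorem~\ref{thm:intro-A} finishes.

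The genuinely analytic ingredient, and the step I expect to be the main obstacle, is pinning down the correct form of Edrei--Thoma---that $\srp$ carries $\Wsch(\bRpl)$ bijectively onto the totally positive series over $\bRpl$, in both directions; the algebraic upgrade above is routine once one notices that Schur-positivity of products lets an $\bRpl$-valued semiring homomorphism on $\Sch$ be completed to an $\bR$-valued ring homomorphism on $\Lambda_\bZ$. The real weight of the theorem is of course carried by Theorem~\ref{thm:intro-A}, whose proof (Sections~5--6) is where the combinatorial description of $\Wsch(\Bool)$ from Theorem~\ref{thm:intro-B} enters: chasing an integral totally positive series down the three columns of~(\ref{diag:main-power-series}), the Boolean column forces $\gamma=0$ and the finiteness of both products---in effect its reduction modulo positivity must be one of $1+t+\dots+t^n$ or $\sum_{k\ge0}t^k$, and the asymptotics $a(e_n)\to0$ when infinitely many $\alpha_i$ are nonzero, and the factor $\gamma^{|\lambda|}/|\lambda|!$ appearing when $\gamma>0$, exclude the infinite cases---after which Fatou's lemma on rational power series with integer coefficients, together with uniqueness of the reduced rational form, forces $g,h\in1+t\bZ[t]$.
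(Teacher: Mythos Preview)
Your proof is correct and is the paper's approach: the paper declares Theorem~\ref{thm:Davydov} ``obviously equivalent'' to Theorem~\ref{thm:intro-A}, and you spell out that equivalence via the bijection $\srp\colon\Wsch(\bN)\to\{\text{totally positive series in }1+t\bZ[[t]]\}$ already established in Section~\ref{sec:total-pos} with $B=\bZ$. One caveat on your closing sketch of how Theorem~\ref{thm:intro-A} itself is proved: the ad hoc asymptotics you describe (``$a(e_n)\to 0$'' when infinitely many $\alpha_i$ are nonzero, the $\gamma^{|\lambda|}/|\lambda|!$ factor) do not cover all the non-rational Edrei--Thoma forms, and the paper's actual route is different---it uses Lemma~\ref{lem:goes-extinct}, the Schur--Weyl estimate $\min_{\lambda\vdash k}a(s_\lambda)\leq a(s_1)^k/\sqrt{k!}$, to force some $a(s_\lambda)=0$ and hence a non-$\infty$ image in $\Wsch(\Bool)$, after which Proposition~\ref{thm:intro-C}(\ref{part:disc-inv}) and the Fatou-type integrality argument finish.
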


Davydov's argument also uses the Edrei--Thoma theorem, but where we use the combinatorial calculation of $\Wsch(\Bool)$, he uses a theorem of Salem's~\cite{Salem:integral-coefficients}.

Let us now return to theorem~(\ref{thm:intro-B}). Observe that $W(\Bool)$ and $\Wsch(\Bool)$ are isomorphic as partially ordered sets, since both agree with $\bN^2\cup\{\infty\}$, but that they are not isomorphic as semirings. The reason they both agree with $\bN^2\cup\{\infty\}$ as sets is ultimately that nonzero rectangular partitions, which are indexed by $\bN^2$, have a certain primality role in the combinatorics of both monomial and Schur symmetric functions. In fact, most of the arguments for $W(\Bool)$ and for $\Wsch(\Bool)$ are remarkably parallel. It would be interesting to have a conceptual reason why this should be so.

This surprising coincidence continues with the $p$-typical Witt vectors,
which are the subject of section~\ref{sec:p-typical}. Let $p$ be a prime number,
and let $\Wp$ denote the $p$-typical Witt functor for semirings, as defined in~\cite{Borger:totalpos}
and recalled in~(\ref{subsec:p-typical}) below.

\begin{theorem}
$\Wp(\Bool)$ is canonically isomorphic to $\bN^2\cup\{\infty\}$ as a partially ordered set. With respect to
this identification, the semiring structure on $\Wp(\Bool)$ is independent of $p$, for $p>2$. 
\end{theorem}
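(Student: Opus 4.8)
The plan is to run the same machine that yields Theorem~\ref{thm:intro-B}, now with the $p$-typical symmetric semiring $\Lambda_{\bN,(p)}$ in place of $\Lambda_\bN$. Since $\Wp(\Bool)=\Hom_{\mathrm{semiring}}(\Lambda_{\bN,(p)},\Bool)$ by definition, the first step is to turn this $\Hom$-set into a combinatorial object exactly as in sections~\ref{sect:Wbool}--\ref{sec:Wsch-Bool}: because $\Bool=\{0,1\}$ with $1+1=1$, a semiring map $\varphi\colon\Lambda_{\bN,(p)}\to\Bool$ is determined by the set $Z_\varphi$ of those elements of the distinguished $\bN$-basis of $\Lambda_{\bN,(p)}$ (the basis recalled in~(\ref{subsec:p-typical})) that it kills, and $\varphi\mapsto Z_\varphi$ identifies $\Wp(\Bool)$ with the family of subsets $Z$ of the index set that are ``prime'': $Z$ is an ideal for the basis multiplication table (if $b\in Z$, then every basis element occurring in any product $b\cdot b'$ lies in $Z$), its complement is closed under multiplication, and $Z$ omits the unit. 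The addition law of $\Bool$ imposes nothing further, since basis expansions in $\Lambda_{\bN,(p)}$ have nonnegative coefficients; so the only genuinely new input is the multiplicative combinatorics of $\Lambda_{\bN,(p)}$.

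That combinatorics is the heart of the matter. As in the two cases already treated, I expect a distinguished family of ``rectangular'' basis elements to carry the primality, and the key lemma to prove is that these are parametrised by a pair of natural numbers --- a ``height'' and a ``$p$-adic width'' --- so that the admissible prime sets $Z$ are exactly the empty set (yielding $\infty$) together with the principal ideals cut out by a single rectangular element. This gives the bijection $\bN^2\cup\{\infty\}\longisomap\Wp(\Bool)$, and, since the parametrisation and the induced pointwise order are visibly uniform in $p$, the partially-ordered-set statement falls out for every prime $p$ at once. One can also reach this point more cheaply by exploiting the natural (and, one checks, surjective) map $W(\Bool)\to\Wp(\Bool)$ coming from the inclusion of birings $\Lambda_{\bN,(p)}\hookrightarrow\Lambda_\bN$: it presents $\Wp(\Bool)$ as an explicit quotient of the known poset $\bN^2\cup\{\infty\}$, namely the one identifying two forbidden rectangles once they have the same height and their widths round up to the same power of $p$.

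It then remains to determine the semiring structure and to establish the $p$-independence. Here I would compute Witt addition and Witt multiplication on the labels in $\bN^2\cup\{\infty\}$ directly from the co-addition and co-multiplication of $\Lambda_{\bN,(p)}$, evaluated on the rectangular basis elements; as in Theorem~\ref{thm:intro-B}(2)--(3), this reduces to finitely many structure constants living in low degrees, from which one reads off explicit addition and multiplication tables. The main obstacle is proving that these structure constants take the same values for all $p>2$. The point should be that the relevant products and coproducts of ``$p$-rectangles'' are computed without any ``carrying'' in base $p$ once $p\geq 3$ --- concretely, a sum of two $p$-powers is again a $p$-power only via the coincidence $p^{i}+p^{i}=p^{i+1}$, which is available solely for $p=2$ --- so that the monomial-symmetric-function bookkeeping becomes insensitive to $p$ in the stable range. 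This also explains why $p=2$ must be excluded: there $2$ is simultaneously the prime and the first nontrivial power of $p$, which already forces $\Lambda_{\bN,(2)}$ to be strictly larger, in degree~$2$, than the stable semiring $\Lambda_{\bN,(p)}$ for $p\geq 3$ (for instance it contains the extra element $e_2$), and this low-degree discrepancy is exactly what makes the $p=2$ multiplication genuinely different. Pinning down this stabilisation --- i.e.\ checking that nothing analogous disrupts the structure constants for $p\geq 3$ --- is where the real work lies.
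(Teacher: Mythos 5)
Your strategy diverges from the paper's in a way that, as written, cannot be carried out, and it also contains a false intermediate claim. Here are the concrete problems.

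First, the setup. You want to classify semiring maps $\Lambda_{\bN,(p)}\to\Bool$ by their ``partition kernel'' with respect to a distinguished $\bN$-basis of $\Lambda_{\bN,(p)}$, ``the basis recalled in~(\ref{subsec:p-typical})''. But~(\ref{subsec:p-typical}) does not furnish an $\bN$-basis; it only gives the \emph{generators} $d_{i,j}$ together with the relations $d_{i,j}^p=p\,d_{i+1,j}+d_{i,j+1}$. The whole prime-ideal machinery of sections~\ref{sect:Wbool}--\ref{sec:Wsch-Bool} leans on having an explicit basis (monomial or Schur) so that linear maps to $\Bool$ are subsets of the index set; that crutch is simply not present here. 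What the paper uses instead is the presentation~(\ref{eq:p-typical-W}) itself: a Witt vector is a sequence $(a_{i,j})\in A^{\bN^2}$ with $a_{i,j}^p=p\,a_{i+1,j}+a_{i,j+1}$. Over $\Bool$, since $a^p=a$ and $pa=a$, this collapses to $a_{i,j}=0 \Leftrightarrow a_{i+1,j}=a_{i,j+1}=0$. Any such sequence with some zero has a unique ``corner'' $(x,y)$ with $x+y$ minimal among zeros, and the sequence is determined by it; the all-ones sequence is $\infty$. That is the bijection $\bN^2\cup\{\infty\}\to\Wp(\Bool)$, and no prime-ideal analysis is needed. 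So the route you propose is both heavier and, in its current form, unavailable.

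Second, your alternative shortcut is based on a false premise. You assert that the natural map $W(\Bool)\to\Wp(\Bool)$ is surjective and exhibits $\Wp(\Bool)$ as a quotient of the known poset. In fact the paper's last proposition of section~\ref{sec:p-typical} shows the image of $(x,y)$ lands in $\{(0,\cdot),(1,\cdot)\}\cup\{\infty\}$ for $p\geq 3$ (and in $\{(0,\cdot),(1,\cdot),(2,\cdot)\}\cup\{\infty\}$ for $p=2$), with everything else going to $\infty$. In particular, $(2,0),(3,0),\dots$ are \emph{not} in the image when $p\geq 3$. So there is no such quotient description, and this part of the argument collapses.

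Third, the $p$-independence. Your heuristic about ``no carrying in base $p$'' does not by itself pin down where the difference with $p=2$ actually appears, and the remark about $e_2$ being an ``extra element'' of $\Lambda_{\bN,(2)}$ is not the operative fact: the semiring structure on $\Wp(\Bool)$ depends on $\Lambda_{\bN,(p)}$ intrinsically, not on how it sits inside $\Lambda_\bN$. The paper's actual mechanism is elementary and concrete: the image of $n\in\bN$ under $\bN\to\Wp(\Bool)$ is governed by iterating $d(n)=(n^p-n)/p$, and for $p\geq 3$ one has $d(0)=d(1)=0$ and $d(m)\geq 2$ for $m\geq 2$, while for $p=2$ the exceptional value $d(2)=1$ shifts the cutoff. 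This is the explicit low-degree discrepancy; all the remaining arithmetic identities in~(\ref{prop:p-typical}) are then proved uniformly using the $\Lambda_{\bN,(p)}$-action~(\ref{eq:d-psi-translations}), the co-addition/co-multiplication formulas for $d$ and $\psi$, and the preorder comparison~(\ref{pro:inequality-comparison}). If you want to resurrect your plan, you should drop the basis/prime-ideal framing, replace it with the direct analysis of the sequence relation over $\Bool$, delete the surjectivity shortcut, and replace the ``no carrying'' slogan with the concrete computation of $d(n)$ for small $n$.
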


Here the connection with partitions is lost. So perhaps it is unreasonable to expect the coincidence in
this case to have a deeper meaning. The semiring structure on $\Wp(\Bool)$ is described explicitly in~(\ref{prop:p-typical}) below. It is not isomorphic to either $W(\Bool)$ or $\Wsch(\Bool)$.

In section~\ref{sec:countability}, we establish some countability results for Witt vectors of other semirings, such as $\bN[x]$ and the truncations $\bN/(n=n+1)$ of $\bN$. The final surprise of this paper is that for the truncations, there is a jump at $n=3$:

\begin{theorem}
$W(\bN/(n=n+1))$ is countable if and only if $n\leq 2$. The same is true for $\Wsch(\bN/(n=n+1))$.
\end{theorem}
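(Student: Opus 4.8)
The plan is to treat the two directions separately. Write $B_n\dfn\bN/(n=n+1)$; concretely $B_n=\{0,1,\dots,n\}$ with addition and multiplication computed in $\bN$ and then truncated above $n$, so that $n$ is an absorbing element for $+$ (against everything) and for $\cdot$ (against everything nonzero). Since $B_0$ is the zero semiring and $B_1=\Bool$, the ``if'' direction reduces to the countability of $W(B_2)$ and $\Wsch(B_2)$ (the cases $n=0$ and $n=1$ being trivial and covered by Theorem~\ref{thm:intro-B} respectively), while the ``only if'' direction is the assertion that $W(B_n)$ and $\Wsch(B_n)$ are uncountable for every $n\ge3$.

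For the ``only if'' direction the idea is to pass to symmetric functions in two variables. Setting $x_3=x_4=\dots=0$ gives surjective semiring homomorphisms $\Lambda_\bN\twoheadrightarrow\Lambda^{(2)}$ and $\Sch\twoheadrightarrow{\Sch}^{(2)}$, where $\Lambda^{(2)}$ (resp.\ ${\Sch}^{(2)}$) is the $\bN$-span of the images of the $m_\lambda$ (resp.\ $s_\lambda$) with at most two rows; precomposing with these surjections embeds $\Hom(\Lambda^{(2)},B_n)$ into $W(B_n)$ and $\Hom({\Sch}^{(2)},B_n)$ into $\Wsch(B_n)$, so it suffices to produce uncountably many homomorphisms out of $\Lambda^{(2)}$ and ${\Sch}^{(2)}$. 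Writing $p_k=x_1^k+x_2^k$, the semiring $\Lambda^{(2)}$ has $\bN$-basis $\{e_2^{\,d}:d\ge0\}\cup\{e_2^{\,d}p_k:d\ge0,\ k\ge1\}$, and its multiplication is pinned down by the two-variable identities $p_jp_k=p_{j+k}+e_2^{\,\min(j,k)}p_{|j-k|}$ (for $j\ne k$) and $p_j^{\,2}=p_{2j}+2e_2^{\,j}$. Now let $S_n\dfn\{c\in B_n:c^2\ge n\text{ in }\bN\}$. For any sequence $(v_k)_{k\ge1}$ of elements of $S_n$, define $\phi\colon\Lambda^{(2)}\to B_n$ on the basis by $\phi(1)=1$, $\phi(p_k)=v_k$, and $\phi(e_2^{\,d})=\phi(e_2^{\,d}p_k)=n$ whenever $d\ge1$, extended additively. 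Using the displayed identities one checks that $\phi$ is multiplicative on pairs of basis elements---any product involving a factor of positive $e_2$-degree evaluates to $n$ because $n$ is absorbing, while $v_jv_k\ge(\min S_n)^2\ge n$ gives $\phi(p_j)\phi(p_k)=n=\phi(p_jp_k)$---so $\phi$ is a semiring homomorphism, and distinct sequences $(v_k)_k$ give distinct $\phi$. Now $n\in S_n$ always, while $n-1\in S_n$ if and only if $(n-1)^2\ge n$, i.e.\ if and only if $n\ge3$; thus $|S_n|\ge2$ exactly when $n\ge3$, and for such $n$ we obtain an injection of $S_n^{\bZ_{>0}}$, a set of cardinality $2^{\aleph_0}$, into $W(B_n)$. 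The same construction applied to ${\Sch}^{(2)}$---which has $\bN$-basis $\{e_2^{\,d}h_k:d\ge0,\ k\ge0\}$ and multiplication governed by $h_jh_k=\sum_{i=0}^{\min(j,k)}e_2^{\,i}h_{j+k-2i}$, so that one sets $\phi(e_2^{\,d}h_k)=n$ for $d\ge1$ and $\phi(h_k)\in S_n$ freely for $k\ge1$---shows that $\Wsch(B_n)$ is uncountable for every $n\ge3$.

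For the ``if'' direction only the case $n=2$ requires work, and the plan is to carry the computation of $W(\Bool)$ from sections~\ref{sect:Wbool}--\ref{sec:Wsch-Bool} one layer further. Composing a homomorphism $\phi\colon\Lambda_\bN\to B_2$ with the quotient map $B_2\to\Bool$ (which identifies $1$ and $2$ and has $\{0\}$ as its fiber over $0$) yields a Boolean Witt vector, hence a point of the countable set $\bN^2\cup\{\infty\}$, and this point records exactly the ``zero locus'' $\{\lambda:\phi(m_\lambda)=0\}$. It then remains to show that the ``value-$2$ locus'' $\{\lambda:\phi(m_\lambda)=2\}$ can be only one of countably many sets. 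The decisive fact is that $1+1=2\ne1$ in $B_2$: since in $m_\lambda m_\mu$ the basis vector $m_{\lambda\cup\mu}$ appears with coefficient $\prod_i\binom{m_i(\lambda)+m_i(\mu)}{m_i(\lambda)}$, which is $\ge2$ whenever $\lambda$ and $\mu$ have a common part, while all other coefficients are nonnegative, one gets constraints such as $\phi(m_\lambda)=1\Rightarrow\phi(m_{\lambda\cup\lambda})=0$; propagating these exactly as in the $\Bool$ computation (where they reduce the possibilities to the rectangle data $\bN^2\cup\{\infty\}$) pins the value-$2$ locus down to countably many possibilities, so that $W(B_2)$, and likewise $\Wsch(B_2)$ with $s_\lambda$ in place of $m_\lambda$, is countable. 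I expect this second-layer bookkeeping---determining exactly which loci occur---to be the main obstacle, since already the first layer, the determination of $W(\Bool)$ and $\Wsch(\Bool)$, is the technical heart of the paper.
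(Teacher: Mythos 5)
Your uncountability argument for $n\ge3$ is correct and takes a genuinely different route from the paper. The paper works directly with all partitions: for any subset $U$ of nonzero partitions it defines $f(m_0)=1$, $f(m_\lambda)=n-1$ for $\lambda\in U$, $f(m_\lambda)=n$ for nonzero $\lambda\notin U$, and checks multiplicativity using the fact (Proposition~\ref{pro:W(Bool)-lr-extremals}(3)) that $m_\lambda m_\mu$ has at least two distinct nonzero constituents, so both $f(m_\lambda m_\mu)$ and $f(m_\lambda)f(m_\mu)$ saturate to $n$ once $n\ge3$. You instead specialize to two variables, which collapses the product structure to a one-parameter family of relations among the $p_k$ and lets you assign $\phi(p_k)\in S_n$ freely. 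Both arguments hinge on the same arithmetic observation --- that $(n-1)^2\ge n$ and $(n-1)+(n-1)\ge n$ precisely when $n\ge3$ --- but yours is more concrete, with an explicit basis and multiplication table, while the paper's avoids a change of basis and applies verbatim in the Schur case. Your approach works, and the Schur version goes through as you indicate via the Pieri-type identity for $h_jh_k$ in two variables.

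The countability direction at $n=2$ is where your proposal has a real gap, which you essentially acknowledge. You correctly identify the key structural fact (every element of $B_2$ is multiplicatively idempotent, so $a(m_\lambda)=a(m_\lambda)^2=a(m_\lambda^2)$, which forces $a(m_\lambda)=2$ whenever enough constituents of $m_\lambda^2$ survive outside the zero locus) and the right reduction (bound the fiber of $W(B_2)\to W(\Bool)$ over each $z$). But ``propagating these exactly as in the $\Bool$ computation'' is not an argument, and the analogy is imperfect: over a point $z=(x,y)\in W(\Bool)$ you must control a genuine $\{1,2\}$-valued function $\lambda\mapsto a(m_\lambda)$ on the complement of $I_{(x,y)}$, and the idempotence constraint alone does not obviously cut this down to finitely many possibilities. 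The paper's finishing argument is a case analysis showing $a(m_\lambda)$ is forced for all $\lambda$ except the rectangles $u^x$ with $u\le 2y+1$; it requires, beyond the idempotence bound, the separate observation that for $p,q\ge y+1$ the product $m_{p^x}m_{q^x}$ has a \emph{unique} constituent outside $I_{(x,y)}$, namely $(p+q)^x$, appearing with coefficient $1$, which yields the multiplicative recurrence $a(m_{p^x})a(m_{q^x})=a(m_{(p+q)^x})$ along the rectangles. That ingredient is not hinted at in your sketch, and without it one cannot rule out infinitely many free choices along the rectangular partitions. (You should also note that the paper itself omits the $n=2$ Schur case, describing the required modifications as ``more elaborate''; your claim that it follows ``likewise with $s_\lambda$ in place of $m_\lambda$'' is therefore also optimistic.)
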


It would be interesting to improve on such cardinality statements and determine the algebraic structure of more Witt vector algebras explicitly, as we have done for Boolean Witt vectors. One could start with $\bN[x]$ or $\bN/(2=3)$, for example.

\section{Background and conventions}
\label{sec:background}

We will follow the conventions and notations of~\cite{Borger:totalpos}. Let us recall some
of the ones that have not already appeared in the introduction.

\subsection{Modules and algebras over $\bN$}
The set $\bN$ of natural numbers is $\{0,1,\dots\}$.
The category $\Mod{\bN}$ of $\bN$-modules is by definition the category of commutative monoids. The
monoid operation is denoted $+$, and the identity is $0$. The set of $\bN$-module homomorphisms
$M\to N$ will be denoted $\Mod{\bN}(M,N)$, and similarly for other categories.

An $\bN$-algebra (or a semiring) is an $\bN$-module $A$ with a second commutative monoid operation
$\times$ which is bilinear with respect to $+$, in the sense that for any $a\in A$, the map
$x\mapsto a\times x$ is an endomorphism of the monoid $(A,+)$. The identity of $\times$ is denoted
$1$, and $a\times x$ is typically written $ax$. The category of $\bN$-algebras is denoted
$\Alg{\bN}$. 

The Boolean semiring $\Bool$ is defined to be the quotient $\bN/(1+1=1)$, or simply $\{0,1\}$ with $1+1=1$.

\subsection{Partitions and symmetric functions}
We will generally follow Macdonald's book~\cite{Macdonald:SF}. In particular,
a partition $\lambda$ is a sequence
$(\lambda_1,\lambda_2,\dots)\in \bN^\infty$ such that $\lambda_1\geq\lambda_2\geq\cdots$
and $\lambda_i=0$ for sufficiently large $i$. The set of partitions will be denoted $P$.
We will write $a^b$ for the partition $(a,a,\dots,a,0,\dots)$ with $a$ occurring $b$ times.
The $n$-th power-sum symmetric function in $\Lambda_\bN$ will be denoted by $\psi_n$:
	$$
	\psi_n = \sum_i x_i^n
	$$
instead of the more common $p_n$.

\subsection{Witt vectors}
As mentioned in the introduction, $W(A)$ is an $\bN$-algebra for any $\bN$-algebra $A$.
The binary operations $+,\times$ on $W(A)$ are induced by co-operations on the representing 
object $\Lambda_\bN$:
	\begin{equation}
		\Delta^+, \Delta^\times\: \Lambda_\bN \longmap \Lambda_\bN\tn_\bN\Lambda_\bN.
	\end{equation}
In terms of symmetric functions in the formal variables $x_1,x_2,\dots$, they
are given by the following rules: 
	\begin{align*}
		\Delta^+: f(\dots, x_i,\dots) &\mapsto f(\dots,x_i\tn 1, 1\tn x_i,\dots) \\
		\Delta^\times: f(\dots,x_i,\dots) &\mapsto f(\dots,x_i\tn x_j,\dots).
	\end{align*}
For example, we have 
	\begin{align*}
		\Delta^+(\psi_n)&=\psi_n\tn 1 + 1\tn \psi_n, \\
		\Delta^\times(\psi_n)&=\psi_n\tn\psi_n.
	\end{align*}
These equations are the properties of $\Delta^+$ and $\Delta^\times$ we will most often use, and in fact
they determine $\Delta^+$ and $\Delta^\times$ uniquely and hence furnish alternative definitions.

Similarly, the additive and multiplicative units $0,1\in W(A)$ are induced by co-units
	\begin{equation}
		\eps^+, \eps^\times\: \Lambda_\bN \longmap \bN
	\end{equation}
given by
	\begin{align*}
		\eps^+: f(\dots, x_i,\dots) &\mapsto f(0,0,0,\dots) \\
		\eps^\times: f(\dots,x_i,\dots) &\mapsto f(1,0,0,\dots).
	\end{align*}
The same statements hold for $\Sch$ instead of $\Lambda_\bN$.
	
For any $x\in A$, the \emph{anti-Teichm\"uller} lift $\anti{x}\in\Wsch(A)$ is defined by
	\begin{equation}
	\label{eq:anti}
	\anti{x}:s_\lambda \mapsto 
		\begin{cases}
			x^r & \text{if $\lambda=1^r$} \\
			0 & \text{otherwise.}
		\end{cases}		
	\end{equation}
So for example, we have 
	\begin{equation}
		\srp(\{x\})=1+xt+x^2t^2+\cdots = (1-xt)^{-1}.
	\end{equation}  
See \cite{Borger:totalpos},~(6.10) for the basic properties of 
anti-Teichm\"uller lifts. (Note that
the symbol $\srp$ denotes a different map in~\cite{Borger:totalpos}, corresponding to different sign 
conventions. There two maps $\srp^+_+$ and $\srp^-_-$ 
are defined. Here $\srp$ means $\srp^+_+$, whereas there it means $\srp^-_-$.)

We end with some words on composition structures, which will have a small part in this paper.
Consider a composition $\bN$-algebra $Q$, such as $\Lambda_\bN$ or $\Sch$.
(See (3.3) in~\cite{Borger:totalpos} for the general definition.)  
Then the $Q$-Witt vectors
$W_Q(A)=\Alg{\bN}(Q,A)$ have not only an algebra structure but also an action of $Q$. For $f\in Q$, 
$a\in W_Q(A)$, the element $f(a)\in W_Q(A)$ is the map $Q\to A$ given by
	$$
	f(a)\: g\mapsto a(g\circ f),
	$$
where $\circ$ denotes the composition operation on $Q$---this 
is simply plethysm of symmetric functions in all examples in this paper. In fact, $W_Q$ is the right adjoint of 
the forgetful functor from $Q$-semirings to
semirings. This generalizes the fact that the big Witt functor is the right adjoint of the forgetful
functor from $\lambda$-rings to rings.

\section{Orders on Witt vectors}

This section introduces some generalities on pre-orders. They
have only a small part in this paper; so it can be skipped and consulted as needed.

\subsection{$\Alg{\bN}$-preorders}
Recall that a preorder is a reflexive transitive relation.
Let us say that a relation $\leq$ on an $\bN$-algebra $A$ is an $\Alg{\bN}$-preorder if
it is a preorder and satisfies the implication
	\begin{equation}
		a\leq b\text{ and } a'\leq b' \Longrightarrow a+a'\leq b+b'\text{ and } aa'\leq bb'.
	\end{equation}
This implication is equivalent to requiring that the graph of $\leq$ be a
sub-$\bN$-algebra of $A\times A$. Hence an $\Alg{\bN}$-preorder is the same as a preorder object
over $A$ in the category $\Alg{\bN}$.

The most important example for this paper is the \emph{difference preorder} defined by 
	\begin{equation}
		\label{eq:difference-preorder-definition}
		a \leq b \Longleftrightarrow \exists c\in A \text{ such that }b=a+c.
	\end{equation}
It is the strongest $\Alg{\bN}$-preorder such that $c\geq 0$ for all $c\in A$.
A second example is $a\leq b \Leftrightarrow a=b$; and a third is $a\leq b$ for all $a,b$,
which agrees with (\ref{eq:difference-preorder-definition}) when $A$ is a ring.

\subsection{Preorders on Witt vectors}
Let $Q$ be a composition $\bN$-algebra, for example $\Lambda_\bN$ or $\Sch$.
(See (3.3) in~\cite{Borger:totalpos} for the general definition.) 
Then given any preorder $\leq$ on an $\bN$-algebra $A$,
we can define a preorder on the algebra $W_Q(A)=\Hom_{\Alg{\bN}}(Q,A)$ of Witt 
vectors: for $a,b\in W_Q(A)$, we write 
	\begin{equation}
		\label{property:order-on-W}
		a\wleq b \Longleftrightarrow a(x)\leq b(x) \text{ for all } x\in Q.
	\end{equation}
Observe that if $\leq$
is a partial order (that is, it satisfies $a\leq b\leq a \Rightarrow a=b$), then so is $\wleq$.

Now assume that $\leq$ is an $\Alg{\bN}$-preorder. Then $\wleq$ is also an $\Alg{\bN}$-preorder. Indeed,
if $R\subseteq A\times A$ denotes the graph of $\leq$, then $W_Q(R)$ is a subalgebra of $W_Q(A\times A)
=W_Q(A)\times W_Q(A)$ and is clearly the graph of $\wleq$. In fact, $W_Q(R)$ is a sub-$Q$-semiring of
$W_Q(A)\times W_Q(A)$. So $\wleq$ is a preorder in the category of $Q$-semirings. Equivalently, it is an
$\Alg{\bN}$-preorder satisfying
	\begin{equation}
		\label{eq:P-preorder}
		a\wleq b \Longrightarrow f(a)\wleq f(b)
	\end{equation}
for any $f\in Q$.

Finally observe that if a subset $S\subseteq Q$ generates $Q$ as an $\bN$-algebra, then for all 
$a,b\in W_Q(A)$, we have
	\begin{equation}
		\label{eq:order-determined-by-generators}
		a \wleq b \Longleftrightarrow a(s) \leq b(s) \text{ for all } s\in S.
	\end{equation}
Indeed, since $R$ is a subalgebra of $A\times A$, the map $(a,b)\:Q\to A\times A$ factors through
$R$ if and only if the image of $S$ is contained in $R$. 

\subsection{The preorders in this paper}
The preorder $\leq$ on $\bN$-algebras will always be understood to be the difference
preorder of (\ref{eq:difference-preorder-definition}), and the preorder $\wleq$ on Witt vectors
$W_Q(A)$ will always be understood to be the one induced by the difference preorder on $A$.

The following result implies that for all the composition $\bN$-algebras $Q$ that appear in this
paper---namely $\Lambda_\bN$, $\Sch$, and $\Lambda_{\bN,(p)}$---the difference preorder $\leq$ on $W_Q(A)$
is stronger than $\wleq$.

\begin{proposition}\label{pro:inequality-comparison}
For $a,c\in W_Q(A)$, we have 
	\begin{equation}
	\label{eq:inequality-comparison}
	a\wleq a+c,
	\end{equation}
as long as $Q$ has the property that for all $x\in Q$, we have $\eps^+(x)\leq x$,
where $\eps^+\:Q\to\bN$ denotes the additive co-unit of $Q$. 
\end{proposition}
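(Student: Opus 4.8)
The plan is to reduce the inequality $a\wleq a+c$ to its special case at $a=0$, namely $0\wleq c$, and then to recover the general statement by adding $a$ to both sides. This last step is legitimate because, by the discussion in the preceding subsections (taking $\leq$ on $A$ to be the difference preorder, which is an $\Alg{\bN}$-preorder), the induced relation $\wleq$ on $W_Q(A)$ is again an $\Alg{\bN}$-preorder, hence compatible with $+$; combining $0\wleq c$ with the reflexivity instance $a\wleq a$ then yields $a=a+0\wleq a+c$.

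So the only thing left to prove is $0\wleq c$, where $0\in W_Q(A)$ denotes the additive unit, i.e.\ the composite $Q\longlabelmap{\eps^+}\bN\to A$. Unwinding the definition (\ref{property:order-on-W}) of $\wleq$, this means $0(x)\leq c(x)$ in $A$ for every $x\in Q$, with $\leq$ the difference preorder. Here is where the hypothesis enters: fixing $x\in Q$, the assumption $\eps^+(x)\leq x$ in $Q$ lets us write $x=\eps^+(x)+w$ for some $w\in Q$, where the natural number $\eps^+(x)$ is read inside $Q$ via the structure map $\bN\to Q$. Applying the semiring homomorphism $c\:Q\to A$, and using that it commutes with the structure maps out of $\bN$, I obtain $c(x)=\eps^+(x)+c(w)$ in $A$; since $0(x)$ is by construction precisely the image of the natural number $\eps^+(x)$ in $A$, this reads $c(x)=0(x)+c(w)$, which exhibits $0(x)\leq c(x)$ with witness $c(w)\in A$. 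As $x$ was arbitrary, $0\wleq c$.

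I do not anticipate a genuine obstacle here; the only delicate point is the bookkeeping around the canonical maps $\bN\to Q$ and $\bN\to A$, specifically the identification of $0(x)$ with the image of $\eps^+(x)$ in $A$, which is exactly what converts ``$\eps^+(x)\leq x$ in $Q$'' into ``$0(x)\leq c(x)$ in $A$'' upon applying $c$. A more computational variant would expand $(a+c)(x)=(a\tn c)(\Delta^+x)$ along a presentation $\Delta^+x=\sum_i y_i\tn z_i$, note that $a(x)=(a\tn 0)(\Delta^+x)=\sum_i a(y_i)\,0(z_i)$, and verify termwise that $c(z_i)-0(z_i)\in A$ by the same consequence of the hypothesis; but the reduction through $0\wleq c$ is cleaner and sidesteps choosing such a presentation.
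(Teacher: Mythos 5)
Your proof is correct, but it routes through a genuinely different organizing step than the paper does. You isolate the special case $0\wleq c$, establish it directly from the hypothesis (writing $x=\eps^+(x)+w$ in $Q$ and applying $c$), and then appeal to the fact — proved in the preceding subsection — that $\wleq$ is an $\Alg{\bN}$-preorder, so that $0\wleq c$ and $a\wleq a$ may be added to give $a\wleq a+c$. The paper instead works directly with the comultiplication: it uses the hypothesis to show, for any $z\in Q\tn_\bN Q$, that $z\geq(\eps^+\tn\id)(z)$, specializes to $z=\Delta^+(x)$ to obtain $\Delta^+(x)\geq 1\tn x$, and then applies the homomorphism $c\tn a$ to get $(c+a)(x)\geq a(x)$ for all $x$. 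Your approach is more modular: it cleanly separates the place where the hypothesis is used (the verification of $0\wleq c$) from the general structural fact that $\wleq$ respects addition, and it avoids choosing a presentation of $\Delta^+(x)$ as a sum of basic tensors. The paper's approach is more self-contained — it does not invoke the $\Alg{\bN}$-preorder property of $\wleq$ at all — and the intermediate inequality $\Delta^+(x)\geq 1\tn x$ it isolates is perhaps of independent interest. You in fact sketch the paper's computational variant in your final remark, so you saw both roads; the reduction through $0\wleq c$ is the cleaner of the two, and the bookkeeping around the structure maps $\bN\to Q$ and $\bN\to A$, which you flag as the delicate point, is handled correctly.
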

\begin{proof}
Given any element
$z= \sum_i x_i\tn y_i\in Q\tn_\bN Q$, write $x_i=\eps^+(x_i)+x'_i$, for some elements $x'_i\in Q$.
Then we have
$$z = \sum_i (\eps^+(x_i)+x'_i)\tn y_i = \sum_i\eps^+(x_i)\tn y_i + \sum_i x'_i\tn y_i
\geq (\eps^+\tn\id)(z).$$
When $z=\Delta^+(x)$
for some $x\in Q$, this simplifies to $\Delta^+(x)\geq 1\tn x$. 
Applying the homomorphism $c\tn a\: Q\tn_\bN Q \to A$ to this inequality, we obtain
$(c+a)(x)\geq a(x)$. Since this holds for all $x\in Q$, we have $c+a\wgeq a$.
\end{proof}

\section{$W(\Bool)$}
\label{sect:Wbool}

The purpose of this section is to determine $W(\Bool)$.

\subsection{Partition ideals}
Recall that $P$ denotes the set of partitions. Let us say that a subset $I\subseteq P$ is a 
\emph{partition ideal} if the following implication holds for all $\lambda,\mu\in P$:
	$$
	\lambda\in I, \lambda\pleq\mu \Longrightarrow \mu\in I.
	$$
Here, $\pleq$ stands for the inclusion ordering of partitions, i.e., two partitions $\lambda$ and
$\mu$ satisfy $\lambda\pleq\mu$ if and only if $\lambda_i\leq \mu_i$ for all $i$ (including those
for which $\mu_i=0$).

For any symmetric function $g\in\Lambda_\bN$, let us say the \emph{(monomial) constituents} of $g$
are the partitions $\nu$ such that $g_\nu\neq 0$ in the unique decomposition $g=\sum_\nu g_\nu
m_\nu$, $g_\nu\in\bN$, where the $m_\nu$ are the monomial symmetric functions. The reason for the
optional modifier \emph{monomial} here and below is that we will consider Schur analogues of these
concepts in the next section.

Let us say that a partition ideal $I$ is \emph{(monomial) prime} if $I\neq P$ and whenever
partitions $\lambda,\mu\in P$ have the property that every constituent of $m_\lambda m_\mu$ is in
$I$, then either $\lambda$ or $\mu$ lies in $I$.

For any linear map $f\in\Mod{\bN}(\Lambda_\bN,\Bool)$, write 
	$$
	\pker(f)=\setof{\lambda\in P}{f(m_\lambda)=0}.
	$$
We will call $\pker(f)$ the \emph{(monomial) partition kernel} of $f$.

Understanding $W(\Bool)$ hinges on knowing the constituents of the product of two monomial
symmetric functions. This is a combinatorial problem which is not very deep:

\begin{proposition}
\label{pro:W(Bool)-comblem1}
Let $\lambda$ and $\mu$ be two partitions. Then the constituents of the product $m_{\lambda}
m_{\mu}$ are the partitions $\nu$ which can be obtained by picking a permutation
$\sigma$ of the set $\left\lbrace 1,2,3,\dots\right\rbrace$ and rearranging the sequence
$$\left(\lambda_1 + \mu_{\sigma\left(1\right)}, \lambda_2 + \mu_{\sigma\left(2\right)}, \lambda_3 +
\mu_{\sigma\left(3\right)}, \dots\right)$$ in weakly decreasing order.
\end{proposition}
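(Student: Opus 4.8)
The plan is to reduce the statement to the well-known formula for the product of two monomial symmetric functions and then read off the constituents. Recall that for partitions $\lambda$ and $\mu$, the product $m_\lambda m_\mu$ expands in the monomial basis with nonnegative integer coefficients; in fact, each monomial $x^\alpha$ appearing in $m_\lambda m_\mu$ arises as a product of a monomial $x^\beta$ from $m_\lambda$ and a monomial $x^\gamma$ from $m_\mu$, where $\beta$ is a rearrangement of $\lambda$ (padded with zeros) and $\gamma$ is a rearrangement of $\mu$. Hence every exponent vector $\alpha$ occurring in $m_\lambda m_\mu$ is of the form $\alpha_i = \beta_i + \gamma_i$ where $(\beta_i)$ runs over the rearrangements of $(\lambda_1,\lambda_2,\dots)$ and $(\gamma_i)$ over the rearrangements of $(\mu_1,\mu_2,\dots)$. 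First I would make this precise: working in the polynomial ring in infinitely many variables, $m_\lambda = \sum_\beta x^\beta$ where $\beta$ ranges over the distinct rearrangements of the (infinite, eventually-zero) sequence $\lambda$, and similarly for $m_\mu$; multiplying out gives $m_\lambda m_\mu = \sum_{\beta,\gamma} x^{\beta+\gamma}$, and collecting terms, the monomials appearing are exactly those $x^\alpha$ with $\alpha$ a rearrangement of $\beta + \gamma$ for some such $\beta, \gamma$.

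Next I would translate ``a constituent of $m_\lambda m_\mu$'' into this language. A partition $\nu$ is a constituent of $m_\lambda m_\mu$ if and only if $m_\nu$ appears with nonzero coefficient, which (since all coefficients are nonnegative) happens exactly when the monomial $x^\nu$ (i.e.\ $x_1^{\nu_1} x_2^{\nu_2}\cdots$) appears in the product. By the previous paragraph this is equivalent to: there exist rearrangements $\beta$ of $\lambda$ and $\gamma$ of $\mu$ with $\beta_i + \gamma_i = \nu_i$ for all $i$. Fixing $\beta$ to be $\lambda$ itself (we are free to do this by symmetry of the sum, absorbing the rearrangement into $\gamma$), this says precisely that $\nu$ is obtained by adding to $(\lambda_1,\lambda_2,\dots)$ some rearrangement $(\mu_{\sigma(1)}, \mu_{\sigma(2)},\dots)$ of $(\mu_1,\mu_2,\dots)$, where $\sigma$ is a permutation of $\{1,2,3,\dots\}$ — and then, since $\nu$ must itself be a partition while $(\lambda_i + \mu_{\sigma(i)})_i$ need not be weakly decreasing, we observe that $x^\nu$ and the monomial indexed by the sorted version of $(\lambda_i + \mu_{\sigma(i)})_i$ lie in the same $m$-orbit, so $\nu$ is a constituent iff $\nu$ equals the weakly-decreasing rearrangement of $(\lambda_i + \mu_{\sigma(i)})_i$ for some $\sigma$. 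This is exactly the claimed description.

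The only genuine point requiring care — and the main (mild) obstacle — is bookkeeping with permutations of the \emph{infinite} index set $\{1,2,3,\dots\}$: since $\lambda$ and $\mu$ have only finitely many nonzero parts, only finitely many of the $\lambda_i + \mu_{\sigma(i)}$ are nonzero, so every relevant $\sigma$ differs from the identity only on a finite set after composing with a sorting permutation, and the rearrangements $\beta$ of $\lambda$ and $\gamma$ of $\mu$ that I allow are genuinely the finitely-many distinct ones. I would spell out that allowing arbitrary bijections $\sigma$ of $\{1,2,3,\dots\}$ (rather than just finite permutations, or just permutations of a fixed finite window containing the supports) yields the same set of constituents, since padding with zeros is harmless. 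Once this is settled the proof is a direct unwinding of the definition of monomial symmetric functions and their multiplication, with no deeper input needed.
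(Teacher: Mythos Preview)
Your proposal is correct and is exactly the direct unpacking of the definition of monomial symmetric functions that the paper has in mind; the paper's own proof is the single word ``Clear.'' The one phrase to tighten is ``fixing $\beta$ to be $\lambda$ itself'': taken literally this would demand $\nu=\lambda+\mu\circ\sigma$ without sorting (which can fail, e.g.\ $\lambda=(2,1)$, $\mu=(2,0)$, $\nu=(3,2)$), but you immediately recover the correct statement by passing to the $m$-orbit and sorting, so the argument stands.
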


\begin{proof}
Clear.
\end{proof}

\begin{corollary}\label{cor:W(Bool)-comblem2}
Let $\lambda$ and $\mu$ be two partitions. Then, every constituent $\nu$ of $m_{\lambda} m_{\mu}$ satisfies $\lambda\pleq \nu$ and $\mu\pleq \nu$.
\end{corollary}
\begin{proof}
Let $\nu$ be a constituent of $m_{\lambda} m_{\mu}$. Due to (\ref{pro:W(Bool)-comblem1}), there
exists a permutation $\sigma$ of the set $\left\lbrace 1,2,3,...\right\rbrace$ such that $\nu$ is
the weakly decreasing permutation of the sequence $\left(\lambda_1 + \mu_{\sigma\left(1\right)},
\lambda_2 + \mu_{\sigma\left(2\right)}, \lambda_3 + \mu_{\sigma\left(3\right)}, ...\right)$. But
for each positive integer $i$, at least $i$ terms of the sequence $\left(\lambda_1 +
\mu_{\sigma\left(1\right)}, \lambda_2 + \mu_{\sigma\left(2\right)}, \lambda_3 +
\mu_{\sigma\left(3\right)}, ...\right)$ are $\geq \lambda_i$ (since at least $i$ terms of the
sequence $\left(\lambda_1,\lambda_2,\lambda_3,...\right)=\lambda$ are $\geq \lambda_i$). Hence, for
each positive integer $i$, at least $i$ terms of $\nu$ (which is a permutation of this sequence)
are $\geq \lambda_i$. Since $\nu$ is a partition, this entails $\nu_i\geq\lambda_i$. Thus,
$\lambda\pleq\nu$, and similarly $\mu\pleq\nu$.
\end{proof}

The next proposition is more technical and will be used to classify the prime partition ideals:

\begin{proposition}
\label{pro:W(Bool)-comblem3}
Let $\lambda$ and $\mu$ be two partitions, and $i$ and $j$ be two positive integers such that
$\lambda_i > \mu_i$, $\lambda_i > \lambda_{i+1}$, $\mu_j > \lambda_j$, and $\mu_j > \mu_{j+1}$. Let
$\bar{\lambda}$ denote the result of diminishing the $i$-th part of $\lambda$ by $1$, and let $\bar{\mu}$ be
the result of diminishing the $j$-th part of $\mu$ by $1$. Then, for every constituent $\nu$ of
$m_{\bar{\lambda}} m_{\bar{\mu}}$, we have either $\lambda\pleq\nu$ or $\mu\pleq\nu$.
\end{proposition}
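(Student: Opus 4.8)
The plan is to work directly with the combinatorial description of constituents from Proposition \ref{pro:W(Bool)-comblem1}: a constituent $\nu$ of $m_{\bar\lambda}m_{\bar\mu}$ is the weakly decreasing rearrangement of a sequence $\bigl(\bar\lambda_1+\bar\mu_{\sigma(1)}, \bar\lambda_2+\bar\mu_{\sigma(2)}, \dots\bigr)$ for some permutation $\sigma$ of $\{1,2,3,\dots\}$. Fix such a $\sigma$ and the resulting $\nu$. By Corollary \ref{cor:W(Bool)-comblem2} applied to $\bar\lambda$ and $\bar\mu$ we already know $\bar\lambda\pleq\nu$ and $\bar\mu\pleq\nu$, so $\nu$ differs from being $\geq\lambda$ only possibly in coordinate $i$ (where we need $\nu_i\geq\lambda_i=\bar\lambda_i+1$), and from being $\geq\mu$ only possibly in coordinate $j$ (where we need $\nu_j\geq\mu_j=\bar\mu_j+1$). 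So the goal reduces to: show that either $\nu_i\geq\bar\lambda_i+1$ or $\nu_j\geq\bar\mu_j+1$.

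First I would record the refined counting statement behind Corollary \ref{cor:W(Bool)-comblem2}: for each positive integer $k$, the number of indices $\ell$ with $\bar\lambda_\ell+\bar\mu_{\sigma(\ell)}\geq\bar\lambda_k$ is at least $k$ — indeed all $\ell\leq k$ qualify since $\bar\lambda_\ell\geq\bar\lambda_k$ and $\bar\mu_{\sigma(\ell)}\geq 0$. The hypotheses $\lambda_i>\lambda_{i+1}$ and $\mu_j>\mu_{j+1}$ say that $i$ and $j$ are the \emph{last} indices at which $\lambda$, resp.\ $\mu$, attains their values $\lambda_i$, resp.\ $\mu_j$; equivalently $\bar\lambda_i=\lambda_i-1\geq\bar\lambda_{i+1}=\lambda_{i+1}$ with strict drop $\bar\lambda_i\geq\lambda_{i+1}$ being automatic but $\bar\lambda_{i-1}=\lambda_{i-1}\geq\lambda_i>\bar\lambda_i$. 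The key point to extract is: the sequence entries that are $\geq\lambda_i=\bar\lambda_i+1$ coming purely from $\bar\lambda_\ell$ with $\ell<i$ number exactly $i-1$; to get an $i$-th large entry we must use a summand $\bar\mu_{\sigma(\ell)}\geq 1$ paired with some $\ell\leq i$. Symmetrically on the $\mu$ side.

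The heart of the argument is then a pigeonhole/exchange on the permutation $\sigma$. Suppose for contradiction that $\nu_i\leq\bar\lambda_i$ and $\nu_j\leq\bar\mu_j$. The first inequality forces: for every $\ell\leq i$ with $\bar\mu_{\sigma(\ell)}\geq 1$ we would overshoot unless $\bar\lambda_\ell$ is small, but $\bar\lambda_\ell\geq\bar\lambda_i$ for $\ell\leq i$, so in fact $\bar\mu_{\sigma(\ell)}=0$ for all $\ell\leq i$ — i.e., $\sigma$ sends $\{1,\dots,i\}$ into the indices $\ell'$ with $\bar\mu_{\ell'}=0$, hence with $\ell'>$ (the length of $\bar\mu$), and in particular $\sigma(\ell)>j$ for all $\ell\leq i$ since $\bar\mu_j=\mu_j-1$ could be zero or positive — here I would split into the case $\mu_j\geq 2$ (so $\bar\mu_j\geq 1$, forcing $\sigma(\ell)\neq j$ and more) and $\mu_j=1$. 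Symmetrically, $\nu_j\leq\bar\mu_j$ forces $\bar\lambda_{\sigma^{-1}(\ell')}=0$ for all $\ell'\leq j$ of the relevant kind, i.e.\ $\sigma^{-1}(\{1,\dots,j\})$ lies among indices where $\bar\lambda$ vanishes, hence $>i$ (using $\lambda_j<\mu_j$, so $\lambda$ is "shorter" at that level). Now use $\lambda_i>\mu_i$ and $\mu_j>\lambda_j$: these force $i$ and $j$ to be positioned so that the two containment constraints on $\sigma$ are incompatible — concretely, $\sigma$ must map an $i$-element set away from $\{1,\dots,j\}$ while $\sigma^{-1}$ must map a $j$-element set away from $\{1,\dots,i\}$, and tracking which ranges are forced (via the inequalities $\lambda_i>\mu_i$, $\mu_j>\lambda_j$ that pin down whether $i<j$, $i>j$, or where the vanishing ranges start) yields a contradiction with $\sigma$ being a bijection.

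\textbf{Main obstacle.} The bookkeeping in the last paragraph is the delicate part: one must handle several cases according to the sign of $i-j$ and according to whether $\mu_j=1$ or $\mu_j\geq 2$ (and dually whether $\lambda_i=1$ — but note $\lambda_i>\mu_i\geq 0$ only gives $\lambda_i\geq 1$), because the partition $\bar\lambda$ or $\bar\mu$ may acquire a shorter length than $\lambda$ or $\mu$, shifting exactly which "vacated slots" the permutation $\sigma$ is allowed to use. I expect the clean way to organize it is to phrase everything in terms of the two counting functions $N_\lambda(k)=\#\{\ell:\bar\lambda_\ell+\bar\mu_{\sigma(\ell)}\geq k\}$ and $N_\mu(k)$, show $\nu_i<\lambda_i$ is equivalent to $N_\lambda(\lambda_i)\leq i-1$, likewise $\nu_j<\mu_j\Leftrightarrow N_\mu(\mu_j)\leq j-1$, and then derive a contradiction by exhibiting $\geq i+j-1$ indices $\ell$ that must each be counted on the $\lambda$-side or the $\mu$-side — using the strict-drop hypotheses to guarantee the relevant index sets of $\lambda$ and of $\mu$ overlap enough. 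Everything else (reducing to coordinates $i$ and $j$ via Corollary \ref{cor:W(Bool)-comblem2}, the elementary counting) is routine.
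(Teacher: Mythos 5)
Your opening reduction is correct and matches the paper: by Corollary \ref{cor:W(Bool)-comblem2} applied to $\bar\lambda$ and $\bar\mu$, it suffices to show that either $\nu_i\geq\lambda_i$ or $\nu_j\geq\mu_j$. But the next step contains a genuine error. From the contradiction hypothesis $\nu_i\leq\bar\lambda_i$ you \emph{cannot} conclude that $\bar\mu_{\sigma(\ell)}=0$ for all $\ell\leq i$. For $\ell<i$ one has $\bar\lambda_\ell=\lambda_\ell\geq\lambda_{i-1}\geq\lambda_i$ already, so entry $\ell$ is counted in $N_\lambda(\lambda_i)$ whether or not $\bar\mu_{\sigma(\ell)}$ is zero --- increasing $\bar\mu_{\sigma(\ell)}$ does not create a new entry $\geq\lambda_i$, hence there is no ``overshoot.'' The only constraint you get at $\ell\leq i$ is $\bar\mu_{\sigma(i)}=0$. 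The same issue affects the symmetric claim on the $\mu$-side. So the set-theoretic constraints on $\sigma$ that you plan to pigeonhole on (``$\sigma$ maps an $i$-element set away from $\{1,\dots,j\}$'') are not actually forced, and the bookkeeping you correctly flag as the main obstacle is being attempted from a false premise; you also leave it unfinished.

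The paper sidesteps the pigeonhole entirely with a short direct case split. Assume WLOG $i<j$ (possible since $\lambda_i>\mu_i$ and $\mu_j>\lambda_j$ force $i\neq j$). Case (i): $\bar\mu_{\sigma(i)}\geq 1$. Then entry $i$ of the sequence equals $\bar\lambda_i+\bar\mu_{\sigma(i)}\geq\lambda_i$, and together with entries $1,\dots,i-1$ (each $\geq\lambda_i$) this gives $\nu_i\geq\lambda_i$, hence $\lambda\pleq\nu$. Case (ii): $\bar\mu_{\sigma(i)}=0$. Then entry $i$ equals $\bar\lambda_i\geq\mu_i\geq\mu_j$ (using $\lambda_i>\mu_i$ and $i<j$). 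The $j-1$ entries at positions $\sigma^{-1}(1),\dots,\sigma^{-1}(j-1)$ each have value $\geq\bar\mu_{j-1}=\mu_{j-1}\geq\mu_j$, and they are distinct from position $i$ because $\sigma(i)$ indexes a zero-length row of $\bar\mu$ while $\bar\mu_1,\dots,\bar\mu_{j-1}\geq\mu_j\geq 1$. That gives $j$ entries $\geq\mu_j$, hence $\nu_j\geq\mu_j$ and $\mu\pleq\nu$. Note in particular that the observation you need is not ``$\bar\mu_{\sigma(\ell)}=0$ for $\ell\leq i$'' but only ``$\bar\mu_{\sigma(i)}=0$,'' and even that is used constructively rather than fed into a pigeonhole.
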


\begin{proof}
Since $\lambda_i > \mu_i$ and $\mu_j > \lambda_j$, we have $i\neq j$. Thus, we can assume by 
symmetry that $i<j$.

Fix a constituent $\nu$ of $m_{\bar{\lambda}} m_{\bar{\mu}}$. Putting (\ref{pro:W(Bool)-comblem1})
into words, we see that the constituents of $m_{\bar{\lambda}} m_{\bar{\mu}}$ are found by
combining rows of $\bar{\lambda}$ with rows (possibly with zero boxes) of $\bar{\mu}$, such that no
two rows of $\bar{\lambda}$ are combined with the same row of $\bar{\mu}$ and vice versa. Let us
consider the matching between rows of $\bar{\lambda}$ and rows of $\bar{\mu}$ that gives rise to
the constituent $\nu$. We distinguish between two cases, depending on whether (i) the $i$-th row of
$\bar{\lambda}$ is combined with a nonzero-length row of $\bar{\mu}$ or (ii) with a zero-length row
of $\bar{\mu}$.

\emph{Case (i).} If the $i$-th row of $\bar{\lambda}$ is combined with a row of $\bar{\mu}$ with at
least one box, then the resulting row has at least $\bar{\lambda}_i+1=\lambda_i$ boxes. Hence, in
this case, the partition $\nu$ has at least $i$ rows of length $\geq \lambda_i$ (namely, the row
obtained by combining the $i$-th row of $\bar{\lambda}$ with its match, and also the rows obtained
by combining the first $i-1$ rows of $\bar{\lambda}$ with their matches). In other words,
$\nu_i\geq \lambda_i$. Together with $\bar{\lambda}\pleq\nu$ (which follows from
(\ref{cor:W(Bool)-comblem2})), this yields $\lambda\pleq\nu$, which completes the proof in case
(i).

\emph{Case (ii).} If the $i$-th row of $\bar{\lambda}$ is combined with a row of $\bar{\mu}$ with
zero boxes, then the resulting row has $\bar{\lambda}_i\geq\mu_i\geq \mu_j$ boxes. But the first
$j-1$ rows of $\bar{\mu}$, whatever they are combined with, also have at least
$\bar{\mu}_{j-1}=\mu_{j-1}\geq\mu_j$ boxes. So there are at least $j$ rows in $\nu$ with at least
$\mu_j$ boxes. Consequently, $\nu_j\geq \mu_j$. Together with $\bar{\mu}\pleq\nu$ (this is a
consequence of (\ref{cor:W(Bool)-comblem2})), this results in $\mu\pleq\nu$, which completes the
proof in case (ii). 
\end{proof}

Next we include a fact we will not use until section~\ref{sec:countability}:

\begin{proposition}\label{pro:W(Bool)-lr-extremals}
Let $\lambda$ and $\mu$ be two partitions.
	\begin{enumerate}
		\item Let $\lambda + \mu$ denote the partition 
			$\left(\lambda_1+\mu_1,\lambda_2+\mu_2,\lambda_3+\mu_3,...\right)$. Then, 
			$\lambda + \mu$ is a constituent of $m_\lambda m_\mu$.
		\item \label{part:uplus}
			Let $\lambda \uplus \mu$ denote the partition obtained by sorting the components of 
			the vector $\left(\lambda_1,\mu_1,\lambda_2,\mu_2,\lambda_3,\mu_3,...\right)$ in weakly 
			decreasing order. Then, $\lambda \uplus \mu$ is a constituent of $m_\lambda m_\mu$.
		\item If $\lambda$ and $\mu$ are both nonzero, then $m_\lambda m_\mu$ has at least two 
			distinct nonzero constituents.
	\end{enumerate}
\end{proposition}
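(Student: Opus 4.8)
The plan is to read off all three parts from the explicit description of constituents in Proposition~(\ref{pro:W(Bool)-comblem1}): a partition $\nu$ is a constituent of $m_\lambda m_\mu$ if and only if it is the weakly decreasing rearrangement of some sequence $\bigl(\lambda_i+\mu_{\sigma(i)}\bigr)_{i\ge 1}$, where $\sigma$ runs over the permutations of $\{1,2,3,\dots\}$. For part~(1) I would simply take $\sigma=\id$: since $\lambda$ and $\mu$ are weakly decreasing, so is $(\lambda_i+\mu_i)_{i\ge 1}$, which is therefore already the partition $\lambda+\mu$, and hence $\lambda+\mu$ is a constituent.

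For part~(2), let $k$ and $l$ be the numbers of nonzero parts of $\lambda$ and $\mu$. The idea is to choose $\sigma$ so that each of the first $k$ rows of $\lambda$ is paired with an empty row of $\mu$ and each of the first $l$ rows of $\mu$ is paired with an empty row of $\lambda$; then no genuine addition ever takes place, so the multiset of terms $\lambda_i+\mu_{\sigma(i)}$ equals the multiset formed by the parts of $\lambda$ together with the parts of $\mu$, whose weakly decreasing rearrangement is $\lambda\uplus\mu$ by definition. Concretely, one may set $\sigma(i)=l+i$ for $1\le i\le k$, then $\sigma(k+j)=j$ for $1\le j\le l$, and finally $\sigma(k+l+j)=k+l+j$ for $j\ge 1$; a direct check shows $\sigma$ is a bijection of $\{1,2,3,\dots\}$. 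For $i\le k$ the $i$-th term is $\lambda_i+\mu_{l+i}=\lambda_i$; for $k<i\le k+l$ it is $\lambda_i+\mu_{i-k}=\mu_{i-k}$; and for $i>k+l$ it is $0$. So the nonzero terms are precisely $\lambda_1,\dots,\lambda_k,\mu_1,\dots,\mu_l$, whose rearrangement is $\lambda\uplus\mu$, proving part~(2).

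For part~(3), I would combine the two constituents just produced. By Corollary~(\ref{cor:W(Bool)-comblem2}) every constituent $\nu$ satisfies $\lambda\pleq\nu$, so when $\lambda$ is nonzero all constituents are nonzero partitions. Now $\lambda+\mu$ has exactly $\max\{k,l\}$ nonzero parts, whereas $\lambda\uplus\mu$ has exactly $k+l$; if $\lambda$ and $\mu$ are both nonzero then $k,l\ge 1$, hence $\max\{k,l\}<k+l$, so $\lambda+\mu\ne\lambda\uplus\mu$. Thus $m_\lambda m_\mu$ has at least two distinct nonzero constituents.

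I do not expect a genuine obstacle: the one place requiring attention is the explicit verification in part~(2) that the prescribed $\sigma$ is indeed a permutation of $\{1,2,3,\dots\}$ and produces the claimed row lengths, which is routine index bookkeeping rather than a conceptual difficulty.
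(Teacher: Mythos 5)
Your proof is correct and follows essentially the same route as the paper's: read all three parts off Proposition~(\ref{pro:W(Bool)-comblem1}), taking $\sigma=\id$ for part~(1), a permutation that pairs nonzero rows of $\lambda$ with empty rows of $\mu$ and vice versa for part~(2), and then comparing lengths ($\max\{k,l\}$ versus $k+l$) for part~(3). The only difference is cosmetic: you spell out an explicit $\sigma$ for part~(2) and verify it is a bijection, whereas the paper merely states the defining property of $\sigma$ (in fact your explicit construction is a bit more careful than the paper's one-line description, which states the condition with $\lambda$ and $\mu$ interchanged).
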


\begin{proof}
(1): Apply (\ref{pro:W(Bool)-comblem1}) where $\sigma$ is the identity permutation.

(2): Apply (\ref{pro:W(Bool)-comblem1}) where $\sigma$ has the property that $\sigma(1),\dots,\sigma(b)$ are all greater than the length of $\lambda$, where $b$ is the length
of $\mu$.

(3): Since $\lambda$ and $\mu$ are both nonzero, the partitions $\lambda + \mu$ and $\lambda \uplus
\mu$ have different lengths, and are thus distinct. (The length of the former is the maximum of the
lengths of $\lambda$ and $\mu$, while the length of the latter is the sum.) They are also nonzero
and, by parts (1) and (2), constituents of $m_\lambda m_\mu$. 
\end{proof}

\begin{proposition}
\label{pro:W(Bool)-primeparid}
Let $2^P$ denote the set of subsets of $P$.
	\begin{enumerate}
		\item The map $f\mapsto \pker(f)$ is a bijection $\Mod{\bN}(\Lambda_\bN,\Bool)\to 2^P$.
		\item $\pker(f)$ is a partition ideal if and only if $f$ satisfies
			$f(x)=0 \Rightarrow f(xy)=0$.
		\item $\pker(f)$ is a prime partition ideal if and only if $f$ is an $\bN$-algebra map.
	\end{enumerate}
\end{proposition}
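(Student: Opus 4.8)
The plan is to prove the three parts in turn, each of them reducing to the combinatorial input already recorded in (\ref{pro:W(Bool)-comblem1})--(\ref{pro:W(Bool)-lr-extremals}) together with two trivial observations about the target and the source: that $\Bool$ satisfies $a+b=0\Longleftrightarrow a=b=0$ and $ab=0\Longleftrightarrow(a=0\text{ or }b=0)$; and that every structure constant expressing a monomial product $m_\lambda m_\mu$ in the monomial basis lies in $\bN$, so that in products of elements of $\Lambda_\bN$ there is never any cancellation. In particular, a partition $\nu$ is a constituent of a product $xy$ (for $x,y\in\Lambda_\bN$) precisely when it is a constituent of $m_\lambda m_\mu$ for some constituent $\lambda$ of $x$ and some constituent $\mu$ of $y$; and by $\bN$-linearity of an $f\in\Mod{\bN}(\Lambda_\bN,\Bool)$, one has $f(z)=0$ if and only if every constituent of $z$ lies in $\pker(f)$. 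For part (1): since $\Lambda_\bN=\bigoplus_{\lambda\in P}\bN m_\lambda$ is the free $\bN$-module on $(m_\lambda)_{\lambda\in P}$, the map $f\mapsto(f(m_\lambda))_{\lambda}$ is a bijection $\Mod{\bN}(\Lambda_\bN,\Bool)\to\Bool^P$, and composing with the bijection $\Bool^P\to 2^P$ carrying a function to its zero set yields the bijection $f\mapsto\pker(f)$.

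For part (2), I would argue as follows. First suppose $\pker(f)$ is a partition ideal and take $x,y$ with $f(x)=0$; then every constituent of $x$ lies in $\pker(f)$, and any constituent $\nu$ of $xy$ is a constituent of $m_\lambda m_\mu$ for some constituent $\lambda$ of $x$, so $\lambda\pleq\nu$ by (\ref{cor:W(Bool)-comblem2}) and hence $\nu\in\pker(f)$, giving $f(xy)=0$. Conversely, suppose $f(x)=0\Rightarrow f(xy)=0$ and take $\lambda\in\pker(f)$ with $\lambda\pleq\mu$; set $\rho\in P$ to be the weakly decreasing rearrangement of the (finitely supported, nonnegative) sequence $(\mu_i-\lambda_i)_{i\geq 1}$, so that (\ref{pro:W(Bool)-comblem1}), applied with a permutation $\sigma$ satisfying $\rho_{\sigma(i)}=\mu_i-\lambda_i$ for all $i$, exhibits $\mu$ as a constituent of $m_\lambda m_\rho$. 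Since $f(m_\lambda)=0$ forces $f(m_\lambda m_\rho)=0$, every constituent of $m_\lambda m_\rho$ lies in $\pker(f)$, in particular $\mu$; thus $\pker(f)$ is a partition ideal.

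For part (3), the plan is: if $f$ is an $\bN$-algebra map then $f(x)=0$ implies $f(xy)=f(x)f(y)=0$, so $\pker(f)$ is a partition ideal by part (2); it is proper since $f(1)=1$ and $1=m_\lambda$ for the empty partition $\lambda$; and if every constituent of $m_\lambda m_\mu$ lies in $\pker(f)$ then $f(m_\lambda)f(m_\mu)=f(m_\lambda m_\mu)=0$, so $f(m_\lambda)=0$ or $f(m_\mu)=0$, i.e.\ $\lambda\in\pker(f)$ or $\mu\in\pker(f)$; hence $\pker(f)$ is prime. Conversely, suppose $\pker(f)$ is a prime partition ideal. Since $\pker(f)\neq P$ is upward closed and the empty partition is $\pleq$-below every partition, the empty partition lies outside $\pker(f)$, i.e.\ $f(1)=1$. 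For multiplicativity fix $x,y$: if $f(x)=0$ (or $f(y)=0$) then part (2) gives $f(xy)=0=f(x)f(y)$; and if $f(x)=f(y)=1$, then there are constituents $\lambda$ of $x$ and $\mu$ of $y$ with $f(m_\lambda)=f(m_\mu)=1$, so $\lambda,\mu\notin\pker(f)$, whence primality furnishes a constituent $\nu$ of $m_\lambda m_\mu$ with $\nu\notin\pker(f)$; by the no-cancellation remark $\nu$ is then a constituent of $xy$, so $f(xy)=1=f(x)f(y)$. Thus $f$ is additive, multiplicative and unital, i.e.\ an $\bN$-algebra homomorphism.

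The only step carrying any content beyond unwinding definitions is the converse half of part (2): one must produce, for $\lambda\pleq\mu$, an explicit element of $\Lambda_\bN$ whose product with $m_\lambda$ has $m_\mu$ as a constituent, and the cleanest choice is $m_\rho$ with $\rho$ the sorted difference sequence, which (\ref{pro:W(Bool)-comblem1}) handles directly. (Alternatively one could take $e_1^{\,|\mu|-|\lambda|}$ and climb a saturated chain from $\lambda$ to $\mu$ in Young's lattice one box at a time, using that constituents of $m_\nu e_1$ include every one-box enlargement of $\nu$.) All remaining steps are bookkeeping with the nonnegativity of monomial structure constants and the arithmetic of $\Bool$, so I expect no genuine obstacle.
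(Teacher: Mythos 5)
Your proposal is correct, and its overall structure mirrors the paper's: part (1) is the same free-module observation, part (3) is the same reduction using part (2) and (\ref{pro:W(Bool)-comblem1})--(\ref{cor:W(Bool)-comblem2}), and the one direction of part (2) from ``$\pker(f)$ is a partition ideal'' to the implication $f(x)=0\Rightarrow f(xy)=0$ is argued identically.

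The only genuine divergence is in the other direction of part (2). To go from $f(x)=0\Rightarrow f(xy)=0$ to upward-closure of $\pker(f)$, the paper multiplies $m_\lambda$ by $m_1$ and observes via (\ref{pro:W(Bool)-comblem1}) that the constituents of $m_1 m_\lambda$ are exactly the one-box enlargements of $\lambda$, then inducts along a saturated chain from $\lambda$ to $\mu$. You instead take $\rho$ to be the weakly decreasing rearrangement of the difference sequence $(\mu_i-\lambda_i)_i$ and produce $\mu$ directly as a constituent of $m_\lambda m_\rho$, choosing the permutation $\sigma$ in (\ref{pro:W(Bool)-comblem1}) so that $\rho_{\sigma(i)}=\mu_i-\lambda_i$; this reaches $\mu$ in a single step with no induction. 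Both arguments are correct applications of (\ref{pro:W(Bool)-comblem1}) and buy the same thing; yours is marginally slicker, the paper's uses a more elementary instance of the lemma (products with $m_1$). You even note the paper's route in your closing parenthetical, so there is nothing to add.
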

\begin{proof}
(1): Linear maps $\Lambda_\bN\to\Bool$ are in bijection with set maps $P\to\Bool$, which are
in bijection with subsets of $P$.

(2): From (\ref{pro:W(Bool)-comblem1}), it follows that for any partition 
$\lambda$, we have
	$$
	m_1 m_\lambda = \sum_\mu a_\mu m_\mu,
	$$
where the sum is over all partitions $\mu$ obtained by adding a single box to (the Young diagram 
of) $\lambda$, and the $a_\mu$ are at least $1$.
Therefore we have 
	\begin{equation*}
	\label{eq:local-2385}
	f(m_1m_\lambda) = \sum_\mu a_\mu f(m_\mu).
	\end{equation*}
If $\lambda\in\pker(f)$ and if $f$ satisfies $f(x)=0 \Rightarrow f(xy)=0$, then we have $f(m_1m_\lambda)=0$ and hence
$f(m_\mu)=0$ for all $\mu$ obtained by adding one box to $\lambda$. It then follows by induction
that $f(m_\mu)=0$ for all $\mu$ with $\lambda\pleq \mu$, which means that $\pker(f)$ is a partition
ideal.

Conversely, suppose $\pker(f)$ is a partition ideal. Let $\lambda\in\pker(f)$. Since for any
partition $\mu$, every constituent $\nu$ of $m_\lambda m_\mu$ satisfies $\lambda\pleq\nu$ (by
 (\ref{cor:W(Bool)-comblem2})), we have $f(m_\lambda m_\mu)=0$. The general implication
$f(x)=0 \Rightarrow f(xy)=0$ then follows. Indeed, write $x=\sum_\lambda x_\lambda m_\lambda$,
$y=\sum_\mu y_\mu m_\mu$. Since $f(x)=0$, it follows that whenever $x_\lambda\neq 0$, we have
$\lambda\in\pker(f)$ and hence $f(m_\lambda m_\mu)=0$. Therefore we have
	$$
	f(xy)=\sum_{\lambda,\mu} x_\lambda y_\mu f(m_\lambda m_\mu) = 0.
	$$

(3): First observe that under either assumption, we have $\pker(f)\neq P$ and $f(1)=1$. Indeed,
if $f$ is an $\bN$-algebra map, then we have $1=f(1)=f(m_0)$ and hence $\pker(f)\neq P$. 
Conversely, if $\pker(f)$ is a prime partition ideal, then there is a partition $\lambda$ such that 
$\lambda\not\in\pker(f)$. Since $\pker(f)$ is an ideal, we have $0\not\in\pker(f)$ and hence that 
$f(1)=f(m_0)=1$.

Now suppose that $f$ is an $\bN$-algebra map and that $\lambda,\mu$ are partitions such that every
constituent of $m_\lambda m_\mu$ is in $\pker(f)$. Then we have $f(m_\lambda)f(m_\mu)=f(m_\lambda
m_\mu)=0$. It follows that either $\lambda\in\pker(f)$ or $\mu\in\pker(f)$, and hence that
$\pker(f)$ is prime.

Conversely, suppose $\pker(f)$ is prime. By part (2), it is enough to show that if $f(x)=f(y)=1$
then $f(xy)=1$. In this case, $x$ has a constituent $\lambda\not\in\pker(f)$ and $y$ has a
constituent $\mu\not\in\pker(f)$. Since $\pker(f)$ is prime, $m_\lambda m_\mu$ must have a
constituent not in $\pker(f)$. This is also a constituent of $xy$. Therefore $f(xy)=1$.
\end{proof}

\begin{proposition}
\label{pro:W(Bool)-primeids}
For integers $x,y\in\bN$, consider the partition ideal
	$$
	I_{(x,y)}=\setof{\lambda\in P}{(y+1)^{x+1}\pleq \lambda},
	$$
where $(y+1)^{x+1}$ denotes the partition $(y+1,\dots,y+1)$ with $y+1$ occurring $x+1$ times.
Then $I_{(x,y)}$ is prime. Conversely, every nonempty prime partition ideal is of this form.	
\end{proposition}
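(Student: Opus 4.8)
The plan is to prove the two assertions separately. For the first one---that each $I_{(x,y)}$ is prime---the tool is Proposition~\ref{pro:W(Bool)-comblem3}. First I would observe that $I_{(x,y)}$ is visibly a partition ideal, and that $I_{(x,y)}\neq P$ since the empty partition is not in it. Now suppose $\lambda,\mu$ are partitions such that every constituent of $m_\lambda m_\mu$ lies in $I_{(x,y)}$, but neither $\lambda$ nor $\mu$ does. Writing $r=x+1$ and $s=y+1$, the condition $\lambda\notin I_{(x,y)}$ means $s^r\not\pleq\lambda$, i.e.\ $\lambda_r\leq s-1$, i.e.\ $\lambda_r<s$; likewise $\mu_r<s$. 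I would use these two strict inequalities, together with the rectangular shape of $s^r$, to produce indices $i,j$ and partitions $\bar\lambda,\bar\mu$ to which Proposition~\ref{pro:W(Bool)-comblem3} applies: roughly, if $\lambda_i>\mu_i$ at some place where $\lambda$ has a ``corner'' (a strict decrease) we diminish there, and symmetrically for $\mu$; the delicate point is to check that such corners can always be found so that the hypotheses $\lambda_i>\mu_i,\ \lambda_i>\lambda_{i+1},\ \mu_j>\lambda_j,\ \mu_j>\mu_{j+1}$ are all met, or else that one of $\lambda,\mu$ already dominates the other (the easy degenerate case). Once Proposition~\ref{pro:W(Bool)-comblem3} gives a constituent $\nu$ of $m_{\bar\lambda}m_{\bar\mu}$ with, say, $\lambda\pleq\nu$, and one checks $\nu\notin I_{(x,y)}$ (because $\bar\lambda,\bar\mu$ were built to have one fewer box in a relevant spot, keeping $\nu$ just under the rectangle), we contradict the hypothesis that all constituents of $m_\lambda m_\mu$---a fortiori all those of $m_{\bar\lambda}m_{\bar\mu}$, but one must connect these, presumably via Corollary~\ref{cor:W(Bool)-comblem2} and monotonicity---lie in $I_{(x,y)}$. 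I expect the bookkeeping in locating $i$ and $j$, and in verifying $\nu\notin I_{(x,y)}$, to be the main obstacle; everything else is formal.

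For the converse---every nonempty prime partition ideal $I$ is some $I_{(x,y)}$---I would argue as follows. Since $I$ is nonempty, pick a partition in it of minimal size; but more usefully, among all partitions in $I$ consider those that are minimal with respect to $\pleq$. The claim will be that $I$ has a \emph{unique} $\pleq$-minimal element and that it is rectangular. For uniqueness and rectangularity I would suppose $\rho$ is a minimal element of $I$ that is not rectangular, so it has (at least) two distinct corners, and exhibit it as a constituent---or find it among the constituents---of a product $m_{\bar\lambda}m_{\bar\mu}$ of two strictly smaller partitions neither of which is in $I$ (decreasing $\rho$ at one corner to get $\lambda$, at another to get $\mu$, or a variant thereof), contradicting primality since every constituent of $m_\lambda m_\mu$ would have to avoid being forced into $I$. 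Here Proposition~\ref{pro:W(Bool)-lr-extremals}, giving explicit constituents $\lambda+\mu$ and $\lambda\uplus\mu$, should let me pin down that $\rho$ itself, or another partition not above any element of $I\setminus\{\rho\}$, is among the constituents. Thus the minimal element is a single rectangle $s^r$, and then $I=\{\lambda : s^r\pleq\lambda\}=I_{(r-1,s-1)}$, because $I$ being an ideal gives $\supseteq$, while any $\lambda\in I$ must sit above the unique minimal element, giving $\subseteq$. The hard part of the converse is the contradiction argument showing a non-rectangular minimal element cannot exist; arranging the two decrements of $\rho$ so that the resulting $\lambda,\mu$ genuinely lie outside $I$ while $\rho$ (or something equally good) remains a constituent of $m_\lambda m_\mu$ is exactly where the combinatorial Propositions~\ref{pro:W(Bool)-comblem3} and~\ref{pro:W(Bool)-lr-extremals} have to be invoked with care.

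Finally, I would note that these two halves, combined with Proposition~\ref{pro:W(Bool)-primeparid}(3), immediately yield the classification of $\bN$-algebra maps $\Lambda_\bN\to\Bool$ with nonempty partition kernel as exactly the $f$ with $\pker(f)=I_{(x,y)}$ for some $(x,y)\in\bN^2$, which is the combinatorial engine behind Theorem~\ref{thm:intro-B}(1). I would keep the write-up of the converse organized around the single notion ``$\pleq$-minimal element of $I$'' to avoid case proliferation.
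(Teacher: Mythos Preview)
Your plan misassigns the key tools and, as written, does not go through in either direction.

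For primality of $I_{(x,y)}$ you reach for Proposition~\ref{pro:W(Bool)-comblem3}, but that result is built for the converse. It tells you every constituent of $m_{\bar\lambda}m_{\bar\mu}$ lies \emph{above} $\lambda$ or $\mu$; it cannot manufacture a constituent \emph{outside} $I_{(x,y)}$, which is what a contradiction from ``$\lambda,\mu\notin I_{(x,y)}$'' would require. And constituents of $m_{\bar\lambda}m_{\bar\mu}$ are not in general constituents of $m_\lambda m_\mu$, so your ``a fortiori'' fails. The paper's argument is direct and uses only Proposition~\ref{pro:W(Bool)-comblem1}: assuming by symmetry $\lambda_{x+1}\geq\mu_{x+1}$, it exhibits the single explicit constituent $\nu$ obtained by sorting
\[
(\lambda_1+\mu_1,\dots,\lambda_x+\mu_x,\lambda_{x+1},\mu_{x+1},\lambda_{x+2},\mu_{x+2},\dots),
\]
checks that the first $x+1$ entries are unmoved by the sort so that $\nu_{x+1}=\lambda_{x+1}$, and reads off $\lambda\in I_{(x,y)}$ from $\nu\in I_{(x,y)}$. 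No contradiction argument or case split is needed.

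For the converse, the paper separates two claims you conflate: (a) a nonempty prime $I$ has a \emph{unique} $\pleq$-minimal element, and (b) that element is rectangular. Your sketch only attacks (b), and even there the logic is inverted. You propose to exhibit a non-rectangular minimal $\rho$ as a constituent of $m_{\bar\lambda}m_{\bar\mu}$ with $\bar\lambda,\bar\mu\notin I$; but $|\bar\lambda|+|\bar\mu|=2|\rho|-2$, so $\rho$ is almost never a constituent, and in any case the contrapositive of primality only says \emph{some} constituent lies outside $I$ when the factors do---it does not force $\rho$ out. What one actually needs is the reverse: show that \emph{every} constituent of the product lies in $I$, so that primality forces one of the factors into $I$, contradicting minimality. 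This is precisely how the paper proves (a), taking $\lambda,\mu$ to be two distinct minimal elements and using Proposition~\ref{pro:W(Bool)-comblem3} to get ``every constituent contains $\lambda$ or $\mu$''. For (b), the paper reformulates rectangularity of the unique minimal $\pi$ as the implication $\pi\pleq\lambda\cup\mu\Rightarrow(\pi\pleq\lambda\text{ or }\pi\pleq\mu)$, and proves it by noting via Corollary~\ref{cor:W(Bool)-comblem2} that every constituent $\nu$ of $m_\lambda m_\mu$ satisfies $\lambda,\mu\pleq\nu$, hence $\pi\pleq\nu$, hence $\nu\in I$; then primality gives $\lambda\in I$ or $\mu\in I$. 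Proposition~\ref{pro:W(Bool)-lr-extremals} plays no role.
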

\begin{proof}
Let us first show that $I_{(x,y)}$ is prime. Let $\lambda$ and $\mu$ be partitions, and suppose all the constituents of $m_\lambda m_\mu$ are in $I_{(x,y)}$. By symmetry, we may assume $\lambda_{x+1}\geq\mu_{x+1}$. We will show $\lambda\in I_{(x,y)}$. Let $\nu$ denote the partition obtained by sorting the
components of the vector
	$$
	(\lambda_1+\mu_1,\dots,\lambda_{x}+\mu_x,\lambda_{x+1},\mu_{x+1},\lambda_{x+2},\mu_{x+2},\dots)
	$$
in weakly decreasing order. Then $\nu$ is a constituent of $m_\lambda m_\mu$ (due to  
(\ref{pro:W(Bool)-comblem1})), and hence we have $\nu\in I_{(x,y)}$. Further
the sorting does not change the first $x+1$ components, and so we have 
$\lambda_{x+1}=\nu_{x+1}\geq y+1$ and hence $\lambda\in I_{(x,y)}$.

For the converse, let $I$ be a nonempty prime partition ideal. Let us first show that $I$ contains
a unique minimal element with respect to $\pleq$. Since $I$ is nonempty, there exists some minimal
element. Now suppose for a contradiction that we have two distinct minimal elements $\lambda,\mu\in
I$. Then $\lambda\pnleq\mu$, and so there exists an integer $i$ such that $\lambda_i > \mu_i$ and
$\lambda_i > \lambda_{i+1}$. Likewise, there is a $j$ such that $\mu_j > \lambda_j$ and $\mu_j >
\mu_{j+1}$. Now let $\bar{\lambda}$ denote the result of diminishing the $i$-th part of $\lambda$ by
$1$, and let $\bar{\mu}$ be the result of diminishing the $j$-th part of $\mu$ by $1$. By
(\ref{pro:W(Bool)-comblem3}), for every constituent $\nu$ of $m_{\bar{\lambda}} m_{\bar{\mu}}$, we
have either $\lambda\pleq\nu$ or $\mu\pleq\nu$, and hence $\nu\in I$. Therefore, since the
partition ideal $I$ is prime, either $\bar{\lambda}$ or $\bar{\mu}$ must lie in $I$, and so either $\lambda$
or $\mu$ is not minimal.

It remains to show that the minimal element $\pi$ of $I$ is of the form $(y+1)^{x+1}$. This is
equivalent to showing that if $\pi$ is contained in the union of two partitions $\lambda$ and $\mu$
(that is, we have $\pi_i\leq\lambda_i$ or $\pi_i\leq\mu_i$ for all $i$), then we have
$\pi\pleq\lambda$ or $\pi\pleq\mu$. But for any constituent $\nu$ of $m_\lambda m_\mu$, we have
$\lambda\pleq\nu$ and $\mu\pleq\nu$, by (\ref{cor:W(Bool)-comblem2}). It follows that
$\pi_i\leq\nu_i$ for all $i$. In other words, $\nu$ lies in $I$. Since $I$ is prime, we have
$\lambda\in I$ or $\mu\in I$, and hence $\pi\pleq\lambda$ or $\pi\pleq\mu$. \end{proof}

\subsection{Explicit description of $W(\Bool)$}
\label{subsec:explicit-W(Bool)}
Here we prove parts~(\ref{part:W(Bool)-bijection})--(\ref{part:W(Bool)-semiring}) of theorem~(\ref{thm:intro-B}). Following the notation there, for $x,y\in\bN$, let $(x,y)$ denote the element of $W(\Bool)=\Alg{\bN}(\Lambda_\bN,\Bool)$ corresponding under (\ref{pro:W(Bool)-primeparid}) to the prime partition ideal $I_{(x,y)}$. Thus we have
	$$
	(x,y)\: m_\lambda\mapsto 
	\left\{ 
		\begin{array}{ll} 
			0 & \text{if }\lambda_{x+1} \geq y+1 \\ 1 & \text{otherwise} 
		\end{array} 
		\right.
	$$
Also let $\infty\in W(\Bool)$ denote the element corresponding to the empty partition 
ideal, which is vacuously prime; so $\infty\:m_\lambda\mapsto 1$.	
This defines a map 
	\begin{equation}
	\label{map:W(Bool)-coords}
	\bN^2\cup\{\infty\} \longisomap W(\Bool),
	\end{equation}
which is a bijection by~(\ref{pro:W(Bool)-primeparid})--(\ref{pro:W(Bool)-primeids}).
In these terms, the $\Lambda_\bN$-semiring structure on $W(\Bool)$ is given by the
following proposition.

\begin{proposition}
\label{pro:W(Bool)-description}
	\begin{enumerate}
		\item \label{part:N-to-W(Bool)}
			Under the unique $\bN$-algebra map $\bN\to W(\Bool)$, the image of an element $x$ is 
			$(x,0)$. In particular, we have $0=(0,0)$, $1=(1,0)$, and $(x,0)+(x',0)=(x+x',0)$
			for all $x,x'\in\bN$.
		\item We have $(x,0)+(0,y)=(x,y)$, for all $x,y\in\bN$.
		\item We have $(0,y)+(0,y')=(0,\max\{y,y'\})$
			and $(0,y)(0,y')=(0,\min\{y,y'\})$, for all $y,y'\in\bN$.
		\item We have $\infty+z=\infty$ for all $z\in W(\Bool)$.
		\item \label{part:W(Bool)-infty}
			We have $\infty\cdot\infty=\infty$ and $\infty\cdot(0,y)=(0,y)$ for all $y\in\bN$.
		\item For any nonzero partition $\lambda$, we have $m_\lambda(\infty)=\infty$ and
			$m_\lambda(0,y)=(0,[y/\lambda_1])$, where $[y/\lambda_1]$ denotes the
			greatest integer at most $y/\lambda_1$. 
		\item \label{part:W(Bool)-order}
			The bijection (\ref{map:W(Bool)-coords}) identifies the partial order $\wleq$ on
			$W(\Bool)$ with the componentwise partial order on $\bN^2\cup\{\infty\}$. That is,
			we have $(x_1,y_1)\wleq (x_2,y_2)$ if and only if $x_1\leq x_2$ and $y_1\leq y_2$,
			and $z\wleq\infty$ for all $z\in W(\Bool)$.
	\end{enumerate}
\end{proposition}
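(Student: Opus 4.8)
The plan is to reduce every assertion to the combinatorics of partition kernels, exactly as in~(\ref{pro:W(Bool)-primeparid}). First I would record the analogues, for the coproducts $\Delta^+$ and $\Delta^\times$, of the product computation in~(\ref{pro:W(Bool)-comblem1}): a direct count of monomials gives
$$\Delta^+(m_\lambda) = \sum_{\mu\uplus\nu=\lambda} m_\mu\tn m_\nu,$$
the sum being over all ways to write the multiset of parts of $\lambda$ as a disjoint union of the multisets of parts of two partitions $\mu,\nu$; and the monomial constituents of $\Delta^\times(m_\lambda)$ are exactly the pairs $(\mu,\nu)$ admitting a finitely supported matrix of natural numbers whose multiset of nonzero entries is the multiset of parts of $\lambda$, whose row sums rearrange to $\mu$, and whose column sums rearrange to $\nu$. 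Since $\Bool=\{0,1\}$ and all coefficients in sight lie in $\bN$, this translates into: for $a,b\in W(\Bool)$ we have $\lambda\in\pker(a+b)$ if and only if \emph{every} splitting $\lambda=\mu\uplus\nu$ has $\mu\in\pker(a)$ or $\nu\in\pker(b)$, and $\lambda\in\pker(ab)$ if and only if \emph{every} such matrix has its sorted row-sum partition in $\pker(a)$ or its sorted column-sum partition in $\pker(b)$.

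Next I would identify the units: $0=\eps^+$ sends $m_\lambda$ to $m_\lambda(0,0,\dots)$, nonzero only for $\lambda$ empty, so $0=(0,0)$; and $1=\eps^\times$ sends $m_\lambda$ to $m_\lambda(1,0,0,\dots)$, nonzero only when $\lambda$ has at most one part, so $1=(1,0)$. Parts~(\ref{part:N-to-W(Bool)})--(3) then become short pigeonhole arguments. For instance $\pker((x,0))=\{\lambda:\ell(\lambda)\geq x+1\}$, so $\lambda\in\pker((x,0)+(x',0))$ iff every splitting has $\ell(\mu)\geq x+1$ or $\ell(\nu)\geq x'+1$, which (as $\ell(\mu)+\ell(\nu)=\ell(\lambda)$) holds iff $\ell(\lambda)\geq x+x'+1$; hence $(x,0)+(x',0)=(x+x',0)$, and the rest of~(\ref{part:N-to-W(Bool)}) follows by induction since the unique map $\bN\to W(\Bool)$ sends $x$ to $x\cdot(1,0)$. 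The identities $(0,y)+(0,y')=(0,\max\{y,y'\})$, $(0,y)(0,y')=(0,\min\{y,y'\})$, and $(x,0)+(0,y)=(x,y)$ are obtained the same way: in each case the ``$\pker\supseteq$'' direction is a one-line pigeonhole argument on the largest relevant part, and the reverse direction is witnessed by a single explicit splitting (take $\mu$ to be the appropriate number of largest parts of $\lambda$) or a single diagonal matrix. Combining these gives the composite law $(x,y)+(x',y')=(x+x',\max\{y,y'\})$ stated in~(\ref{thm:intro-B}). For part~(4), note $\pker(\infty)=\emptyset$, so $\infty$ is the maximum of $W(\Bool)$ for $\wleq$; since $a\wleq a+c$ by~(\ref{pro:inequality-comparison}), this forces $\infty+z=\infty$, and $1\wleq\infty$ together with monotonicity of $\times$ forces $\infty\cdot\infty=\infty$. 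The one remaining identity $\infty\cdot(0,y)=(0,y)$ in part~(5) is not formal: here I return to the matrix description and observe that a matrix with nonzero entries forming $\lambda$ can be arranged to have all column sums $\leq y$ precisely when $\lambda_1\leq y$ (place $\lambda_i$ in the cell in row $1$ and column $i$).

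The genuinely delicate point is part~(6), because plethysm does not interact simply with the monomial basis, so there is no coproduct shortcut. Unwinding the $\Lambda_\bN$-action, $(m_\lambda(a))(m_\nu)=a(m_\nu[m_\lambda])$, so I must decide when $\infty$ or $(0,y)$ annihilates $m_\nu[m_\lambda]$. For $\infty$ this is easy: for nonzero $\lambda$ the symmetric function $m_\lambda$ has infinitely many monomials, so $m_\nu[m_\lambda]\neq0$ for every $\nu$, whence $m_\lambda(\infty)=\infty$. For $(0,y)$ the crux is a two-sided estimate on the largest part $\mu_1$ of a monomial constituent $\mu$ of $m_\nu[m_\lambda]$: on one hand $\mu_1\geq\lambda_1\nu_1$ always, because the largest part $\nu_1$ of $\nu$ weights some letter of the alphabet $\{$monomials of $m_\lambda\}$, each of which carries a variable of exponent $\lambda_1$; on the other hand, choosing the letters to use pairwise-disjoint blocks of variables realizes a constituent with $\mu_1$ exactly $\lambda_1\nu_1$. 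Hence $(0,y)$ kills $m_\nu[m_\lambda]$ iff $\lambda_1\nu_1>y$, i.e.\ $m_\lambda(0,y)$ sends $m_\nu$ to $1$ iff $\nu_1\leq[y/\lambda_1]$, which is part~(6). Part~(7) is then formal: $\wleq$ corresponds under the bijection of~(\ref{pro:W(Bool)-primeids}) to reverse inclusion of partition kernels, i.e.\ of the ideals $I_{(x,y)}$; since these are the principal partition ideals generated by the rectangles $(y+1)^{x+1}$, we get $I_{(x_2,y_2)}\subseteq I_{(x_1,y_1)}$ iff $(y_1+1)^{x_1+1}\pleq(y_2+1)^{x_2+1}$ iff $x_1\leq x_2$ and $y_1\leq y_2$, with $\infty$ (empty kernel) on top. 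I expect part~(6), and specifically a clean proof of the lower bound $\mu_1\geq\lambda_1\nu_1$, to be the main obstacle.
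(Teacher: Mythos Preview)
Your argument is correct. The main methodological difference from the paper is that for parts (3), (5), and (6) the paper works exclusively with the power sums $\psi_r=m_{r^1}$ rather than with general $m_\nu$. Since an element whose kernel is $I_{(0,b)}$ is already determined by which $\psi_r$ it annihilates, and since the $\psi_r$ are primitive for $\Delta^+$, grouplike for $\Delta^\times$, and satisfy $\psi_r\circ m_\lambda=m_{r\lambda}$, the computations collapse to one line each: for instance $(0,y)\big(\psi_r\circ m_\lambda\big)=(0,y)(m_{r\lambda})$ is $1$ iff $r\lambda_1\le y$, which immediately gives part~(6). Your route---the explicit splitting/matrix descriptions of the coproducts and the two-sided estimate $\min_\mu\mu_1=\lambda_1\nu_1$ for constituents of $m_\nu\circ m_\lambda$---is more hands-on and yields the same answers, but the plethysm bound you flagged as the main obstacle is exactly what the power-sum trick sidesteps. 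For part~(1) the paper instead lifts to $W(\bN)$ and reads off $(x,0)$ from the series $\srp(f)=(1+t)^x$, and for $\infty\cdot\infty=\infty$ it uses injectivity of $\Delta^\times$ rather than your monotonicity argument; your versions of both are fine. Parts~(2), (4), and (7) are essentially the same in both treatments.
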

\begin{proof}

(1): Let $f$ denote the image of $x$ under the map $\bN\to W(\bN)$. Then we have
$\srp(f)=\srp(1)^x=(1+t)^x=1+\cdots+t^x$. Thus $f:\Lambda_\bN\to\bN$ satisfies
$f(m_{1^{x+1}})=f(e_{x+1})=0$ and $f(m_{1^x})=f(e_x)=1$. Therefore its image in $W(\Bool)$ has the same
properties; so its image is $(x,0)$.

(2): Let $I$ denote the partition kernel of the homomorphism $\Lambda_\bN\to\Bool$ given by
$(x,0)+(0,y)$. By the above, $I$ is empty or is of the form $I_{(x',y')}$. So it is
enough to show that the partition $(y+1)^{x+1}$ is an element of $I$ but that $y^{x+1}$ and
$(y+1)^x$ are not.

Recall that we have
	$$
	\Delta^+(m_\lambda) = \sum_{\mu,\nu} m_\mu \otimes m_\nu,
	$$
where $\mu$ and $\nu$ run over all partitions such that the vector
$(\mu_1,\nu_1,\mu_2,\nu_2,\dots)$ is a permutation of $\lambda$. Therefore $\lambda$ is in the
kernel of $(x,0)+(0,y)$ if and only if for all such decompositions of $\lambda$, we have either
$1^{x+1}\pleq\mu$ or $(y+1)^1\pleq\nu$.

Applying this to $\lambda=(y+1)^{x+1}$, we get terms of the form $m_{(y+1)^i} \otimes m_{(y+1)^j}$
where $i+j=x+1$. If $i>0$, then $(y+1)^i$ contains $(y+1)^1$. Otherwise, $j=x+1$, in which case
$(y+1)^{x+1}$ contains $1^{x+1}$. So $(y+1)^{x+1}$ is in the partition kernel $I$.

On the other hand, we can decompose $\lambda=y^{x+1}$ into $\mu=0$ and $\nu=y^{x+1}$. Then $\mu$
does not contain $1^{x+1}$ and $\nu$ does not contain $(y+1)^1$. So $y^{x+1}$ is not in $I$.
Similarly $(y+1)^x$ decomposes into $\mu=(y+1)^x$ and $\nu=0$, the first of which does not contain
$1^{x+1}$ and the second of which does not contain $(y+1)^1$. So $(y+1)^x$ is not in $I$.

(3): The image of any power sum $\psi_r=\sum_i x_i^r$
under $\Delta^+$ is $\psi_r\otimes 1 + 1\otimes
\psi_r$. Therefore the partition $r^1$ is in the partition kernel of $(0,y)+(0,y')$ if and only if
$r$ is greater than $y$ and $y'$. Therefore we have $(0,y)+(0,y')=(0,\max\{y,y'\})$. 

Similarly, we have $\Delta^\times(\psi_r)=\psi_r\tn\psi_r$, and so the partition $r^1$ is in the
partition kernel of $(0,y)(0,y')$ if and only if $r$ is greater than $y$ or $y'$. Therefore we have
$(0,y)(0,y')=(0,\min\{y,y'\})$.

(4): Since $\infty+z\geq \infty$, we have $\infty+z\wgeq\infty$, by~(\ref{pro:inequality-comparison}), and 
hence $\infty+z=\infty$.

(5): The map $\Delta^\times$ is injective because it has a retraction, for example the multiplicative
co-unit $\eps^\times$. Therefore for any partition $\lambda$, the tensor $\Delta^\times(m_\lambda)$ is a
nonempty sum of basic tensors $m_\mu\tn m_\nu$, all of which pair with $(\infty,\infty)$ to make $1$.
Therefore we have $(\infty\cdot\infty)(m_\lambda)=1$ for all $\lambda$, and hence
$\infty\cdot\infty=\infty$.

We have $\Delta^\times(\psi_r)=\psi_r\otimes \psi_r$ which pairs with $((0,y),\infty)$ to make
$(0,y)(\psi_r)$, which is $0$ if and only if $r>y$. Therefore the partition $r^1$ is in
$\pker(\infty\cdot (0,y))$ if and only if $r>y$. Thus we have $\pker(\infty\cdot (0,y))=I_{(0,y)}$, and hence
$(0,y)\cdot \infty=(0,y)$.

(6): The value of $m_\lambda(\infty)\in W(\Bool)=\Alg{\bN}(\Lambda_\bN,\Bool)$
at $m_\mu\in\Lambda_\bN$ is by definition the value of $\infty$ at
$m_\mu\circ m_\lambda$. Since $\lambda$ is nonzero, $m_\lambda$ is a sum of infinitely many
monomials in $x_1,x_2,\dots$. Therefore $m_\mu\circ m_\lambda$ is the sum of at least one such
monomial and is hence nonzero. It follows that $\infty(m_\mu\circ m_\lambda)=1$ for all
$\mu$ and hence $m_\lambda(\infty)=\infty$.

Similarly, the value of $m_\lambda(0,y)$ at $\psi_r$ is $(0,y)(\psi_r\circ m_\lambda)$. But this is
$1$ if and only if $r\lambda_1\leq y$. Therefore
$(m_\lambda(0,y))(\psi_r)$ is $1$ if and only if $r\leq[y/\lambda_1]$. And so we have
$m_\lambda(0,y)=(0,[y/\lambda_1])$.

(7): Both sides of the first part are clearly equivalent to $I_{(x_1,y_1)}\supseteq I_{(x_2,y_2)}$. 
The second part holds because $\infty$ corresponds to the empty partition ideal.
\end{proof}

\subsection{An interpretation of the algebraic structure on $W(\Bool)$}

We can express the description of $W(\Bool)$ given above in terms of the Dorroh extension, a general
construction in commutative algebra over $\bN$. Let us first recall it.

Let $f\:A\to B$ be a homomorphism of $\bN$-algebras. The Dorroh extension $D(f)$ is
defined to be $A\times B$ with the 
usual, product $\bN$-module structure and with a multiplication law given by the formula
	$$
	(x,y)(x',y') = (xx',f(x)y'+yf(x')+yy').
	$$
One can check that this is an $\bN$-algebra structure on $D(f)$ with multiplicative identity
$(1,0)$. For example, if $B$ is additively cancellative, then the map $D(f)\to A\times B$ sending 
$(x,y)$ to $(x,f(x)+y)$ is an injective homomorphism with image $\setof{(a,b)}{f(a)\leq b}$.

Now let $\bN'$ denote $\bN\cup\{\infty\}$ with the $\bN$-algebra structure where addition is $\max$
and multiplication is $\min$. Let $f$ denote the unique $\bN$-algebra map $\bN\to\bN'$. (So
$f(n)=\infty$ unless $n=0$.) Define a set map $g\:D(f)\to W(\Bool)$ by $(x,y)\mapsto (x,y)$ if
$y\neq\infty$ and $(x,\infty)\mapsto \infty$. Then $g$ is a surjective $\bN$-algebra morphism. It
follows that $W(\Bool)$ is the quotient of $D(f)$ obtained by identifying all elements of the form
$(x,\infty)$.

The algebra $\bN'$ can perhaps be demystified by observing that it is isomorphic to the image of the ghost
map $w\:W(\Bool)\to \Bool^\infty$. (See~(6.2.1) of~\cite{Borger:totalpos}.) 
Indeed, for a Witt vector of the form $(0,y)$, its image under the ghost map is the bit vector
$\langle{z_1,z_2,\dots}\rangle$ with $z_r=1$ if $r\leq y$ and otherwise $z_r=0$. Every other Witt vector
has image $\langle{1,1,\dots}\rangle$. Clearly addition and multiplication of such bit vectors are given by
taking the maximum and minimum of the corresponding $y$ values. One might say that $W(\Bool)$ can in
essence be recovered by applying a general construction to the image of the ghost map. It would be
interesting if something like this holds more generally.

It would also be interesting to give an explicit description of $W(A)$ as a $\Lambda_\bN$-semiring for any
$\Bool$-algebra $A$. This would be tantamount to having an explicit understanding of
multiplication, the coproducts $\Delta^+$ and $\Delta^\times$, and all right plethysm maps $f\mapsto f\circ
m_\lambda$ on the semiring $\Bool\tn_\bN\Lambda_\bN=\bigoplus_\lambda \Bool m_\lambda$ of symmetric functions
with Boolean coefficients.

\section{$\Wsch(\Bool)$}
\label{sec:Wsch-Bool}

The purpose of this section is to determine $\Wsch(\Bool)$. We will see in this section that it behaves very
similarly to $W(\Bool)$ in several ways. To stress these similarities, we are going to have the present
section mirror section~\ref{sect:Wbool} as closely as we can.

\subsection{Schur constituents and Schur prime ideals}
For any symmetric function $g\in\Sch$, let us say the \emph{(Schur) constituents} of $g$
are the partitions $\nu$ such that $g_\nu\neq 0$ in the unique decomposition 
$g=\sum_\nu g_\nu s_\nu$, $g_\nu\in\bN$, where the $s_\nu$ are the Schur symmetric functions.  

Let us say that a partition ideal $I$ is \emph{(Schur) prime} if $I\neq P$ and whenever partitions
$\lambda,\mu\in P$ have the property that every Schur constituent of $s_\lambda s_\mu$ is in $I$,
then either $\lambda$ or $\mu$ lies in $I$. It will follow from (\ref{pro:Wsch(Bool)-primeids})
below that Schur primality is equivalent to monomial primality; the distinction between the two
concepts is only needed until we prove that.

For any linear map $f\in\Mod{\bN}(\Sch,\Bool)$, write 
	$$
	\sker(f)=\setof{\lambda\in P}{f(s_\lambda)=0}.
	$$
We will call $\sker(f)$ the \emph{(Schur) partition kernel} of $f$.

\subsection{Constituents of products of Schur functions}

The combinatorial lemmas about monomial constituents of $m_{\lambda} m_{\mu}$ that came to our aid
in studying $W(\Bool)$ have Schur analogues. These analogues rest upon the
\emph{Littlewood--Richardson rule}, which is a formula for the coefficient of $s_\nu$ in the product
$s_{\lambda} s_{\mu}$. In order to be clear about the form of the rule we will use, we will fix some
terms. For the basic terminology of tableaux, see Stanley's book~\cite{Stanley:EC2}
(section 7.10, p.\ 309).

The \emph{reverse reading word} of a tableau $T$ is defined to be the word obtained as follows: write down the first row of $T$ in reverse order (that is, from right to left); after it, write down the second row of $T$ in reverse order; after it, the third one, and so on.

A word $w$ whose letters are positive integers is said to be a \emph{lattice permutation} if and only if every positive integer $i$ and every $j\in\left\lbrace 0,1,...,n\right\rbrace$ (where $n$ is the length of $w$) satisfy the following condition: The number of $i$'s among the first $j$ letters of $w$ is at least as large as the number of $i+1$'s among the first $j$ letters of $w$.

A skew semistandard tableau $T$ is said to be a \emph{Littlewood--Richardson tableau} if and only if its reverse reading word is a lattice permutation. 

If a tableau $T$ is filled with positive integers, then the \emph{type} of the tableau $T$ is defined to be the sequence $\left(n_1,n_2,n_3,...\right)$, where $n_k$ is the number of boxes of $T$ filled with the integer $k$. Note that the type of a tableau $T$ is not necessarily a partition, but it is a partition whenever $T$ is a Littlewood--Richardson tableau.

For any partitions $\lambda$, $\mu$ and $\nu$, we let $c_{\lambda\mu}^{\nu}$ denote the number of Littlewood--Richardson tableaux of shape $\nu / \lambda$ and type $\mu$. When $\lambda\pnleq\nu$, this number is understood to be $0$. It is easy to see that $c_{\lambda\mu}^{\nu} = 0$ unless $\left|\lambda\right| + \left|\mu\right| = \left|\nu\right|$. It is much less obvious that $c_{\lambda\mu}^{\nu} = c_{\mu\lambda}^{\nu}$ for all partitions $\lambda$, $\mu$ and $\nu$. The numbers $c_{\lambda\mu}^{\nu}$ are called the \emph{Littlewood--Richardson coefficients}.

The Littlewood--Richardson rule then states
	\begin{equation}
	\label{eq:WSch(Bool)-lr}
	s_{\lambda} s_{\mu} = \sum\limits_{\nu\in P} c_{\lambda\mu}^{\nu} s_{\nu}.
	\end{equation}
for any partitions $\lambda,\mu$. One can find a proof in Stanley's book~\cite{Stanley:EC2}
(theorem A1.3.3, p.\ 432, appendix to Chapter 7). For other points of view, see
Sagan's book~\cite{Sagan:Symmetric-group} (theorem 4.9.4) or van~Leeuwen~\cite{vanLeeuwen:LR-rule} 
(1.4.4--1.4.5).

The following analogue of (\ref{pro:W(Bool)-comblem1}) is then clear.

\begin{proposition}
\label{pro:Wsch(Bool)-comblem1}
Let $\lambda$ and $\mu$ be two partitions. Then the Schur constituents of the product $s_{\lambda} s_{\mu}$ are precisely those partitions $\nu$ for which there exists a Littlewood--Richardson tableau of shape $\nu / \lambda$ and type $\mu$.
\end{proposition}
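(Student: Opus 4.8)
The plan is to read the statement off directly from the Littlewood--Richardson rule~(\ref{eq:WSch(Bool)-lr}), using only the fact that the coefficients occurring there are honest cardinalities. First I would recall the relevant definition: the Schur constituents of $s_\lambda s_\mu$ are by definition those partitions $\nu$ whose coefficient $g_\nu\in\bN$ in the unique expansion $s_\lambda s_\mu=\sum_\nu g_\nu s_\nu$ is nonzero. By~(\ref{eq:WSch(Bool)-lr}), this coefficient is precisely $g_\nu=c_{\lambda\mu}^\nu$, the number of Littlewood--Richardson tableaux of shape $\nu/\lambda$ and type $\mu$.

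Next I would invoke the trivial observation that the cardinality of a finite set is nonzero exactly when the set is nonempty. Applied to $c_{\lambda\mu}^\nu$, this says that $\nu$ is a Schur constituent of $s_\lambda s_\mu$ if and only if there exists at least one Littlewood--Richardson tableau of shape $\nu/\lambda$ and type $\mu$, which is exactly the assertion of the proposition. One should keep in mind the convention recorded just before~(\ref{eq:WSch(Bool)-lr}), namely that $c_{\lambda\mu}^\nu$ is set to $0$ when $\lambda\pnleq\nu$; this is harmless, since then the skew shape $\nu/\lambda$ is undefined and no tableau of that shape exists, so the ``if and only if'' still reads correctly. Likewise, ``type $\mu$'' refers to the notion of type defined above, which is automatically a partition for Littlewood--Richardson tableaux, so there is no mismatch with $\mu$ being a partition.

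I do not expect any obstacle here: all of the content sits in the Littlewood--Richardson rule itself, which is quoted from Stanley's book~\cite{Stanley:EC2}, and the proposition merely translates it into the ``constituent'' vocabulary set up at the start of this section. If anything, the only point worth a sentence is consistency of conventions about the skew shape $\nu/\lambda$ (which forces $\lambda\pleq\nu$), and this dovetails with the Schur analogue of Corollary~(\ref{cor:W(Bool)-comblem2}).
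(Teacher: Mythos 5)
Your argument is correct and is essentially the paper's: the authors state the Littlewood--Richardson rule~(\ref{eq:WSch(Bool)-lr}) with $c_{\lambda\mu}^\nu$ defined as a tableau count, and then simply record the proposition as ``clear,'' which is exactly the nonemptiness-of-a-finite-set observation you spell out. Your extra remarks about the $\lambda\pnleq\nu$ convention and the type being a partition are harmless elaborations, not a different route.
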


A corollary analogous to  (\ref{cor:W(Bool)-comblem2}) results:

\begin{corollary}\label{cor:Wsch(Bool)-comblem2}
Let $\lambda$ and $\mu$ be two partitions. Then, every Schur constituent $\nu$ of $s_{\lambda} s_{\mu}$ satisfies $\lambda\pleq \nu$ and $\mu\pleq \nu$.
\end{corollary}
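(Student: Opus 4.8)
The plan is to deduce everything from Proposition~(\ref{pro:Wsch(Bool)-comblem1}) together with the commutativity of multiplication of symmetric functions, mirroring the proof of Corollary~(\ref{cor:W(Bool)-comblem2}) step for step.

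First I would fix a Schur constituent $\nu$ of $s_\lambda s_\mu$. By Proposition~(\ref{pro:Wsch(Bool)-comblem1}) there is a Littlewood--Richardson tableau of shape $\nu/\lambda$ and type $\mu$; in particular $\nu/\lambda$ is a genuine skew diagram, which forces $\lambda_i\leq\nu_i$ for all $i$, that is, $\lambda\pleq\nu$. (Equivalently, $c_{\lambda\mu}^{\nu}\neq 0$, and by the convention adopted above $c_{\lambda\mu}^{\nu}=0$ whenever $\lambda\pnleq\nu$, so "being a Schur constituent" already carries the containment $\lambda\pleq\nu$.) For the second inequality I would use that $\Lambda_\bZ$ is commutative, so $s_\lambda s_\mu=s_\mu s_\lambda$ and hence $\nu$ is also a Schur constituent of $s_\mu s_\lambda$. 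Applying Proposition~(\ref{pro:Wsch(Bool)-comblem1}) again, now with the roles of $\lambda$ and $\mu$ interchanged, produces a Littlewood--Richardson tableau of shape $\nu/\mu$ and type $\lambda$, whence $\mu\pleq\nu$ for exactly the same reason. (In terms of coefficients this is just the symmetry $c_{\lambda\mu}^{\nu}=c_{\mu\lambda}^{\nu}$, which in this direction is immediate from comparing the two expansions of $s_\lambda s_\mu$ via~(\ref{eq:WSch(Bool)-lr}) and the linear independence of the $s_\nu$.)

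I do not expect any real obstacle: the only ingredient beyond Proposition~(\ref{pro:Wsch(Bool)-comblem1}) is commutativity of the product of symmetric functions, and the rest is formal. The single point that requires care is the convention that $c_{\lambda\mu}^{\nu}$ is set to $0$ when $\lambda\pnleq\nu$, so that a nonzero Littlewood--Richardson coefficient automatically encodes the containment of shapes; this is precisely what makes the argument as short as that of Corollary~(\ref{cor:W(Bool)-comblem2}).
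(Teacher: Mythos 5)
Your proof is correct and essentially identical to the paper's: apply Proposition~(\ref{pro:Wsch(Bool)-comblem1}) to obtain a Littlewood--Richardson tableau of shape $\nu/\lambda$ and type $\mu$, deduce $\lambda\pleq\nu$ from the existence of that skew shape, and conclude $\mu\pleq\nu$ by symmetry. You have merely spelled out the commutativity argument that the paper compresses into the word ``Similarly.''
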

\begin{proof}
Let $\nu$ be a Schur constituent of $s_{\lambda} s_{\mu}$. Due to (\ref{pro:Wsch(Bool)-comblem1}), there exists a Littlewood--Richardson tableau of shape $\nu / \lambda$ and type $\mu$. As a consequence, the shape $\nu / \lambda$ must be well-defined, so that $\lambda\pleq \nu$. Similarly, $\mu\pleq \nu$.
\end{proof}

There is also a counterpart of (\ref{pro:W(Bool)-comblem3}) in the Schur function
setting:

\begin{proposition}\label{pro:Wsch(Bool)-comblem3}
Let $\lambda$ and $\mu$ be two partitions, and let $i$ and $j$ be two positive integers such that $\lambda_i > \mu_i$, $\lambda_i > \lambda_{i+1}$, $\mu_j > \lambda_j$, and $\mu_j > \mu_{j+1}$. Let $\bar{\lambda}$ denote the result of diminishing the $i$-th part of $\lambda$ by $1$, and let $\bar{\mu}$ be the result of diminishing the $j$-th part of $\mu$ by $1$. Then, for every Schur constituent $\nu$ of $s_{\bar{\lambda}} s_{\bar{\mu}}$, we have either $\lambda\pleq\nu$ or $\mu\pleq\nu$.
\end{proposition}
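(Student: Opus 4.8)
The plan is to prove this Schur analogue by mimicking the proof of the monomial version in~(\ref{pro:W(Bool)-comblem3}) as closely as possible, replacing the ``combining rows'' picture coming from~(\ref{pro:W(Bool)-comblem1}) by the Littlewood--Richardson tableau picture coming from~(\ref{pro:Wsch(Bool)-comblem1}). As in the monomial case, from $\lambda_i>\mu_i$ and $\mu_j>\lambda_j$ we get $i\neq j$, and by symmetry we may assume $i<j$. Fix a Schur constituent $\nu$ of $s_{\bar\lambda}s_{\bar\mu}$, so by~(\ref{pro:Wsch(Bool)-comblem1}) there is a Littlewood--Richardson tableau $T$ of shape $\nu/\bar\lambda$ and type $\bar\mu$; by~(\ref{cor:Wsch(Bool)-comblem2}) we already know $\bar\lambda\pleq\nu$ and $\bar\mu\pleq\nu$, so it will suffice to prove $\nu_i\geq\lambda_i$ (which together with $\bar\lambda\pleq\nu$ gives $\lambda\pleq\nu$) or $\nu_j\geq\mu_j$ (which together with $\bar\mu\pleq\nu$ gives $\mu\pleq\nu$).

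The key step is to find the analogue of the case distinction ``the $i$-th row of $\bar\lambda$ is matched with a nonempty vs.\ empty row of $\bar\mu$''. In the Littlewood--Richardson setting, the natural replacement is to look at row $i$ of $\nu$: either $\nu_i>\bar\lambda_i$, i.e.\ the skew shape $\nu/\bar\lambda$ has at least one box in row $i$, or $\nu_i=\bar\lambda_i$, i.e.\ row $i$ of $T$ is empty. In the first case, $\nu_i>\bar\lambda_i=\lambda_i-1$ forces $\nu_i\geq\lambda_i$, and we are done as above. In the second case, row $i$ of $T$ is empty, and I want to extract the inequality $\nu_j\geq\mu_j$. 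Here is where the lattice-permutation condition on the reverse reading word does the work: since rows $1,\dots,i$ of $\bar\lambda$ have lengths $\bar\lambda_1\geq\cdots\geq\bar\lambda_{i-1}\geq\bar\lambda_i=\mu_i-1\wait$ — more carefully, rows $1,\dots,j-1$ of $\nu$ all have length $\geq\bar\mu_{j-1}=\mu_{j-1}\geq\mu_j$ by $\bar\mu\pleq\nu$, so it remains to produce one further row of $\nu$, the $j$-th, of length $\geq\mu_j$. The emptiness of row $i$ of $T$, combined with the fact that $T$ has type $\bar\mu$ and its reverse reading word is a lattice permutation, should force enough entries equal to $1,2,\dots$ to appear in rows $\le i-1$ and $\ge i+1$ in a way that pushes the length of some row of $\nu$ up to $\mu_j$; concretely, one argues that because row $i$ of $T$ contributes nothing, the boxes of $T$ labelled $1$ (there are $\bar\mu_1=\mu_1\geq\mu_j$ of them, since $j\ge 1$... and more relevantly the boxes labelled with small values) are squeezed into rows $1,\dots,i-1$ of $T$, and by the lattice condition each label $k\le$ (something) appears in those top $i-1$ rows, from which a counting argument on $\bar\mu$ versus $\bar\lambda$ yields $\nu_j\ge\mu_j$.

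I expect the main obstacle to be exactly this second case: translating ``the $i$-th row is skipped'' into a clean numerical inequality for $\nu_j$ via the lattice-permutation property. In the monomial proof this was almost trivial (a zero-length row of $\bar\mu$ still leaves $\bar\lambda_i\geq\mu_i\geq\mu_j$ boxes), but the Littlewood--Richardson combinatorics is more rigid, so I would either (a) argue directly with the reverse reading word, tracking how many of each small label must occur above row $i$, or (b) invoke the symmetry $c^{\nu}_{\bar\lambda\bar\mu}=c^{\nu}_{\bar\mu\bar\lambda}$ to instead read a Littlewood--Richardson tableau of shape $\nu/\bar\mu$ and type $\bar\lambda$, and run the dual case analysis on row $j$ of $\nu$ versus $\bar\mu_j$. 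Option (b) may be the more symmetric and painless route, since the two hypotheses on $(\lambda,i)$ and $(\mu,j)$ are themselves symmetric, and one of the two readings is likely to make the desired inequality transparent. Everything else — the reduction to $\nu_i\ge\lambda_i$ or $\nu_j\ge\mu_j$, and the use of~(\ref{cor:Wsch(Bool)-comblem2}) — is routine and parallels the monomial argument verbatim.
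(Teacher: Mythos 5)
Your setup agrees with the paper's: reduce to showing $\nu_i\geq\lambda_i$ or $\nu_j\geq\mu_j$, split on whether row $i$ of $\nu/\bar\lambda$ is empty, and dispatch the nonempty case immediately. But the hard case --- row $i$ empty --- is not actually proved; you name the obstacle and gesture at two possible routes without carrying either one out, and neither sketch as stated is convincing.

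The paper's resolution of the hard case is a concrete tableau manipulation you don't mention. Since row $i$ of $\nu/\bar\lambda$ is empty, one can delete that empty row from $T$ and shift every lower row up by one; because rows $\geq i+1$ of the skew shape sit in columns $\leq\bar\lambda_i$ while rows $\leq i-1$ sit in columns $>\bar\lambda_{i-1}\geq\bar\lambda_i$, this shift creates no column conflicts, and the reverse reading word is unchanged. The result is a Littlewood--Richardson tableau of shape $\nu_{\setminus i}/\bar\lambda_{\setminus i}$ and type $\bar\mu$, where $\gamma_{\setminus i}$ means $\gamma$ with its $i$-th part removed. Applying (\ref{cor:Wsch(Bool)-comblem2}) to \emph{this} tableau yields $\bar\mu\pleq\nu_{\setminus i}$, which is the crucial strengthening of the inequality $\bar\mu\pleq\nu$ you already had: now $\nu_{\setminus i}$ has at least $j-1$ parts $\geq\mu_j$, and re-inserting $\nu_i\geq\bar\lambda_i\geq\mu_i\geq\mu_j$ gives $j$ parts of $\nu$ that are $\geq\mu_j$, i.e.\ $\nu_j\geq\mu_j$.

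Why your alternatives fall short as written: knowing only $\bar\mu\pleq\nu$ gives $\nu_j\geq\bar\mu_j=\mu_j-1$, one short of what is needed, and rows $1,\dots,j-1$ of $\nu$ already include row $i$, so they cannot supply the missing $j$-th large part; some additional structural input is required, which is precisely what the row-deletion produces. Your option~(b) --- invoking $c^\nu_{\bar\lambda\bar\mu}=c^\nu_{\bar\mu\bar\lambda}$ and running the case analysis on row $j$ of a tableau of shape $\nu/\bar\mu$ --- does not close the argument either: the emptiness of row $i$ of one tableau gives no control over row $j$ of a different tableau, and a size count does not rule out both rows being empty simultaneously. Your option~(a) is the direction the paper takes, but it must be made precise via the delete-and-shift construction (or an equivalent lattice-word bookkeeping), which is the idea your proposal is missing.
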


\begin{proof}
Since $\lambda_i > \mu_i$ and $\mu_j > \lambda_j$, we have $i\neq j$. Thus, we can assume by symmetry that $i<j$.

For every partition $\gamma$, let $\gamma_{\setminus i}$ denote the result of removing the $i$-th
part from the partition $\gamma$. By the definition of $\bar{\lambda}$, we have
$\bar{\lambda}_{\setminus i} = \lambda_{\setminus i}$.

Fix a Schur constituent $\nu$ of $s_{\bar{\lambda}} s_{\bar{\mu}}$. Then, by 
(\ref{pro:Wsch(Bool)-comblem1}), there exists a Littlewood--Richardson tableau of shape $\nu /
\bar{\lambda}$ and type $\bar{\mu}$. Denote this tableau by $T$. Since $T$ exists, we have
$\bar{\lambda} \pleq \nu$. In particular, $\nu_i \geq \bar{\lambda}_i \geq \mu_i \geq \mu_j$ (since
$i < j$).

First suppose that the $i$-th row of $\nu / \bar{\lambda}$ is nonempty. Then we have $\nu_i \geq
\bar{\lambda}_i + 1 = \lambda_i$. Since $\nu$ is a Schur constituent of $s_{\bar{\lambda}}
s_{\bar{\mu}}$, we have by  (\ref{cor:Wsch(Bool)-comblem2}) that $\bar{\lambda} \pleq
\nu$. Therefore we have $\lambda\pleq\nu$, which finishes the proof in this case.

Now suppose that the $i$-th row of $\nu / \bar{\lambda}$ is empty. In this case, we will show $\mu
\pleq \nu$. Since we have $\bar{\mu} \pleq \nu$, as above, it is enough to show $\nu_j\geq \mu_j$.

We can obtain a new tableau $T'$ from the tableau $T$ by removing its (empty) $i$-th row and then
moving each row below the $i$-th one up by one unit of length. This new tableau $T'$ is a
Littlewood--Richardson tableau of shape $\nu_{\setminus i} / \bar{\lambda}_{\setminus i}$ and type
$\bar{\mu}$ (since the original tableau $T$ was a Littlewood--Richardson tableau of shape $\nu /
\bar{\lambda}$ and type $\bar{\mu}$). Hence, there exists a Littlewood--Richardson tableau of shape
$\nu_{\setminus i} / \bar{\lambda}_{\setminus i}$ and type $\bar{\mu}$. By
(\ref{pro:Wsch(Bool)-comblem1}) (applied to $\bar{\lambda}_{\setminus i}$, $\bar{\mu}$ and
$\nu_{\setminus i}$ instead of $\lambda$, $\mu$ and $\nu$), this yields that $\nu_{\setminus i}$ is
a Schur constituent of the product $s_{\bar{\lambda}_{\setminus i}} s_{\bar{\mu}}$. By
(\ref{cor:Wsch(Bool)-comblem2}) (again applied to $\bar{\lambda}_{\setminus i}$, $\bar{\mu}$ and
$\nu_{\setminus i}$ instead of $\lambda$, $\mu$ and $\nu$), this yields that
$\bar{\lambda}_{\setminus i} \pleq \nu_{\setminus i}$ and $\bar{\mu} \pleq \nu_{\setminus i}$. But
the partition $\bar{\mu}$ has at least $j-1$ parts at least $\mu_j$ (namely, its first
$j-1$ parts, which as we know are unchanged from $\mu$). Since $\bar{\mu} \pleq \nu_{\setminus i}$,
this implies that the partition $\nu_{\setminus i}$ also has at least $j-1$ parts at least
$\mu_j$. Hence, the partition $\nu$ has at least $j$ parts at least $\mu_j$ (because
it contains all the parts of $\nu_{\setminus i}$, along with $\nu_i$ which as we know is at least
$\mu_j$). In other words, $\nu_j\geq \mu_j$.
\end{proof}

We will need two more facts about Schur constituents. The following is an analogue of
(\ref{pro:W(Bool)-lr-extremals}). It will also be used only in section~\ref{sec:countability}.

\begin{proposition}\label{pro:Wsch(Bool)-lr-extremals}
Let $\lambda$ and $\mu$ be two partitions.
	\begin{enumerate}
		\item $\lambda + \mu$ is a Schur constituent of $s_\lambda s_\mu$.
		\item $\lambda \uplus \mu$ is a Schur constituent of $s_\lambda s_\mu$,
			where $\uplus$ is as in~(\ref{pro:W(Bool)-lr-extremals}).
		\item If $\lambda$ and $\mu$ are both nonzero, then $s_\lambda s_\mu$ has at least two distinct nonzero Schur constituents.
	\end{enumerate}
\end{proposition}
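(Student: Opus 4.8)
The plan is to prove the three parts of Proposition~\ref{pro:Wsch(Bool)-lr-extremals} by exhibiting explicit Littlewood--Richardson tableaux of the required shapes and types, exactly paralleling the proof of Proposition~\ref{pro:W(Bool)-lr-extremals} but now via (\ref{pro:Wsch(Bool)-comblem1}) instead of (\ref{pro:W(Bool)-comblem1}). For part (1), I would consider the skew shape $(\lambda+\mu)/\lambda$ and fill each box in its $k$-th row (for $k=1,2,3,\dots$) with the integer $k$. This uses exactly $\mu_k$ copies of $k$, so the type is $\mu$; the filling is visibly semistandard (rows constant, hence weakly increasing; columns strictly increasing since the row index strictly increases down a column); and the reverse reading word is $1^{\mu_1}2^{\mu_2}3^{\mu_3}\cdots$, which is a lattice permutation because after reading all the $i$'s one has already counted $\mu_i\geq\mu_{i+1}$ of them before any $i+1$ appears. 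Hence $c_{\lambda\mu}^{\lambda+\mu}\geq 1$, so $\lambda+\mu$ is a Schur constituent.

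For part (2), recall $\lambda\uplus\mu$ has length $\ell(\lambda)+\ell(\mu)$, and as a multiset of parts it is the disjoint union of the parts of $\lambda$ and of $\mu$. I would take the skew shape $(\lambda\uplus\mu)/\lambda$: since $\lambda\pleq\lambda\uplus\mu$ (the sorted union dominates $\lambda$ rowwise), this shape is well-defined, and it consists of $\ell(\mu)$ nonempty rows whose lengths are, in some order, the parts of $\mu$, together with possibly some empty rows interleaved among the rows belonging to $\lambda$. The cleanest choice is to arrange things so that the nonempty skew rows, read from top to bottom, have lengths $\mu_1\geq\mu_2\geq\cdots$ in that order --- this is possible because among the rows of $\lambda\uplus\mu$, one may always choose the realization in which the $\mu$-rows appear in weakly decreasing order from the top (concretely, put $\mu_1$ in row $1$, and each subsequent $\mu_k$ as high as the sorting permits). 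Then fill the $k$-th nonempty skew row entirely with $k$. Again the type is $\mu$, the tableau is semistandard (each skew row sits strictly below the previous, and the column-strictness needs only that a box labelled $k$ never sits weakly above a box labelled $k'>k$ in the same column, which holds since the row containing the $k$'s is above that containing the $k'$'s), and the reverse reading word is once more $1^{\mu_1}2^{\mu_2}\cdots$, a lattice permutation. The point requiring a little care here is checking that no two skew boxes in the same column receive labels in the wrong order; I expect this to be the main obstacle, and it is handled by the observation that, with the chosen realization, all boxes labelled $k$ lie in a single row which is strictly above all boxes labelled $k+1,k+2,\dots$. (If an entirely transparent argument is wanted one may instead invoke $s_\lambda s_\mu = s_\mu s_\lambda$ together with part (1) in a suitable iterated form, but the tableau construction is the honest mirror of (\ref{pro:W(Bool)-lr-extremals}).)

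Part (3) is then immediate: if $\lambda$ and $\mu$ are both nonzero, then $\ell(\lambda+\mu)=\max\{\ell(\lambda),\ell(\mu)\}$ while $\ell(\lambda\uplus\mu)=\ell(\lambda)+\ell(\mu)$, and these are unequal since the smaller of $\ell(\lambda),\ell(\mu)$ is at least $1$; hence $\lambda+\mu$ and $\lambda\uplus\mu$ are distinct partitions. Both are nonzero (they have positive length), and by parts (1) and (2) both are Schur constituents of $s_\lambda s_\mu$. So $s_\lambda s_\mu$ has at least two distinct nonzero Schur constituents, which is the claim. The entire argument is short, and its only non-formal ingredient is the lattice-permutation check, which is disposed of by the uniformity of the fillings (row $k$ gets all $k$'s).
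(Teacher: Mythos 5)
Your parts (1) and (3) match the paper's proof; the issue is part (2).

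The direct tableau construction you propose for part (2) has a genuine gap. You claim that the skew shape $(\lambda\uplus\mu)/\lambda$ ``consists of $\ell(\mu)$ nonempty rows whose lengths are, in some order, the parts of $\mu$,'' and that one may ``choose a realization'' so that those rows come in weakly decreasing order. Both claims are false: the skew shape $(\lambda\uplus\mu)/\lambda$ is completely determined by the partitions $\lambda$ and $\lambda\uplus\mu$, and its row lengths are $(\lambda\uplus\mu)_i - \lambda_i$, which need not coincide with the parts of $\mu$ at all. Concretely, take $\lambda=(3,1)$ and $\mu=(2)$. Then $\lambda\uplus\mu=(3,2,1)$, and $(\lambda\uplus\mu)/\lambda$ has row lengths $(0,1,1)$: two nonempty rows of length $1$ each, even though $\ell(\mu)=1$ and the unique part of $\mu$ is $2$. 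Your recipe ``fill the $k$-th nonempty skew row with $k$'' would here produce a tableau of type $(1,1)$, not $(2)$, so it is simply not a Littlewood--Richardson tableau of type $\mu$. You have conflated the multiset decomposition of the rows of $\lambda\uplus\mu$ into $\lambda$-rows and $\mu$-rows with the set-theoretic skew diagram, and these are not the same. The fallback you mention, $s_\lambda s_\mu=s_\mu s_\lambda$, also does not rescue the argument: commutativity interchanges $\lambda$ and $\mu$ but does not transform the $\uplus$ claim into the $+$ claim.

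The paper avoids this entirely. Its proof of part (2) applies the involution $\omega$ with $\omega(s_\gamma)=s_{\gamma'}$, together with the identity $(\lambda\uplus\mu)'=\lambda'+\mu'$, reducing part (2) to part (1). This is both shorter and correct, and is the natural move here because $\uplus$ and $+$ are conjugate operations on partitions. If you insist on a direct tableau for $(\lambda\uplus\mu)/\lambda$, you would need a much more careful filling (the desired LR tableau exists, but it is not row-constant in general), so the conjugation argument is the route to take.
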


\begin{proof}
(1): By (\ref{pro:Wsch(Bool)-comblem1}), it is enough to show that there exists a
Littlewood--Richardson tableau of shape $\left(\lambda + \mu\right) / \lambda$ and type $\mu$. Such
a tableau can be made by filling the $i$-th row of the diagram $\left(\lambda + \mu\right) /
\lambda$ with the number $i$ for every positive integer $i$.

(2): For any partition $\gamma$, let $\gamma'$ denote the conjugate partition. Since there is a
$\bZ$-algebra isomorphism $\omega : \Lambda_{\bZ}\to\Lambda_{\bZ}$ satisfying
$\omega\left(s_\gamma\right) = s_{\gamma'}$ for every partition $\gamma$, it is enough to check
that $\left(\lambda \uplus \mu\right)'$ is a Schur constituent of $s_{\lambda'} s_{\mu'}$. But
since we have $\lambda' + \mu' = \left(\lambda \uplus \mu\right)'$, we are done by part (1).

(3): Since $\lambda$ and $\mu$ are nonzero, $\lambda+\mu$ and $\lambda\uplus\mu$ are
distinct and nonzero. By parts (1) and (2), they are also Schur constituents of $s_\lambda s_\mu$.
\end{proof}

\subsection{Remark} When $\lambda$ and $\mu$ are two partitions, the two partitions $\lambda + \mu$ and $\lambda \uplus \mu$ are, respectively, the highest and the lowest Schur constituents of $s_\lambda s_\mu$ with respect to the so-called dominance order on partitions (also known as the majorization order)---the order in which a partition $\gamma$ is at least a partition $\delta$ of the same size if and only if every positive integer $i$ satisfies $\gamma_1+\gamma_2+...+\gamma_i \geq \delta_1+\delta_2+...+\delta_i$. 

\begin{proposition}\label{pro:Wsch(Bool)-comblem4}
Let $\lambda$ and $\mu$ be two partitions. Let $x$ be a positive integer. Let $\nu$ denote the partition obtained by sorting the
components of the vector
	$$
	(\lambda_1+\mu_1,\dots,\lambda_{x}+\mu_x,\lambda_{x+1},\mu_{x+1},\lambda_{x+2},\mu_{x+2},\dots)
	$$
in weakly decreasing order. Then $\nu$ is a Schur constituent of $s_\lambda s_\mu$.
\end{proposition}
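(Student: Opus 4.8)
The plan is to reduce this to the Littlewood--Richardson rule by exhibiting an explicit Littlewood--Richardson tableau of shape $\nu/\lambda$ and type $\mu$, in the spirit of the proof of part (1) of (\ref{pro:Wsch(Bool)-lr-extremals}). Here $\nu$ is the sorted rearrangement of the vector obtained from $\lambda+\mu$ by ``unmerging'' the rows past the $x$-th: in the first $x$ rows we keep $\lambda_k+\mu_k$, and from row $x+1$ on we interleave the individual parts $\lambda_k$ and $\mu_k$. Note first that, since $\lambda_{x+1}\leq\lambda_x$ and $\mu_{x+1}\leq\mu_x$ while $\lambda_k+\mu_k\geq\lambda_x$ for $k\le x$, the sorting only permutes the entries from position $x+1$ onward; the first $x$ parts of $\nu$ are exactly $\lambda_1+\mu_1,\dots,\lambda_x+\mu_x$, and in particular $\nu_i=\lambda_i$ for $i\le x$ whenever $\mu$ is trivial, but in general we only need $\lambda\pleq\nu$, which is clear since each $\nu$-part dominates the corresponding $\lambda$-part after sorting. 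So the skew shape $\nu/\lambda$ is well-defined.

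The key step is to fill $\nu/\lambda$ with a Littlewood--Richardson tableau of type $\mu$. For the first $x$ rows I would fill row $i$ (which has $\nu_i-\lambda_i=\mu_i$ boxes) entirely with the letter $i$, exactly as in (\ref{pro:Wsch(Bool)-lr-extremals})(1). For the rows past the $x$-th, the sorted sequence $(\lambda_{x+1},\mu_{x+1},\lambda_{x+2},\mu_{x+2},\dots)$ produces a skew shape whose rows of ``$\mu$-origin'' carry $\mu_{x+1},\mu_{x+2},\dots$ boxes and whose rows of ``$\lambda$-origin'' carry no skew boxes at all (since those parts of $\nu$ coincide with parts of $\lambda$, which we must realize as non-skew boxes). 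Thus the skew part living below row $x$ consists of $\ell(\mu)-x$ rows, of lengths $\mu_{x+1}\geq\mu_{x+2}\geq\cdots$, and I fill the row carrying $\mu_{x+k}$ boxes entirely with the letter $x+k$. Reading the reverse reading word top to bottom, we encounter $\mu_1$ copies of $1$, then $\mu_2$ copies of $2$, and so on (the empty rows contribute nothing and do not disturb the order), so the word is $1^{\mu_1}2^{\mu_2}3^{\mu_3}\cdots$, which is a lattice permutation, and the type is $\mu$. Semistandardness is immediate: each row is constant and weakly increasing, and columns are strictly increasing because a box in a lower row always carries a strictly larger letter than any box above it in the same column (the row indices are strictly increasing down the tableau, and we used the row index as the entry).

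The one point requiring care is verifying that a box of ``$\lambda$-origin'' in a row past the $x$-th genuinely falls inside $\lambda$ and hence is not a skew box, and dually that the $\mu$-origin boxes are all skew; this is where the hypothesis that we only interleave starting from index $x+1$ (rather than from the top) is used, together with the fact that sorting is monotone. Concretely, I would argue that after sorting, the multiset of $\nu$-parts is the disjoint union of $\{\lambda_1+\mu_1,\dots,\lambda_x+\mu_x\}$, $\{\lambda_{x+1},\lambda_{x+2},\dots\}$, and $\{\mu_{x+1},\mu_{x+2},\dots\}$, and one can choose the bijection realizing $\nu/\lambda$ so that the second block is matched with the parts $\lambda_{x+1},\lambda_{x+2},\dots$ of $\lambda$ (giving empty skew rows) and the first and third blocks supply all $|\mu|$ skew boxes. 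Making this matching precise — essentially an interval-shuffling bookkeeping argument — is the only mildly delicate part; once it is in place, the tableau above works and (\ref{pro:Wsch(Bool)-comblem1}) finishes the proof. This mirrors, on the Schur side, the monomial computation used in the proof of (\ref{pro:W(Bool)-primeids}).
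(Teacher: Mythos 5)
Your overall strategy coincides with the paper's: exhibit a Littlewood--Richardson tableau of shape $\nu/\lambda$ and type $\mu$ and invoke (\ref{pro:Wsch(Bool)-comblem1}). But the step you flag as ``mildly delicate'' is in fact false, and the tableau you describe does not exist. The claim that the rows of $\nu$ past the $x$-th carrying a $\lambda$-origin value have no skew boxes, while the $\mu$-origin rows carry exactly $\mu_{x+1},\mu_{x+2},\dots$ skew boxes, fails because a skew diagram $\nu/\lambda$ is determined by comparing $\nu_k$ with $\lambda_k$ \emph{position by position}, not by matching equal values across the two multisets; there is no ``choice of bijection realizing $\nu/\lambda$.'' Concretely, take $\lambda=(2,1)$, $\mu=(2,2)$, $x=1$. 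Then $\nu=(4,2,1)$, and $\nu/\lambda$ has one skew box in row $2$ (since $\nu_2=2>1=\lambda_2$) and one skew box in row $3$ (since $\nu_3=1>0=\lambda_3$); it does not have a $\mu$-origin row of two skew boxes and an empty $\lambda$-origin row. Sorting pushed the $\lambda$-value $\lambda_2=1$ from row $2$ down to row $3$, so it sits above $\lambda_3=0$ and thereby \emph{creates} a skew box. The constant-row filling you propose therefore gives the wrong type; in fact one can construct examples (e.g.\ $\lambda=(4,3,1)$, $\mu=(4,2,2)$, $x=1$) where the correct tableau cannot even have constant rows.

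The paper repairs exactly this in two ways. Its first construction fills cell $(i,j)$ with $i>x$ by the rule $x+\#\{u\in\{x+1,\dots,i\}:\lambda_u<j\}$, a row-dependent count that absorbs the position shifts produced by sorting. Its second construction splits $\nu$ as $\nu_{\leq x}=\lambda_{\leq x}+\mu_{\leq x}$ and $\nu_{>x}=\lambda_{>x}\uplus\mu_{>x}$, applies (\ref{pro:Wsch(Bool)-lr-extremals}) to each piece, then shifts the second tableau down by $x$ rows and increments its entries by $x$ before overlaying; the shift is precisely what reconciles the ``$\lambda$-origin'' boxes with their new positions. Either route requires genuinely more than bookkeeping, and your proof is incomplete without one of them.
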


\begin{proof}
According to  (\ref{pro:Wsch(Bool)-comblem1}), it is enough to show that there exists a Littlewood--Richardson tableau of shape $\nu / \lambda$ and type $\mu$. We are going to construct such a tableau $T$.

Indeed, we are going to give two ways to construct such a $T$ -- one explicit way which we will only sketch, and one less explicit one which we will detail.

\emph{First construction of $T$:} Let us first notice that $\nu_i = \lambda_i + \mu_i$ for every $i\in\left\lbrace 1,2,...,x\right\rbrace$, so that the $i$-th row of $\nu / \lambda$ has length $\mu_i$ for every $i\in\left\lbrace 1,2,...,x\right\rbrace$. Let us fill this row with $i$'s. Thus, the first $x$ rows of $\nu / \lambda$ are filled.

Now, for every cell $\left(i,j\right)$ of $\nu / \lambda$ with $i > x$, we fill the cell $\left(i,j\right)$ with $x$ plus the number of all $u \in \left\lbrace x+1,x+2,...,i \right\rbrace$ satisfying $\lambda_u < j$.

This gives us a tableau $T$ of shape $\nu / \lambda$, which the reader can verify to be a Littlewood--Richardson tableau of type $\mu$ (the verification is not trivial, but does not require any new ideas). This proves the existence of such a tableau and thus concludes the proof of  (\ref{pro:Wsch(Bool)-comblem4}).

\emph{Second construction of $T$:} For every partition $\gamma$ and any integer $y$, let $\gamma_{\leq y}$ denote the partition $\left(\gamma_1,\gamma_2,...,\gamma_y\right)$, and let $\gamma_{>y}$ denote the partition $\left(\gamma_{y+1},\gamma_{y+2},\dots\right)$. Clearly, $\nu_{\leq x} = \lambda_{\leq x} + \mu_{\leq x}$ and $\nu_{>x} = \lambda_{>x} \uplus \mu_{>x}$, where the operators $+$ and $\uplus$ on partitions are defined as in  (\ref{pro:Wsch(Bool)-lr-extremals}).

Since $\nu_{\leq x} = \lambda_{\leq x} + \mu_{\leq x}$, the result
(\ref{pro:Wsch(Bool)-lr-extremals})(1) (applied to $\lambda_{\leq x}$ and $\mu_{\leq x}$ instead of
$\lambda$ and $\mu$) implies that $\nu_{\leq x}$ is a Schur constituent of $s_{\lambda_{\leq x}}
s_{\mu_{\leq x}}$. Hence, by  (\ref{pro:Wsch(Bool)-comblem1}), there exists a
Littlewood--Richardson tableau of shape $\nu_{\leq x} / \lambda_{\leq x}$ and type $\mu_{\leq x}$.
Let $T_1$ be such a tableau.

Since $\nu_{>x} = \lambda_{>x} \uplus \mu_{>x}$, the result (\ref{pro:Wsch(Bool)-lr-extremals})(2) (applied to $\lambda_{>x}$ and $\mu_{>x}$ instead of $\lambda$ and $\mu$) implies that $\nu_{>x}$ is a Schur constituent of $s_{\lambda_{>x}} s_{\mu_{>x}}$. Hence, by  (\ref{pro:Wsch(Bool)-comblem1}), there exists a Littlewood--Richardson tableau of shape $\nu_{>x} / \lambda_{>x}$ and type $\mu_{>x}$. Let $T_2$ be such a tableau. Denote by $\widehat{T_2}$ the result of moving the tableau $T_2$ south (i.e., down) by $x$ rows and adding $x$ to each of its entries. It is easy to see that $\widehat{T_2}$ is a Littlewood--Richardson tableau of shape $\nu / \left(\nu_1,\nu_2,...,\nu_x,\lambda_{x+1},\lambda_{x+2},\lambda_{x+3},...\right)$. Hence, $\widehat{T_2}$ doesn't intersect $T_1$, and moreover, each cell of $\widehat{T_2}$ lies strictly southwest of each cell of $T_1$. Thus, it is rather clear that overlaying $T_1$ with $\widehat{T_2}$ gives us a Littlewood--Richardson tableau
of shape $\nu / \lambda$ and type $\mu$. This once again proves that such a tableau exists, and as we know this establishes  (\ref{pro:Wsch(Bool)-comblem4}).
\end{proof}

\begin{proposition}\label{pro:Wsch(Bool)-primeparid}
Let $2^P$ denote the set of subsets of $P$.
	\begin{enumerate}
		\item The map $f\mapsto \sker(f)$ is a bijection $\Mod{\bN}(\Sch,\Bool)\to 2^P$.
		\item $\sker(f)$ is a partition ideal if and only if $f$ satisfies
			$f(x)=0 \Rightarrow f(xy)=0$.
		\item \label{part:schur-prime-char}
			$\sker(f)$ is a Schur prime partition ideal if and only if $f$ is an $\bN$-algebra 
			map.
	\end{enumerate}
\end{proposition}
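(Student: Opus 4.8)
The plan is to mimic the proof of Proposition~(\ref{pro:W(Bool)-primeparid}) almost verbatim, replacing monomial symmetric functions and monomial constituents by Schur symmetric functions and Schur constituents throughout, and substituting the Littlewood--Richardson-based combinatorial lemmas (\ref{pro:Wsch(Bool)-comblem1})--(\ref{cor:Wsch(Bool)-comblem2}) for their monomial analogues.

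For part (1): linear maps $\Sch\to\Bool$ are in bijection with set maps $P\to\Bool$, since $(s_\lambda)_{\lambda\in P}$ is an $\bN$-basis of $\Sch$; and set maps $P\to\Bool$ are in bijection with subsets of $P$ via $f\mapsto\sker(f)$. For part (2): the key input is that for any partition $\lambda$, the product $s_1 s_\lambda=\sum_\mu s_\mu$, where $\mu$ runs over all partitions obtained by adding a single box to the Young diagram of $\lambda$ (this is the Pieri rule, a special case of~(\ref{eq:WSch(Bool)-lr})); so if $\lambda\in\sker(f)$ and $f$ satisfies $f(x)=0\Rightarrow f(xy)=0$, then $f(s_1 s_\lambda)=0$ forces $f(s_\mu)=0$ for all such $\mu$, and induction on $|\mu|$ gives that $\sker(f)$ is a partition ideal. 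Conversely, if $\sker(f)$ is a partition ideal and $\lambda\in\sker(f)$, then every Schur constituent $\nu$ of $s_\lambda s_\mu$ satisfies $\lambda\pleq\nu$ by~(\ref{cor:Wsch(Bool)-comblem2}), hence $f(s_\lambda s_\mu)=0$; expanding arbitrary $x,y\in\Sch$ in the Schur basis and using bilinearity then yields the implication $f(x)=0\Rightarrow f(xy)=0$ exactly as in~(\ref{pro:W(Bool)-primeparid}).

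For part (3): under either hypothesis one first checks $\sker(f)\neq P$ and $f(1)=f(s_0)=1$ (if $f$ is an algebra map this is immediate; if $\sker(f)$ is a Schur prime ideal then $\sker(f)\neq P$ so some partition is not in it, and since by part (2) $\sker(f)$ is a partition ideal it does not contain $0$). If $f$ is an algebra map and every Schur constituent of $s_\lambda s_\mu$ lies in $\sker(f)$, then $f(s_\lambda)f(s_\mu)=f(s_\lambda s_\mu)=0$, so $\lambda$ or $\mu$ lies in $\sker(f)$, making it Schur prime. Conversely, if $\sker(f)$ is Schur prime, then by part (2) it suffices to show $f(x)=f(y)=1\Rightarrow f(xy)=1$; pick a Schur constituent $\lambda$ of $x$ and $\mu$ of $y$ with $\lambda,\mu\notin\sker(f)$; Schur primality forces some Schur constituent of $s_\lambda s_\mu$ to lie outside $\sker(f)$, and that partition is also a Schur constituent of $xy$ (since all coefficients involved are in $\bN$, no cancellation occurs), whence $f(xy)=1$.

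I do not expect any real obstacle here: the only place where genuine symmetric-function input beyond the Littlewood--Richardson rule is needed is the Pieri rule $s_1 s_\lambda=\sum_{\mu}s_\mu$ used in part (2), which is entirely standard. The proof is essentially a translation of the monomial case, and the parallel structure was anticipated in the discussion following~(\ref{thm:intro-B}). One minor point worth stating explicitly is that positivity of the Littlewood--Richardson coefficients (all $c_{\lambda\mu}^\nu\in\bN$) is what guarantees a Schur constituent of $s_\lambda s_\mu$ remains a Schur constituent of any sum containing the term $s_\lambda s_\mu$ with nonzero $\bN$-coefficient; this is the analogue of the no-cancellation argument in~(\ref{pro:W(Bool)-primeparid})(3).
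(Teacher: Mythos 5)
Your proof is correct and takes essentially the same approach as the paper, which simply says to ``proceed as in~(\ref{pro:W(Bool)-primeparid}), but invoke~(\ref{pro:Wsch(Bool)-comblem1}) and~(\ref{cor:Wsch(Bool)-comblem2}) instead,'' and notes that the Pieri coefficients are all $1$. Your write-up spells out exactly that translation; the only cosmetic slip is in part~(3), where you cite part~(2) to conclude that a Schur prime partition ideal is a partition ideal, but this is already immediate from the definition of Schur primality and needs no appeal to part~(2).
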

\begin{proof}
Proceed as in (\ref{pro:W(Bool)-primeparid}), but invoke (\ref{pro:Wsch(Bool)-comblem1}) and 
(\ref{cor:Wsch(Bool)-comblem2}) instead of  (\ref{pro:W(Bool)-comblem1}) and  (\ref{cor:W(Bool)-comblem2}). (Notice that the $a_{\mu}$ in the proof of part (2) are now all equal to $1$ according to the Pieri rule.)
\end{proof}

It turns out that the Schur prime partition ideals are exactly the monomial prime partition ideals. Indeed, we have the following analogue of  (\ref{pro:W(Bool)-primeids}):

\begin{proposition}\label{pro:Wsch(Bool)-primeids}
For integers $x,y\in\bN$, consider the partition ideal
	$$
	I_{(x,y)}=\setof{\lambda\in P}{(y+1)^{x+1}\pleq \lambda}
	$$
as defined in  (\ref{pro:W(Bool)-primeids}).
Then $I_{(x,y)}$ is Schur prime. Conversely, every nonempty Schur prime partition ideal is of this form.
\end{proposition}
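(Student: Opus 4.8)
The plan is to mirror the proof of~(\ref{pro:W(Bool)-primeids}) step by step, replacing each monomial combinatorial input by its Schur counterpart: (\ref{pro:Wsch(Bool)-comblem1}) and~(\ref{pro:Wsch(Bool)-comblem4}) in place of~(\ref{pro:W(Bool)-comblem1}), (\ref{cor:Wsch(Bool)-comblem2}) in place of~(\ref{cor:W(Bool)-comblem2}), and~(\ref{pro:Wsch(Bool)-comblem3}) in place of~(\ref{pro:W(Bool)-comblem3}).

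First, to show $I_{(x,y)}$ is Schur prime, let $\lambda,\mu$ be partitions such that every Schur constituent of $s_\lambda s_\mu$ lies in $I_{(x,y)}$. By symmetry assume $\lambda_{x+1}\geq\mu_{x+1}$; we show $\lambda\in I_{(x,y)}$. Let $\nu$ be the partition obtained by sorting the vector $(\lambda_1+\mu_1,\dots,\lambda_x+\mu_x,\lambda_{x+1},\mu_{x+1},\lambda_{x+2},\mu_{x+2},\dots)$ into weakly decreasing order. The point is that $\nu$ is a Schur constituent of $s_\lambda s_\mu$: for $x\geq 1$ this is precisely~(\ref{pro:Wsch(Bool)-comblem4}), and for $x=0$ it is~(\ref{pro:Wsch(Bool)-lr-extremals})(2), since there $\nu=\lambda\uplus\mu$. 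Hence $\nu\in I_{(x,y)}$, so $\nu_{x+1}\geq y+1$; and since the first $x$ entries of the vector are each $\geq\lambda_{x+1}$ whereas all later entries are $\leq\lambda_{x+1}$ (using $\mu_{x+1}\leq\lambda_{x+1}$), the sort leaves the $(x+1)$-st coordinate in place, so $\lambda_{x+1}=\nu_{x+1}\geq y+1$ and $\lambda\in I_{(x,y)}$.

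For the converse, let $I$ be a nonempty Schur prime partition ideal. Arguing as in~(\ref{pro:W(Bool)-primeids}) but with~(\ref{pro:Wsch(Bool)-comblem3}): if $I$ had two distinct $\pleq$-minimal elements $\lambda,\mu$, choose indices $i$ with $\lambda_i>\mu_i$, $\lambda_i>\lambda_{i+1}$ and $j$ with $\mu_j>\lambda_j$, $\mu_j>\mu_{j+1}$, and let $\bar{\lambda},\bar{\mu}$ result from diminishing these parts; by~(\ref{pro:Wsch(Bool)-comblem3}) every Schur constituent $\nu$ of $s_{\bar{\lambda}}s_{\bar{\mu}}$ satisfies $\lambda\pleq\nu$ or $\mu\pleq\nu$ and hence lies in $I$, so Schur primality puts $\bar{\lambda}$ or $\bar{\mu}$ in $I$, contradicting minimality. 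Thus $I$ has a unique $\pleq$-minimal element $\pi$, and, descending $\pleq$-chains of partitions being finite, $I=\setof{\lambda\in P}{\pi\pleq\lambda}$. It remains to check $\pi=(y+1)^{x+1}$ for some $x,y\in\bN$; by the same purely combinatorial fact about partitions used in~(\ref{pro:W(Bool)-primeids}), this reduces to showing $\pi$ cannot be contained in the union of two partitions neither of which contains it. But if $\pi$ is contained in the union of $\lambda$ and $\mu$, then every Schur constituent $\nu$ of $s_\lambda s_\mu$ satisfies $\lambda\pleq\nu$ and $\mu\pleq\nu$ by~(\ref{cor:Wsch(Bool)-comblem2}), hence $\pi\pleq\nu$, so $\nu\in I$; Schur primality then gives $\lambda\in I$ or $\mu\in I$, i.e.\ $\pi\pleq\lambda$ or $\pi\pleq\mu$. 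Therefore $I=I_{(x,y)}$ with $(y+1)^{x+1}=\pi$.

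I expect no substantial obstacle of its own: all the combinatorial substance has already been isolated in~(\ref{pro:Wsch(Bool)-comblem3}) and, above all, in~(\ref{pro:Wsch(Bool)-comblem4}), whose Littlewood--Richardson tableau construction is the nontrivial Schur-side replacement for the trivial~(\ref{pro:W(Bool)-comblem1}). The only points needing a little attention are verifying that the sort in the first part really fixes the $(x+1)$-st coordinate, and dealing with the boundary value $x=0$, where~(\ref{pro:Wsch(Bool)-comblem4}) is not literally applicable and one falls back on~(\ref{pro:Wsch(Bool)-lr-extremals}). Together with part~(\ref{part:schur-prime-char}) of~(\ref{pro:Wsch(Bool)-primeparid}), this proposition also records the coincidence of Schur-prime and monomial-prime partition ideals: both are exactly the empty ideal together with the ideals $I_{(x,y)}$.
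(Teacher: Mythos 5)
Your proof is correct and follows precisely the paper's own proof, which simply instructs the reader to adapt the argument of (\ref{pro:W(Bool)-primeids}) using the Schur analogues (\ref{pro:Wsch(Bool)-comblem3}), (\ref{cor:Wsch(Bool)-comblem2}), and (\ref{pro:Wsch(Bool)-comblem4}). Your explicit treatment of the $x=0$ boundary case via (\ref{pro:Wsch(Bool)-lr-extremals})(2) is a welcome clarification, since (\ref{pro:Wsch(Bool)-comblem4}) is stated only for positive $x$ and the paper leaves this case implicit.
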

\begin{proof}
This proof proceeds by adapting the proof of  (\ref{pro:W(Bool)-primeids}). The changes that are required (like replacing ``prime'' by ``Schur prime'', and referring to  (\ref{pro:Wsch(Bool)-comblem3}) and  (\ref{cor:Wsch(Bool)-comblem2}) instead of  (\ref{pro:W(Bool)-comblem3}) and  (\ref{cor:W(Bool)-comblem2})) are almost all obvious; the only nontrivial change is to replace the reference to  (\ref{pro:W(Bool)-comblem1}) by a reference to  (\ref{pro:Wsch(Bool)-comblem4}).
\end{proof}

\begin{proposition}
\label{pro:Wsch(Bool)-arithmetic}
Let $\infty$ denote the element of $\Wsch(\Bool)$ satisfying $\infty(s_\lambda)=1$ for all 
partitions $\lambda$, and let $\anti{1}\in\Wsch(\Bool)$ be as in~(\ref{eq:anti}).
Then we have
\begin{enumerate}
	\item
		\label{part:Wsch(Bool)-addition}
		$\sker(x+\anti{1}y)=I_{(x,y)}$, for all $x,y\in\bN$
	\item $\infty+z=\infty$, for all $z\in\Wsch(\Bool)$
	\item $\infty\cdot z=\infty$, for all nonzero $z\in\Wsch(\Bool)$
	\item $s_\lambda(\infty)=\infty$, for all nonzero partitions $\lambda$.
\end{enumerate}
\end{proposition}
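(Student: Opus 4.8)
The plan is to treat the four parts in the order given, using the combinatorial results about Schur constituents already established. For part (1), one works as in the proof of (\ref{pro:W(Bool)-description})(2). The Witt vector $x+\anti{1}y$ is a sum of $x$ copies of $1$ and $y$ copies of the anti-Teichmüller lift $\anti{1}$, so its Schur partition kernel is determined by the coproduct $\Delta^+$ and the description of $\srp(1) = 1+t$ and $\srp(\anti{1}) = (1-t)^{-1}$. Concretely, a partition $\lambda$ lies in $\sker(x+\anti{1}y)$ if and only if, in every way of writing $\Delta^+{}^{(x+y-1)}(s_\lambda)$ as a sum of tensors $s_{\mu^{(1)}}\otimes\cdots\otimes s_{\mu^{(x)}}\otimes s_{\nu^{(1)}}\otimes\cdots\otimes s_{\nu^{(y)}}$, at least one of the first $x$ factors is killed by $1$ (i.e., some $\mu^{(k)}\not=\varnothing$, since $1(s_\mu)=0$ for $\mu\not=1^r$... more precisely $1(s_{1^r})=1$ and $1(s_\mu)=0$ otherwise, and $1(s_0)=1$), or one of the last $y$ factors is killed by $\anti{1}$ (i.e., some $\nu^{(l)}$ is not a single column $1^s$). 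The cleanest route, mirroring the monomial case, is to show directly that $(y+1)^{x+1}\in\sker(x+\anti{1}y)$ while $y^{x+1}$ and $(y+1)^x$ are not; combined with part (1) of (\ref{pro:Wsch(Bool)-primeparid}) and (\ref{pro:Wsch(Bool)-primeids}), which say the kernel must be either empty or some $I_{(x',y')}$, this pins down $\sker(x+\anti{1}y)=I_{(x,y)}$ exactly. For these three membership checks one expands $\Delta^+$ on a rectangular Schur function using the Littlewood–Richardson rule (the relevant coproduct terms $s_{\mu}\otimes s_{\nu}$ have $\mu,\nu$ with $\mu'+\nu'$ a rearrangement bounded inside a rectangle), much as the monomial computation used the explicit formula for $\Delta^+(m_\lambda)$.

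For part (2), I would invoke (\ref{pro:inequality-comparison}): since $\Sch$ has the property that $\eps^+(x)\le x$ for all $x$ (the additive co-unit sends a symmetric function to its constant term, which is dominated by the function itself in the difference preorder on $\Sch$), we get $\infty+z\wgeq\infty$. As $\infty$ has the empty partition kernel, it is the maximum element of $\Wsch(\Bool)$ under $\wleq$, so $\infty+z\wleq\infty$ as well, forcing $\infty+z=\infty$. This is exactly the argument used for (\ref{pro:W(Bool)-description})(4).

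For part (3), let $z\not=0$. Then $z$ is an $\bN$-algebra map $\Sch\to\Bool$ with some Schur constituent $\lambda\not=\varnothing$ not in $\sker(z)$ (if $z$ had every nonzero partition in its kernel it would be the zero map, using that $z(s_0)=1$ forces $z\ne0$ to be nonzero on something). For any partition $\mu$, I claim $\infty\cdot z$ is nonzero on $s_\mu$: the multiplication on $\Wsch(\Bool)$ is induced by $\Delta^\times$, and $\Delta^\times$ is injective (it has the retraction $\eps^\times$), so $\Delta^\times(s_\mu)$ is a nonempty sum of basic tensors $s_{\alpha}\otimes s_{\beta}$; since $\infty$ is nonzero on every $s_\alpha$, the pairing $(\infty\cdot z)(s_\mu)=\sum (\text{coeff})\,\infty(s_\alpha)z(s_\beta)$ reduces to whether $z$ is nonzero on at least one of the $s_\beta$ appearing. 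For this I would use that $\Delta^\times$ applied to $s_\mu$ contains, among its terms, $s_{\mu}\otimes s_{1^{|\mu|}}$-type contributions or more simply that plethysm/comultiplication structure lets one route a nonzero value of $z$ through — the precise statement is that for the specific $\mu$ one chooses, $z$ is nonzero on some constituent of $\Delta^\times(s_\mu)$ because $z$ is an algebra map nonzero on the generator $s_1$ (indeed $z(s_1)=z(e_1)=1$ whenever $z\not=0$, since $\srp(z)\in 1+t\Bool[[t]]$ has leading term $1$, and if $z(s_1)=0$ one can check $z=0$). Granting $z(s_1)=1$ and that $s_1\otimes s_1$ (or a power thereof) appears in the comultiplications linking $s_\mu$ to $s_1$, we conclude $(\infty\cdot z)(s_\mu)=1$ for all $\mu$, i.e. $\infty\cdot z=\infty$. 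Part (4) is then a special case of the plethysm computation in (\ref{pro:W(Bool)-description})(6) transported to the Schur side: $s_\lambda(\infty)$ evaluated at $s_\mu$ is $\infty(s_\mu\circ s_\lambda)$, and since $\lambda\not=\varnothing$ the plethysm $s_\mu\circ s_\lambda$ is a nonzero symmetric function, hence $\infty$ sends it to $1$; so $s_\lambda(\infty)=\infty$.

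The main obstacle is part (1): the coproduct $\Delta^+$ on Schur functions is governed by Littlewood–Richardson coefficients rather than by the transparent "interleaving of rows" description available for $m_\lambda$, so verifying the three membership statements $(y+1)^{x+1}\in\sker$, $y^{x+1}\notin\sker$, $(y+1)^x\notin\sker$ requires a little care with LR tableaux of rectangular skew shapes. The trick that makes it manageable is conjugation: $\omega(s_{a^b})=s_{b^a}$, so one can pass to a column-shaped rectangle where the relevant coproduct terms become transparent, and the non-membership statements reduce to exhibiting a single splitting $s_\varnothing\otimes s_{y^{x+1}}$ (respectively $s_{(y+1)^x}\otimes s_\varnothing$) witnessed by the obvious LR tableau, exactly as in the monomial proof. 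Part (3)'s reliance on $z(s_1)=1$ for $z\ne0$ is the second place one must be slightly careful, but it follows immediately from the shape $\srp(z)\in1+t\Bool[[t]]$ together with $e_1=s_1$.
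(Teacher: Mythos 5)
Your parts (1), (2), and (4) essentially reproduce the paper's argument. (A small exposition slip in part (1): you describe $1\in\Wsch(\Bool)$ as nonzero precisely on $s_{1^r}$, but that is the behavior of $\anti{1}$; the unit $1$ has $\sker(1)=I_{(1,0)}=\setof{\lambda}{\lambda_2\geq 1}$, so $1(s_\lambda)=1$ exactly when $\lambda$ is a single row $(r)$. This does not affect the ``cleanest route'' you go on to describe, which is exactly the paper's: reduce via (\ref{pro:Wsch(Bool)-primeparid}) and (\ref{pro:Wsch(Bool)-primeids}) to the three membership checks for $(y+1)^{x+1}$, $y^{x+1}$, $(y+1)^x$ and verify them with Littlewood--Richardson tableaux.)

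The genuine problem is in part (3), where you attempt a direct argument. You want to show $(\infty\cdot z)(s_\mu)\neq 0$ by finding some $s_\alpha\tn s_\beta$ in $\Delta^\times(s_\mu)$ with $z(s_\beta)\neq 0$, and you invoke $z(s_1)=1$ plus the appearance of ``$s_\mu\otimes s_{1^{|\mu|}}$-type contributions.'' But $\Delta^\times$ preserves degree in each tensor factor: every $s_\alpha\tn s_\beta$ occurring in $\Delta^\times(s_\mu)$ has $|\alpha|=|\beta|=|\mu|=:d$, so $s_1$ appears only when $d=1$, and the fact $z(s_1)=1$ gives no direct leverage. The natural degree-$d$ candidates for $\beta$ are $(d)$ (paired with $\alpha=\mu$, the trivial Kronecker coefficient) and $1^d$ (paired with $\alpha=\mu'$, the sign Kronecker coefficient), but a nonzero $z$ can kill either one of these for $d\geq 2$: take $z=1=(1,0)$ (which kills $s_{1^d}$ for $d\geq 2$) or $z=\anti{1}=(0,1)$ (which kills $s_{(d)}$ for $d\geq 2$). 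Your sketch as written therefore does not close. One can salvage it by noting that $\sker(z)$, being a proper prime partition ideal for $z\neq 0$, is either empty or $I_{(x,y)}$ with $(x,y)\neq(0,0)$, and no such ideal contains both $(d)$ and $1^d$ for the same $d\geq 1$ --- so one of the two candidate $\beta$'s always survives --- but that extra case analysis is missing. The paper avoids all of this via a short reduction: by distributivity and part (2), it suffices to verify $\infty\cdot z=\infty$ for the $\bN$-module generators $z=1,\anti{1},\infty$; the case $z=1$ is trivial, $z=\anti{1}$ follows because multiplication by $\anti{1}$ is an involution that preserves $\Wsch(\Bool)\setminus\{\infty\}$ by part (1) and hence fixes $\infty$, and $z=\infty$ uses exactly your injectivity-of-$\Delta^\times$ observation, which works with no hypothesis because both tensor slots are $\infty$.
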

\begin{proof}
(1): For clarity, given any $z\in \bN$, let us write $f_z\:\Sch\to\Bool$ for the image of $z$ 
under the unique semiring map $\bN\to\Wsch(\Bool)$. So if we let $I$ denote the partition ideal 
$\sker(f_x+\anti{1}f_y)$, we need to show $I=I_{(x,y)}$.

In the case where $y=0$, the argument is identical to that 
for~(\ref{pro:W(Bool)-description})(\ref{part:N-to-W(Bool)}).
On the other hand, if $x=0$, then we have $(\anti{1}f_y)(s_\lambda) = f_y(s_{\lambda'})$,
where $\lambda'$ is the conjugate partition of $\lambda$. (See for example~\cite{Borger:totalpos},~(6.11).)
And so this case follows from the previous one.

Now consider the general case.
By (\ref{pro:Wsch(Bool)-primeparid}) and (\ref{pro:Wsch(Bool)-primeids}), 
$I$ is either empty or of the form $I_{(x',y')}$. 
Therefore it is enough to show the three statements
	\begin{equation}
	\label{eq:local-23095}
	(y+1)^{x+1}\in I, \quad y^{x+1}\not\in I, \quad (y+1)^x\not\in I.
	\end{equation}
Since we have
	$$
	\Delta^+(s_\lambda)=\sum_{\mu,\nu}c^\lambda_{\mu\nu}s_\mu\tn s_\nu,
	$$
by the definition of addition of Witt vectors, we have
	$$
	(f_x+\anti{1}f_y)(s_\lambda) = \sum_{\mu,\nu}c^\lambda_{\mu\nu}\cdot 
	f_x(s_\mu)\cdot(\anti{1}f_y)(s_\nu).
	$$
Therefore a partition $\lambda$ is in $I$ if and only if we have 
$\mu\in\pker(f_x)$ whenever $c^\lambda_{\mu\nu}\geq 1$ and $\nu\not\in\pker(\anti{1}f_y)$.
Combining this with the two initial cases above we see that $\lambda$ is in $I$ if and only if 
the following implication holds:
	$$
	\big(c^\lambda_{\mu\nu}\geq 1 \text{ and } \nu_1\leq y\big) \Longrightarrow \mu_{x+1}\geq 1.
	$$
We will now use this criterion to show each of the three statements in~(\ref{eq:local-23095}).

First consider $\lambda=(y+1)^{x+1}$, and assume $c^\lambda_{\mu\nu}\geq 1$ and $\nu_1\leq y$. Then
there is a Littlewood--Richardson tableau of shape $\lambda/\nu$ and type $\mu$.
Since $\nu_1\leq y$, the diagram of
$\lambda/\nu$ contains the entire rightmost column of $\lambda$. Therefore all entries in that
column of the tableau must be distinct, and hence $\mu$ has length at least $x+1$.

Now consider $\lambda=y^{x+1}$, and set $\nu=\lambda$ and $\mu=0$. Then there is clearly a
Littlewood--Richardson tableau of shape $\lambda/\nu$ and type $\mu$, and further we have $\nu_1\leq y$ but
$\mu_{x+1}=0$. Therefore $y^{x+1}\not\in I$. Similarly, for $\lambda=(y+1)^x$, take $\nu=0$ and
$\mu=\lambda$. We have $\nu_1\leq y$, and there is clearly a Littlewood--Richardson tableau of shape
$\lambda/\nu=\lambda$ and type $\mu=\lambda$, but $\mu_{x+1}=0$. Therefore $(y+1)^x\not\in I$.

(2): Since $\infty+z\geq \infty$, we have $\infty+z\wgeq\infty$, by~(\ref{pro:inequality-comparison}), and 
hence $\infty+z=\infty$.

(3): By distributivity and parts (1) and (2), it is enough to check this for the $\bN$-module generators
$z=1,\anti{1},\infty$. It is clear for $z=1$. For $z=\anti{1}$, it holds because multiplication by
$\anti{1}$ is an involution which preserves $\Wsch(\Bool)\setminus\{\infty\}$, by part (1), and
hence fixes $\infty$. When $z=\infty$, the argument is that of
(\ref{pro:W(Bool)-description})(\ref{part:W(Bool)-infty}) but with the Schur basis instead of
the monomial one.

(4): By definition $\big(s_\lambda(\infty)\big)(s_\mu)=\infty(s_\mu\circ s_\lambda)$. So the
statement to be shown is equivalent to the statement $s_\mu\circ s_\lambda\neq 0$ for all partitions 
$\mu$. Since $\lambda\neq 0$, we can write $s_\lambda=y_1+y_2+\cdots$,
where each $y_i$ is a monomial in the formal variables $x_1,x_2,\dots$ with coefficient $1$. Therefore we have
$s_\mu\circ s_\lambda=s_\mu(y_1,y_2,\dots)$, but this is nonzero since all Schur functions are nonzero.
\end{proof}

\subsection{Explicit description of $\Wsch(\Bool)$}
\label{subsec:explicit-Wsch(Bool)}
Here we prove part~(\ref{part:Wsch(Bool)-semiring}) of theorem~(\ref{thm:intro-B}), although we use slightly different notation. It follows from~(\ref{pro:Wsch(Bool)-primeparid}), (\ref{pro:Wsch(Bool)-primeids}), and (\ref{pro:Wsch(Bool)-arithmetic})(\ref{part:Wsch(Bool)-addition}) that we have a bijection
	\begin{equation}
	\label{eq:Wsch(Bool)-explicit}
	\bN[\eta]/(\eta^2=1)\cup\{\infty\} \longisomap \Wsch(\Bool)
	\end{equation}
sending $x+y\eta$ to $x+\anti{1}y$ and sending $\infty$ to $\infty$. It is an isomorphism of $\bN$-algebras
if we give the left-hand side the $\bN$-algebra structure extending that on $\bN[\eta]/(\eta^2=1)$ and satisfying 
$\infty+z=\infty$ and $\infty\cdot z=\infty$, for $z\neq 0$. This follows 
from~(\ref{pro:Wsch(Bool)-arithmetic}) and the equality $\anti{1}^2=1$.
It is also clearly order preserving, as in~(\ref{pro:W(Bool)-description})(\ref{part:W(Bool)-order}).
The $\Sch$-semiring structure on $\bN[\eta]/(\eta^2=1)\cup\{\infty\}$ induced by this isomorphism
is the unique one satisfying $s_\lambda(\infty)=\infty$ and 
	$$
	s_\lambda(\eta) = 
		\begin{cases}
			\eta^r & \text{if }\lambda=(1^r) \\ 
			0 & \text{otherwise.}
		\end{cases}		
	$$

\begin{proposition}
\label{prop:Bool-map}
In terms of the bijections~(\ref{map:W(Bool)-coords}) and~(\ref{eq:Wsch(Bool)-explicit}), the map $W(\Bool)\to\Wsch(\Bool)$ sends any element of the form $(x,0)$ 
to $x=x+\anti{1}0$. It sends all other elements to $\infty$.	
\end{proposition}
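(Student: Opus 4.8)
The plan is to use the fact that the map $W(\Bool)\to\Wsch(\Bool)$ is simply restriction of semiring homomorphisms along the inclusion $\Sch\pleq\Lambda_\bN$: an element $a\in W(\Bool)=\Alg{\bN}(\Lambda_\bN,\Bool)$ is sent to $a|_\Sch$. By the bijectivity in~(\ref{pro:Wsch(Bool)-primeparid}), it then suffices to compute the Schur partition kernel $\sker(a|_\Sch)=\setof{\lambda\in P}{a(s_\lambda)=0}$ for $a$ ranging over the three types of element of $W(\Bool)$. I would compute $a(s_\lambda)$ via the expansion $s_\lambda=\sum_\mu K_{\lambda\mu}m_\mu$ into the monomial basis, with $K_{\lambda\mu}\in\bN$ the Kostka numbers (\cite{Macdonald:SF}, I.6). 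Only two facts about them are needed: $K_{\lambda\lambda}=1$, and $K_{\lambda\mu}\neq 0\Rightarrow\ell(\mu)\geq\ell(\lambda)$, where $\ell(\gamma)$ is the number of nonzero parts of $\gamma$; the second holds because a semistandard tableau of shape $\lambda$ and content $\mu$ has $\ell(\lambda)$ distinct, strictly increasing entries in its first column, all lying in $\{1,\dots,\ell(\mu)\}$. Since $1+1=1$ in $\Bool$, applying $a$ to the expansion gives the key reduction: $a(s_\lambda)=1$ if and only if $a(m_\mu)=1$ for some $\mu$ with $K_{\lambda\mu}\geq 1$.

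For an element $(x,0)$ I would argue as follows. Recall from~(\ref{subsec:explicit-W(Bool)}) that $(x,0)(m_\mu)=0$ precisely when $\mu_{x+1}\geq 1$, i.e.\ when $\ell(\mu)>x$. Combined with the reduction above: $(x,0)(s_\lambda)=1$ iff some $\mu$ with $K_{\lambda\mu}\geq 1$ has $\ell(\mu)\leq x$; here taking $\mu=\lambda$ handles the case $\ell(\lambda)\leq x$, while the length bound handles the converse, so $(x,0)(s_\lambda)=1$ iff $\ell(\lambda)\leq x$. Hence $\sker\big((x,0)|_\Sch\big)=\setof{\lambda}{\ell(\lambda)\geq x+1}=\setof{\lambda}{1^{x+1}\pleq\lambda}=I_{(x,0)}$. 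Since~(\ref{pro:Wsch(Bool)-arithmetic})(\ref{part:Wsch(Bool)-addition}) gives that the image of $x\in\bN$ in $\Wsch(\Bool)$ also has Schur partition kernel $I_{(x,0)}$, the injectivity in~(\ref{pro:Wsch(Bool)-primeparid}) forces $(x,0)\mapsto x=x+\anti{1}0$.

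For the remaining elements---the $(x,y)$ with $y\geq 1$ and the element $\infty$---I would show the image has empty Schur partition kernel, which under~(\ref{eq:Wsch(Bool)-explicit}) is exactly $\infty\in\Wsch(\Bool)$. Equivalently, I want $a(s_\lambda)=1$ for all $\lambda\in P$. For $\lambda=0$ this is just $a(1)=1$; for $\lambda\neq 0$, set $n=\left|\lambda\right|\geq 1$ and use $\mu=1^n$ in the reduction: $K_{\lambda,1^n}\geq 1$ (it counts the standard Young tableaux of shape $\lambda$), and $a(m_{1^n})=1$---for $a=\infty$ by definition of $\infty\in W(\Bool)$, and for $a=(x,y)$ with $y\geq 1$ because $(1^n)_{x+1}\leq 1\leq y<y+1$. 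I expect no genuine obstacle: the entire argument is the passage between monomial and Schur partition kernels, which is fully controlled by the unitriangularity of the Kostka matrix together with the elementary inequality $\ell(\mu)\geq\ell(\lambda)$. The only care required is to keep the direction of the implications straight when evaluating the Schur-to-monomial expansion in the Boolean semiring.
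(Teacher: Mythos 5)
Your proposal is correct, and its central combinatorial ingredient---the Kostka expansion $s_\lambda=\sum_\mu K_{\lambda\mu}m_\mu$ with the lower bound $K_{\lambda,1^{|\lambda|}}\geq 1$ so that $s_\lambda\geq m_{1^{|\lambda|}}$---is exactly what the paper uses to send all of the ``remaining'' elements to $\infty$. The one place where you diverge is the $(x,0)$ case: the paper simply observes that $(x,0)$ is the image of $x\in\bN$ under the canonical map $\bN\to W(\Bool)$ (by (\ref{pro:W(Bool)-description})(\ref{part:N-to-W(Bool)})) and then invokes functoriality of $\bN\to W(-)\to \Wsch(-)$, whereas you perform a direct kernel computation via $K_{\lambda\lambda}=1$ and the length inequality $K_{\lambda\mu}\neq 0\Rightarrow\ell(\mu)\geq\ell(\lambda)$. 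Both are valid; the paper's is slicker, and also avoids reproving (\ref{pro:Wsch(Bool)-arithmetic})(\ref{part:Wsch(Bool)-addition}), while yours is more self-contained at the level of symmetric-function combinatorics. A further small difference: the paper reduces the remaining cases to $(0,y)$, $y\geq 1$, and $\infty$ by appealing to the algebraic laws in Theorem (\ref{thm:intro-B}), while you handle all $(x,y)$ with $y\geq 1$ directly; that is a harmless simplification on your part, since the bound $\mu_{x+1}\leq 1\leq y$ works uniformly.
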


\begin{proof}
By (\ref{pro:W(Bool)-description})(\ref{part:N-to-W(Bool)}), the pair $(x,0)$ is the image of $x$ under the unique map $\bN\to W(\Bool)$; so it must map to $x$ in $\Wsch(\Bool)$. Therefore all that remains is to show that the pre-image of $\infty$ under the map $W(\Bool)\to\Wsch(\Bool)$ contains all other elements. By algebraic laws listed in theorem~(\ref{thm:intro-B}), it is enough to show the pre-image of $\infty$ contains $\infty$ and all elements of the form $(0,y)$, where $y\geq 1$.

Let $\lambda$ be a partition, and put $r=\sum_i \lambda_i$. Then it is easily seen that there is at least
one semistandard Young tableau of shape $\lambda$ and type $1^r$. So the Kostka number $K_{\lambda,1^r}$ is
at least $1$. Therefore we have $s_\lambda=m_{1^r}+\cdots \geq m_{1^r}$, and hence
	$
	(0,y)(s_\lambda)\geq (0,y)(m_{1^r})=1
	$
for any $y\geq 1$. Therefore any such $(0,y)$ maps to $\infty$ in $\Wsch(\Bool)$. Last, $\infty$ also maps
to $\infty$, because the map $\infty\:\Lambda_\bN\to\Bool$ is $1$ on all nonzero elements of $\Lambda_\bN$
and therefore on all Schur functions $s_\lambda$.
\end{proof}

\subsection{Remark}
Observe that while the map $W(A)\to\Wsch(A)$ is injective when $A$ has additive cancellation, it is not so in general. Indeed, this fails for $A=\Bool$.

\section{Total positivity}
\label{sec:total-pos}
In this section, we recall the relationship between the Schur Witt vectors and Schoenberg's theory of total positivity. It gives another technique for understanding Witt vectors of $\bRpl$ and hence of semirings that map to $\bRpl$. All details can be found in section 7 of~\cite{Borger:totalpos}.

\subsection{Total positivity}
As in the introduction, consider the map
	\begin{align}
	\begin{split}
		\label{map:ser++}
		\srp\: \Wsch(A) &\longmap 1+tA[[t]] \\
		a &\longmapsto 1+a(e_1)t+a(e_2)t^2+\cdots,
	\end{split}
	\end{align}
where the $e_n$ are the elementary symmetric functions
	$$
	e_n = \sum_{i_1<\dots< i_n} x_{i_1}\cdots x_{i_n} \in\Sch.
	$$
It is a bijection if $A$ is a ring and an injection if $A$ has additive cancellation.

Following Schoenberg~\cite{Schoenberg:Courant-birthday}, one says that a formal real series $f(t)=\sum_{i\in\bZ} a_i t^i$ is totally positive if all the minors of the infinite matrix $(a_{i-j})_{i,j}$ are $\geq 0$. The connection between total positivity and $\Wsch$ is given by two standard facts in the theory of symmetric functions, that $\Sch$ agrees with the $\bN$-linear span of the skew Schur functions and that the skew Schur functions are the minors of the matrix $(e_{i-j})_{i,j}$. It follows that for any subring $B\subseteq\bR$, the map (\ref{map:ser++}) restricts to a bijection
	$$
	\srp\:\Wsch(B\cap\bRpl) \longisomap \big\{\text{totally positive series in }1+tB[[t]]\big\}.
	$$
The classification of the totally positive series given by the Edrei--Thoma theorem then amounts to the following:

\begin{theorem}
\label{thm:Witt-E-T}
	The map $\srp\:W(\bR)\to 1+t\bR[[t]]$ restricts to a bijection
		\begin{equation}
			\label{map:E-T}
		\Wsch(\bRpl) \longisomap \bigsetof{e^{\gamma t}\frac{\prod_{i=1}^\infty(1+\alpha_i t)}{\prod_{i=1}^\infty(1-\beta_i t)}}{\alpha_i,\beta_i,\gamma\in\bRpl, \sum_i(\alpha_i+\beta_i)<\infty}.			
		\end{equation}
\end{theorem}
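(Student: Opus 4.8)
The plan is to obtain the stated bijection as the composite of two facts, neither of which requires new work. The first is recalled immediately before the statement: for $B=\bR$, the map $\srp$ restricts to a bijection from $\Wsch(\bRpl)$ onto the set of totally positive series lying in $1+t\bR[[t]]$ (because $\Sch$ is the $\bN$-linear span of the skew Schur functions and these are the minors of the Toeplitz-type matrix $(e_{i-j})_{i,j}$). The second is the Edrei--Thoma theorem, which identifies that set of totally positive series with the right-hand side of~(\ref{map:E-T}). Since $\srp$ is already a bijection $W(\bR)\longisomap 1+t\bR[[t]]$, its restriction to $\Wsch(\bRpl)\subseteq W(\bR)$ is automatically injective, so the whole problem reduces to computing the image, i.e.\ to matching ``totally positive'' with the explicit product form.

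For the inclusion of the product form into the set of totally positive series (the ``easy'' half), I would argue directly that every series $e^{\gamma t}\prod_i(1+\alpha_i t)/\prod_i(1-\beta_i t)$ with $\gamma,\alpha_i,\beta_i\in\bRpl$ and $\sum_i(\alpha_i+\beta_i)<\infty$ is totally positive. The finiteness condition guarantees that both infinite products converge coefficientwise in $1+t\bR[[t]]$ (the $t^n$-coefficients are bounded by $(\sum\alpha_i)^n$ and $(\sum\beta_i)^n$ respectively). Total positivity is then preserved by the relevant operations: the totally positive series in $1+t\bR[[t]]$ form a monoid under multiplication, since the Toeplitz matrix of a product is the product of the Toeplitz matrices and Cauchy--Binet keeps all minors $\geq0$; the three building blocks $1+\alpha t$, $(1-\beta t)^{-1}=\sum_{n\geq0}\beta^n t^n$ and $e^{\gamma t}=\sum_{n\geq0}\gamma^n t^n/n!$ are each classically totally positive for nonnegative parameters; and a coefficientwise limit of totally positive series is totally positive because each fixed minor depends continuously on finitely many coefficients. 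This shows the map lands in the stated set; one also records the (standard) uniqueness of the data, which identifies $\beta_i^{-1}$ and $\alpha_i^{-1}$ with the zeros of the denominator and numerator of the meromorphic continuation and recovers $\gamma$ from the exponential growth rate.

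For the reverse inclusion I would invoke the hard half of the Edrei--Thoma theorem: every totally positive formal power series in $1+t\bR[[t]]$ admits a factorization of the displayed form. This is the deep analytic input --- the structure theorem for P\'olya frequency sequences, combining the Aissen--Schoenberg--Whitney theorem with Edrei's theorem --- and I would cite it rather than reprove it (see section~7 of~\cite{Borger:totalpos} for exactly the form needed here). Composing the two bijections then gives Theorem~(\ref{thm:Witt-E-T}). The only genuine obstacle is this imported theorem; on the Witt-vector side everything is formal manipulation of the functor $\Wsch$ together with the elementary closure properties of total positivity listed above.
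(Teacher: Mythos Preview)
Your proposal is correct and matches the paper's treatment: the paper does not prove this theorem but presents it as a direct translation of the classical Edrei--Thoma classification of totally positive series into Witt-vector language, using the identification of $\Wsch(\bRpl)$ with totally positive series in $1+t\bR[[t]]$ explained just before the statement. You do the same, adding only a brief sketch of the easy direction (closure of total positivity under products and limits), which the paper omits entirely.
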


This was conjectured by Schoenberg~\cite{Schoenberg:Courant-birthday} and proved by Aissen--Schoenberg--Whitney~\cite{ASW} and, in the final step, Edrei~\cite{AESW}\cite{Edrei:totally-positive}. It was independently discovered and proved by Thoma in his work~\cite{Thoma:totally-positive} on the asymptotic character theory of symmetric groups. Both arguments use hard results in the theory of entire functions. Later, proofs entirely within asymptotic character theory emerged out of the work of Vershik, Kerov, Okounkov, and Olshanski~\cite{Vershik-Kerov:asymptotic}\cite{Kerov:book}\cite{Okounkov:InfSymGroup}\cite{Kerov-Okounkov-Olshanski}.

The following is an easy consequence of (\ref{thm:Witt-E-T}) (see section 7 of~\cite{Borger:totalpos}):

\begin{corollary}\label{cor:totalpos-summary}
	The map $\srp\:W(\bR)\to 1+t\bR[[t]]$ restricts to bijections
		\begin{align*}
			W(\bRpl) &\longisomap \bigsetof{e^{\gamma t}\prod_{i=1}^\infty(1+\alpha_i t)}{\alpha_i,\gamma\in\bRpl,\sum_i\alpha_i<\infty} \\
			W(\bN) &\longisomap \bigsetof{f(t)\in 1+t\bZ[t]}{\text{all complex roots of $f(t)$ are
				negative reals}}.
		\end{align*}
\end{corollary}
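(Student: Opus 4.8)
The plan is to deduce Corollary~\ref{cor:totalpos-summary} from Theorem~\ref{thm:Witt-E-T} by relating $W$ to $\Wsch$ and then specializing.

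First I would treat $W(\bRpl)$. The inclusion $\Sch\subseteq\Lambda_\bN$ induces a natural map $W(A)\to\Wsch(A)$, and since $\bRpl$ has additive cancellation, both $\srp\colon W(\bRpl)\to 1+t\bRpl[[t]]$ and $\srp\colon\Wsch(\bRpl)\to 1+t\bRpl[[t]]$ are injective (they are the restrictions of the bijective maps for $\bR$). So I can identify $W(\bRpl)$ with its image inside $\Wsch(\bRpl)$, which by~\eqref{map:E-T} is the set of series $e^{\gamma t}\prod(1+\alpha_i t)/\prod(1-\beta_i t)$ with parameters in $\bRpl$ and $\sum(\alpha_i+\beta_i)<\infty$. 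The task is then to characterize which of these series lie in the image of $W(\bRpl)$. The cleanest route is to observe that $W(\bRpl)\to W(\bR)$ is injective (as $\bRpl\hookrightarrow\bR$ and $W$ preserves injectivity of additively cancellative targets), that $\srp$ on $W(\bR)$ is a bijection onto $1+t\bR[[t]]$, and that a Witt vector $a\in W(\bR)$ lies in the subset $W(\bRpl)$ precisely when the corresponding map $\Lambda_\bZ\to\bR$ sends $\Lambda_\bN$ into $\bRpl$, equivalently when it sends all the $m_\lambda$ (or, since the power sums generate after inverting integers but we want the $\bN$-structure, the monomial basis) to nonnegative reals. Concretely, $a\in W(\bRpl)$ iff $a\in\Wsch(\bRpl)$ \emph{and} $a$ factors through $\Lambda_\bN$; since $e_n=m_{1^n}$ and the $e_n$ generate $\Lambda_\bZ$ as a ring but not $\Lambda_\bN$ as a semiring, the extra condition is exactly that the series $\srp(a)=\sum a(e_n)t^n$ has nonnegative coefficients \emph{and} all monomial values are nonnegative; for the Edrei--Thoma series this forces $\prod(1-\beta_i t)^{-1}$ to contribute no poles, i.e.\ all $\beta_i=0$. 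I would spell this out by noting that if some $\beta_i>0$ then $\srp(a)$ has a pole, hence is not a polynomial, and one checks directly (e.g.\ evaluating on a suitable $m_\lambda$, or using that $W(\bRpl)\to\Wsch(\bRpl)$ followed by multiplication-compatibility forces finiteness) that such an $a$ cannot lie in $W(\bRpl)$; conversely $e^{\gamma t}\prod(1+\alpha_i t)$ visibly comes from a point of $W(\bRpl)$, being a (possibly infinite) product of anti-Teichm\"uller-type and exponential factors all of which have nonnegative Witt coordinates. This yields the first bijection.

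Next I would treat $W(\bN)$. The inclusion $\bN\hookrightarrow\bRpl$ gives an injection $W(\bN)\hookrightarrow W(\bRpl)$, and under $\srp$ this identifies $W(\bN)$ with the set of series in the image of $W(\bRpl)$ that additionally lie in $1+t\bZ[[t]]$ — i.e.\ have integer coefficients. So from the first part I get: $\srp$ identifies $W(\bN)$ with the set of $f(t)=e^{\gamma t}\prod(1+\alpha_i t)\in 1+t\bZ[[t]]$, $\alpha_i,\gamma\in\bRpl$, $\sum\alpha_i<\infty$. The remaining point is purely analytic: such a series has integer coefficients iff $\gamma=0$ and only finitely many $\alpha_i$ are nonzero, in which case $f$ is a polynomial $\prod_{i=1}^d(1+\alpha_i t)$ with all complex roots $-1/\alpha_i$ negative real. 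The forward direction: an entire function of the stated form with a genuine exponential factor ($\gamma>0$) or infinitely many zeros grows too fast / has infinitely many coefficients that cannot all be integers unless it terminates — more carefully, $\log f(t)=\gamma t+\sum\log(1+\alpha_i t)$ has power series $\gamma t+\sum_{n\ge1}\frac{(-1)^{n-1}}{n}(\sum_i\alpha_i^n)t^n$, and if $f\in 1+t\bZ[[t]]$ is not a polynomial then its coefficients, being integers, cannot tend to $0$, which contradicts the fact that $e^{\gamma t}\prod(1+\alpha_i t)$ with $\sum\alpha_i<\infty$ is entire of order $\le 1$ and hence — when $\gamma>0$ or infinitely many $\alpha_i\ne0$ — is transcendental with coefficients tending to zero along a subsequence; the standard clean argument is that a power series with integer coefficients representing a function meromorphic on all of $\bC$ (here entire) must be rational (a theorem of Borel--P\'olya type), and an entire rational function is a polynomial. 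The reverse direction is immediate. Combining, $\srp(W(\bN))$ is exactly the set of $f\in 1+t\bZ[t]$ all of whose complex roots are negative reals.

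The main obstacle I expect is the integrality step for $W(\bN)$: proving that a series of Edrei--Thoma type with integer coefficients must actually be a polynomial. This is where one genuinely needs an input beyond formal manipulation — either the Borel--P\'olya theorem that an integer power series defining an entire (or even meromorphic) function is rational, or a direct growth estimate showing the coefficients of $e^{\gamma t}\prod(1+\alpha_i t)$ cannot all be integers unless the product is finite and $\gamma=0$. Everything else — the injectivity of the various $W$ and $\Wsch$ maps, the reduction to Theorem~\ref{thm:Witt-E-T}, and the identification of the $\beta_i=0$ condition with factoring through $\Lambda_\bN$ — is bookkeeping with the structures already set up in the excerpt. (Indeed the paper signals this is ``an easy consequence'' of Theorem~\ref{thm:Witt-E-T}, so the intended proof is presumably just this chain of observations, with the integrality fact either cited or dispatched quickly.)
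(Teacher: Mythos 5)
Your overall strategy — first identify $W(\bRpl)$ inside $\Wsch(\bRpl)$ via Theorem~\ref{thm:Witt-E-T}, then cut down to integer coefficients to get $W(\bN)$ — is the natural one, and it is presumably what section 7 of~\cite{Borger:totalpos} does (the paper itself only cites that reference, so there is no explicit proof here to compare against). However, you have mis-located the hard step, and there is a genuine gap precisely where you wave your hands.

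The gap is in the inclusion $\srp(W(\bRpl))\subseteq\{e^{\gamma t}\prod(1+\alpha_i t)\}$, i.e.\ in proving that a Witt vector $a\in W(\bRpl)$ whose $\srp(a)$ has some $\beta_j>0$ cannot exist. You say this can be seen by ``evaluating on a suitable $m_\lambda$'', but no single $m_\lambda$ works: for example, with two zeros $\alpha_1=\alpha_2=1$ and one pole $\beta_1=\eps$ small, one computes $a(\psi_n)=2+(-1)^{n-1}\eps^n$, so $a(m_{(2,2)})=\frac12\bigl(a(\psi_2)^2-a(\psi_4)\bigr)=\frac12\bigl((2-\eps^2)^2-(2-\eps^4)\bigr)\approx 1>0$, and similar small perturbations keep many individual $a(m_\lambda)$ nonnegative. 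What does work is to use the whole family $m_{2^k}=e_k\circ\psi_2$ together with an analytic input: since $\Lambda_\bN$ is closed under plethysm with $\psi_2$, the Witt vector $\psi_2(a)$ again lies in $W(\bRpl)$, and one checks from the ghost components that
	$$
	\srp\bigl(\psi_2(a)\bigr)=\sum_k a(m_{2^k})\,t^k=\frac{\prod_i(1+\alpha_i^2 t)}{\prod_j(1+\beta_j^2 t)}.
	$$
This series has nonnegative coefficients, so by Pringsheim's theorem its radius of convergence must be a singular point on the positive real axis; but the only singularities are poles at $t=-1/\beta_j^2$, all on the negative real axis, so the radius of convergence is $\infty$, forcing $\max_j\beta_j=0$. (Equivalently, a partial-fraction argument shows $a(m_{2^k})$ would eventually oscillate in sign.) Some such argument — involving infinitely many $m_\lambda$'s simultaneously, not just one — is indispensable, and your proposal does not supply it. Your alternative phrase ``multiplication-compatibility forces finiteness'' is too vague to evaluate.

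By contrast, the step you flag as ``the main obstacle'' — integrality for $W(\bN)$ — is in fact the easy part and needs no Borel--P\'olya input: given the first bijection, any $a\in W(\bN)$ has $\srp(a)=e^{\gamma t}\prod(1+\alpha_i t)$ entire, so its coefficients tend to $0$; being integers they are eventually $0$, so $\srp(a)$ is a polynomial, which forces $\gamma=0$ and only finitely many $\alpha_i\neq 0$. You do eventually state this coefficients-tend-to-zero argument, but you frame it as secondary to a rationality theorem you do not need. A small additional slip: in your argument that $e^{\gamma t}\prod(1+\alpha_i t)$ ``visibly comes from a point of $W(\bRpl)$'' you call the zero factors ``anti-Teichm\"uller-type''; they are Teichm\"uller lifts $[\alpha_i]$ — the anti-Teichm\"uller lifts $\anti{\beta}$ are precisely the ones giving poles $(1-\beta t)^{-1}$, and those do \emph{not} lie in $W(\bRpl)$ (e.g.\ $\anti{\beta}(m_{(2)})=-\beta^2<0$).
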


The algebra $\Wsch(\bN)$ is determined in the following section.

\section{Edrei--Thoma and Boolean Witt vectors}
\label{sec:transition-morphisms}

The purpose of this section is to describe the map on Witt vectors induced by the map $\bRpl\to\Bool$ in terms of the power-series description of $W(\bRpl)$ and the combinatorial description of $W(\Bool)$. We then use this to prove theorem~(\ref{thm:intro-A}), or equivalently~(\ref{thm:Davydov}).

\begin{proposition}
\label{thm:intro-C}
\begin{enumerate}
	\item \label{part:monomial-disc-inv}
		In terms of~(\ref{cor:totalpos-summary}) and~(\ref{thm:intro-B}), 
		the functorial map $W(\bRpl)\to W(\Bool)$ satisfies
			\begin{equation}
			e^{\gamma t} f(t) \longmapsto
				\begin{cases}
				(\deg f(t),0) & \text{if }\gamma=0 \\ 
				(\deg f(t),1) & \text{if }\gamma\neq0, 
				\end{cases}		
			\end{equation}
		for any polynomial $f(t)$. It sends all other series to $\infty$.
	\item \label{part:disc-inv}
		In terms of~(\ref{thm:Witt-E-T}) and~(\ref{thm:intro-B}), the functorial map 
		$\Wsch(\bRpl)\to\Wsch(\Bool)$ sends any rational function $f(t)/g(t)$ in lowest terms
		to the pair $(\deg f(t), \deg g(t))$. It sends all other series to $\infty$. 
\end{enumerate}
\end{proposition}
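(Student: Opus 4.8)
The plan is to compute the two functorial maps on a short list of ``atomic'' Witt vectors and then assemble the general statements from the additive structure. First, observe that both maps are simply post-composition with the unique semiring homomorphism $\pi\colon\bRpl\to\Bool$, which sends $0$ to $0$ and every positive real to $1$; so for $a\in W(\bRpl)$ the image $\pi\circ a$ has monomial partition kernel $\setof{\lambda\in P}{a(m_\lambda)=0}$, and for $a\in\Wsch(\bRpl)$ the image has Schur partition kernel $\setof{\lambda\in P}{a(s_\lambda)=0}$ (the values $a(m_\lambda)$, $a(s_\lambda)$ being automatically nonnegative reals). By (\ref{cor:totalpos-summary}) and (\ref{thm:Witt-E-T}), $\srp$ restricts to bijections out of $W(\bRpl)$ and $\Wsch(\bRpl)$, hence is injective there; and since $\srp$ turns Witt addition into multiplication of power series, this exhibits each $a$ as a sum of Witt vectors of three types: $\tau_\alpha$, the evaluation of symmetric functions at the single variable $\alpha$ (with $\srp(\tau_\alpha)=1+\alpha t$); $\anti{\beta}$, the anti-Teichm\"uller lift (with $\srp(\anti{\beta})=(1-\beta t)^{-1}$, occurring only in the Schur case); and $a_\gamma$, the exponential Witt vector (with $\srp(a_\gamma)=e^{\gamma t}$); here $\alpha,\beta,\gamma>0$. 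This sum can be infinite, but for any finite subcollection of the factors the corresponding partial product divides $\srp(a)$ among totally positive series, so $a$ dominates the corresponding partial sum of atoms for the order $\wleq$, by (\ref{pro:inequality-comparison}) --- whose hypothesis holds for $\Lambda_\bN$ and $\Sch$ since $\eps^+$ merely reads off the coefficient of the empty partition.

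Next I would identify the images of the atoms. Because $\tau_\alpha$ and $\anti{\beta}$ are functorial in the entry and $\pi$ collapses $\alpha,\beta$ to $1$, they map to $\tau_1$ and $\anti{1}$; now $\tau_1$ has $\srp=1+t$ and therefore kernel $\setof{\lambda\in P}{\lambda_2\geq1}=I_{(1,0)}$, so it is the element $(1,0)$ in both $W(\Bool)$ and $\Wsch(\Bool)$, while $\anti{1}$ is $(0,1)$ by (\ref{subsec:explicit-Wsch(Bool)}). The genuinely computational step is the image of $a_\gamma$ for $\gamma>0$, and it is exactly here that the monomial and Schur worlds diverge. On the monomial side one has $a_\gamma(\psi_n)=0$ for $n\geq2$ (as $\log e^{\gamma t}=\gamma t$ has no higher terms) while $a_\gamma(e_n)=\gamma^n/n!>0$, so $\pker(\pi\circ a_\gamma)$ is a nonempty prime partition ideal containing the one-row partition $(2)$ but no column $1^n$; by the classification (\ref{pro:W(Bool)-primeparid})--(\ref{pro:W(Bool)-primeids}) it must equal $I_{(0,1)}$, i.e.\ $a_\gamma\mapsto(0,1)$. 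On the Schur side, by contrast, every $s_\lambda$ dominates $m_{1^{|\lambda|}}=e_{|\lambda|}$ in $\Sch$ (the Kostka number $K_{\lambda,1^{|\lambda|}}$ is positive, as used in (\ref{prop:Bool-map})), so $a_\gamma(s_\lambda)\geq\gamma^{|\lambda|}/|\lambda|!>0$ for \emph{every} $\lambda$, whence $\sker(\pi\circ a_\gamma)=\emptyset$ and $a_\gamma\mapsto\infty$. (And $a_0=0$ maps to $(0,0)$.)

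The finite cases then assemble at once. For part (1), a polynomial series $e^{\gamma t}f(t)$ with $f(t)=\prod_{i=1}^d(1+\alpha_i t)$, $\alpha_i>0$ and $d=\deg f$, corresponds to $a=a_\gamma+\sum_{i=1}^d\tau_{\alpha_i}$, so its image is $(0,0)+d\cdot(1,0)=(d,0)$ if $\gamma=0$ and $(0,1)+d\cdot(1,0)=(d,1)$ if $\gamma\neq0$, using the addition laws of (\ref{thm:intro-B}). For part (2), a totally positive series is rational precisely when $\gamma=0$ and only finitely many $\alpha_i,\beta_i$ are nonzero (unambiguous by the uniqueness in (\ref{thm:Witt-E-T})); then $f(t)=\prod_{i=1}^d(1+\alpha_i t)$ has only negative real roots and $g(t)=\prod_{i=1}^e(1-\beta_i t)$ only positive real roots, so $f,g$ are automatically coprime, the presentation $f/g$ is already in lowest terms with $\deg f=d$ and $\deg g=e$, and $a=\sum_{i=1}^d\tau_{\alpha_i}+\sum_{i=1}^e\anti{\beta_i}$ has image $d\cdot(1,0)+e\cdot(0,1)=(d,e)$.

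Finally, the ``other'' series go to $\infty$ by monotonicity: post-composition with $\pi$ preserves the order $\wleq$ (if $a\wleq b$ then $a(x)\leq b(x)$, and $\pi$ respects this). If $f$ has infinitely many roots, then $\prod_{i=1}^N(1+\alpha_i t)$ divides $\srp(a)$ for each $N$, so the image of $a$ is $\wgeq$ the image of $\sum_{i=1}^N\tau_{\alpha_i}$, namely $(N,0)$; as the unique element of $\bN^2\cup\{\infty\}$ above all of the $(N,0)$ is $\infty$, we get $a\mapsto\infty$. Symmetrically, infinitely many poles force the image $\wgeq(0,N)$ for all $N$, and a nonzero $\gamma$ in the Schur case forces the image $\wgeq$ the image of $a_\gamma$, which is $\infty$; in each such case $a\mapsto\infty$. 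The one nonroutine ingredient is the identification of the exponential atom $a_\gamma$ --- where the asymmetry between the monomial and Schur bases produces the asymmetry between parts (1) and (2), and where the primality classification (\ref{pro:W(Bool)-primeids}) does the real work --- while everything else is bookkeeping with the addition laws and the domination lemma (\ref{pro:inequality-comparison}).
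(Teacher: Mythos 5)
Your proof is correct. Part (1) is essentially the paper's argument: split $z$ via multiplicativity of $\srp$ into the $e^{\gamma t}$ factor and the $\prod_i(1+\alpha_i t)$ factor, identify the image of each in $W(\Bool)$, and reassemble with the addition law; the paper evaluates $z(m_\lambda)=m_\lambda(\alpha_1,\alpha_2,\dots)$ all at once where you work atom-by-atom with the $\tau_{\alpha_i}$, but this is cosmetic. For part (2) the paper reduces to part (1) by passing through the transition map $W(\Bool)\to\Wsch(\Bool)$ of (\ref{prop:Bool-map}); you bypass this by showing directly that the image of the exponential atom $a_\gamma$ in $\Wsch(\Bool)$ has empty Schur partition kernel, via the Kostka estimate $s_\lambda\geq e_{|\lambda|}$ giving $a_\gamma(s_\lambda)\geq \gamma^{|\lambda|}/|\lambda|!>0$. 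That is the very estimate the paper uses to prove (\ref{prop:Bool-map}), so the two routes hinge on the identical key fact, just packaged differently. Your explicit appeal to monotonicity of $\wleq$ together with (\ref{pro:inequality-comparison}) to dispose of the infinite-root case and the $\gamma\neq 0$ Schur case is a clean substitute for the paper's uniform evaluation, which treats finite and infinite numbers of zeros in one formula; both are sound.
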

\begin{proof}
(\ref{part:monomial-disc-inv}): 
Let $\varphi$ denote the functorial map  $\varphi\:W(\bRpl)\to W(\Bool)$.
Let $z\in W(\bRpl)$ be a Witt vector. By~(\ref{cor:totalpos-summary}), we can write
	$$
	\srp(z)=e^{\gamma t}\prod_{i=1}^\infty (1+\alpha_i t).
	$$
Let $r\leq\infty$ be the number of nonzero $\alpha_i$. Abusing notation, let us still write $r$ for the
image of $r$ under the map $\bN\cup\{\infty\}\to W(\Bool)$. (So $r=(r,0)$ for $r\neq\infty$.)
By~(\ref{pro:W(Bool)-description}), it is enough to show $\varphi(z)=r$ if $\gamma=0$, and
$\varphi(z)=r+(0,1)$ if $\gamma\neq 0$. It is also enough to consider
separately the cases where $\gamma=0$ and where all $\alpha_i=0$; this is because
the map $\srp$ identifies addition of Witt vectors with multiplication of power 
series.

First suppose $\gamma=0$. Then we have $z(m_\lambda)=m_\lambda(\alpha_1,\alpha_2,\dots)$ for all
partitions $\lambda$. (For example, one can observe that $z$ is the image of $\sum_i[\alpha_i]$ under the
map $\Nhat{\bRpl}\to W(\bRpl)$ from the convergent monoid algebra. See~(7.6) 
and~(4.12.1) in~\cite{Borger:totalpos}.) If $r=\infty$, this is zero for no 
$\lambda$, and if $r<\infty$, it is zero if and only if $\lambda_{r+1}\geq 1$. Therefore the partition
ideal corresponding to $\varphi(z)$ is the empty one if $r=\infty$ and is $I_{(r,0)}$ if $r<\infty$. Hence
we have $\varphi(z)=r$, whether $r$ is infinite or not.

Now suppose $\gamma\neq 0$ and $\alpha_i=0$ for all $i$. Then we have
	$$
	\exp(\gamma t)=\srp(z)=\exp\Big(-\sum_{n\geq 1}z(\psi_n)\frac{(-t)^n}{n}\Big),
	$$
and hence $z(m_{1^1})=z(\psi_1)=\gamma\neq 0$ and $z(m_{2^1})=z(\psi_2)=0$. Therefore $\varphi(z)$ 
has partition kernel $I_{(0,1)}$, and so $\varphi(z)=(0,1)$.

(\ref{part:disc-inv}): 
Let $\phiSch$ denote the functorial map $\Wsch(\bRpl)\to\Wsch(\Bool)$.
By~(\ref{thm:Witt-E-T}) and~(\ref{cor:totalpos-summary}), any element $z\in\Wsch(\bRpl)$ is of
the form $x+\anti{1}y$ for some $x,y\in W(\bRpl)$. Because anti-Teichm\"uller lifts are functorial,
we have $\phiSch(x+\anti{1}y) = \phiSch(x)+\anti{1}\phiSch(y)$. Therefore 
by~(\ref{pro:Wsch(Bool)-arithmetic}), we are reduced to the case $z\in W(\bRpl)$, and this follows from
(\ref{prop:Bool-map}) and part (\ref{part:monomial-disc-inv}) above.
\end{proof}

\subsection{Remark}
The Edrei--Thoma theorem~(\ref{thm:Witt-E-T}) provides a discrete invariant associated to a totally positive
series, namely the pair consisting of the number of zeros and the number of poles, or $\infty$ if the series is
not a rational function. By (\ref{thm:intro-C})(\ref{part:disc-inv}), we may interpret this invariant as the
image of the corresponding Witt vector under the natural map $\Wsch(\bRpl)\to\Wsch(\Bool)$. So we have the
satisfying fact that the coarse invariant coming from the Boolean Witt vectors discussed in the introduction
agrees with the obvious discrete invariant given by the Edrei--Thoma theorem.

\subsection{Remark}
The map $\Wsch(\bRpl)\to \Wsch(\Bool)$ is surjective, but $W(\bRpl)\to W(\Bool)$ is not.

\begin{lemma}
\label{lem:goes-extinct}
Given any $a\in \Wsch(\bRpl)$, we have
	$
	\lim_{k\to\infty} \min_{\lambda\vdash k}a(s_\lambda) = 0,
	$
where $\lambda\vdash k$ means that $\lambda$ is a partition of $k$.
\end{lemma}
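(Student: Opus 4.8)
The plan is to produce, for each $k$, one explicit partition of $k$ whose $a$-value is tiny, by playing the multiplicativity of $a$ off against the enormous Schur expansion of a power of $e_1$. The Edrei--Thoma theorem is not needed.

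First I would put $c=a(e_1)\in\bRpl$; this is finite, being the coefficient of $t$ in $\srp(a)$. Since $a$ is a semiring homomorphism, $a(e_1^{\,k})=c^k$ for every $k$. On the other hand $e_1=s_{(1)}$, so iterating the Pieri rule (equivalently, by Frobenius's formula for the degrees of the irreducible characters of $S_k$) one has
\[
e_1^{\,k}\;=\;\sum_{\lambda\vdash k} f^\lambda\, s_\lambda,
\]
where $f^\lambda$ denotes the number of standard Young tableaux of shape $\lambda$. Applying $a$, keeping only the summand indexed by an arbitrary fixed $\mu\vdash k$, and bounding the remaining (nonnegative) summands below by $0$, we obtain $c^k\geq f^\mu\,a(s_\mu)$, and hence
\[
\min_{\lambda\vdash k} a(s_\lambda)\;\leq\; a(s_\mu)\;\leq\;\frac{c^k}{f^\mu}.
\]

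To finish, I would choose $\mu=\mu(k)$ to be a partition of $k$ with $f^\mu$ as large as possible. From the identity $\sum_{\lambda\vdash k}(f^\lambda)^2=k!$ (decomposition of the regular representation of $S_k$) together with the fact that there are only $p(k)$ partitions of $k$, the maximal value satisfies $f^{\mu}\geq\sqrt{k!/p(k)}$. Therefore
\[
\min_{\lambda\vdash k} a(s_\lambda)\;\leq\; c^k\sqrt{\frac{p(k)}{k!}},
\]
and the right-hand side tends to $0$ as $k\to\infty$, because $p(k)$ grows subexponentially (of order $e^{O(\sqrt k)}$) while $k!$ eventually outgrows $C^k$ for every fixed constant $C$.

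I do not expect a serious obstacle here: all the content lies in choosing the right test element $e_1^{\,k}$ and in recognizing that the astronomically large dimensions $f^\lambda$ force some Schur value to be minuscule. The one point to be a little careful about is to bound $\min_\lambda a(s_\lambda)$ through a single well-chosen $\mu$ rather than trying to control the whole sum; one could instead use $\sum_{\lambda\vdash k} f^\lambda$ (the number of involutions in $S_k$), but the identity $\sum_{\lambda\vdash k}(f^\lambda)^2=k!$ makes the estimate cleanest.
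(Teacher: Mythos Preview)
Your proof is correct and follows essentially the same approach as the paper: both expand $s_1^{\,k}=e_1^{\,k}=\sum_{\lambda\vdash k} f^\lambda s_\lambda$, apply $a$, and exploit the identity $\sum_\lambda (f^\lambda)^2=k!$. The only cosmetic difference is in the final estimate: the paper bounds the full sum $a(s_1)^k\geq c_k\sum_\lambda f^\lambda\geq c_k\sqrt{k!}$ (using $(\sum d_\lambda)^2\geq\sum d_\lambda^2$), obtaining $c_k\leq c^k/\sqrt{k!}$ without the $p(k)$ factor, whereas you pick a single $\mu$ with maximal $f^\mu$ --- the alternative you yourself mention in your final paragraph is exactly what the paper does.
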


\begin{proof}
Let $k\in\bN$. By Schur--Weyl duality, every vector space $V$ over $\bC$ satisfies
	\[
	V^{\otimes k}\cong\bigoplus\limits_{\lambda\vdash k}M_{\lambda}\otimes
	L_{\lambda}\left(  V\right)
	\]
as $S_{k}\times\operatorname*{GL}(  V)  $-modules, where
$M_{\lambda}$ is the irreducible $S_{k}$-module corresponding to the partition
$\lambda$, and $L_{\lambda}$ is the Schur functor corresponding to the
partition $\lambda$. Taking the $\operatorname*{GL}(  V)
$-character of this isomorphism, we obtain
	\[
	s_1^{k}=\sum\limits_{\lambda}\dim\left(
	M_{\lambda}\right)  \cdot s_{\lambda},
	\]
where we sum over partitions $\lambda$ satisfying $\sum_i \lambda_i=k$.
Applying the ring homomorphism $a$ to this equality, we obtain
	\begin{equation}
	\label{eq:local-2}
	a(s_1)^{k}=\sum
	\limits_{\lambda}\dim\left(  M_{\lambda}\right)  \cdot a\left(
	s_{\lambda}\right).
	\end{equation}
Writing $c_k=\min_{\lambda\vdash k}a(s_\lambda)$, we then have 
	$$
	a(s_1)^{k}  
	\geq c_k\sum_{\lambda}\dim\left(  M_{\lambda}\right)
	\geq c_k\Big(\sum_{\lambda}\dim(M_{\lambda})^2\Big)^{1/2}
	=c_k\sqrt{k!},
	$$
since the sum of the squares of the dimensions of all irreducible $S_{k}$-modules is $\vert
S_{k}\vert =k!$. It then follows that $c_k\to 0$ as $k\to\infty$.
\end{proof}

\subsection{Proof of theorem~(\ref{thm:intro-A})}
\label{subsec:proof-of-integral-E-T} 
For any Witt vector $a\in\Wsch(\bN)$, lemma (\ref{lem:goes-extinct}) implies there must be a partition
$\lambda$ such that $a(s_\lambda)=0$. In other words, the image of $a$ in $\Wsch(\Bool)$ is not $\infty$.
Therefore by (\ref{thm:intro-C})(\ref{part:disc-inv}) and the Edrei--Thoma theorem~(\ref{thm:Witt-E-T}),
the series $\srp(a)\in 1+t\bR[[t]]$ is of the form $g(t)/h(t)$, where $g(t)$ and $h(t)$ are
polynomials in $1+t\bR[t]$ such that all the complex roots of $g(t)$ are negative real numbers and all
those of $h(t)$ are positive real numbers. Now invoke the fact that if the ratio of two coprime
polynomials in $1+t\bC[t]$ has integral coefficients when expanded as a power series in $t$, then the
polynomials themselves have integral coefficients. (To show they have rational coefficients, one can
observe that $g(t)$ and $h(t)$ are invariant under any field automorphism of $\bC$, but there are also
choice-free arguments. Then to show the coefficients are integral, one can argue as in the solution to
exercise 1a of chapter 4 of~\cite{Stanley:EC1}.) \qed

\subsection{Remark}
Another way of expressing theorem~(\ref{thm:intro-A}) is that we have an isomorphism of
$\bN$-algebras
	$$
	W(\bN)\tn_\bN \bN[\eta]/(\eta^2=1) \longisomap \Wsch(\bN)
	$$
which is the natural map on the factor $W(\bN)$ and sends $1\tn\eta$ to the anti-Teichm\"uller lift 
$\anti{1}\in\Wsch(\bN)$. In fact, each factor on the left side has a compatible $\Sch$-semiring
structure making the map an isomorphism of $\Sch$-semirings.

\subsection{Remark: partition-ideal phenomena in general $W(A)$}
\label{subsec:partitional-ideal-phenomena}
As discussed in the introduction, phenomena related to $\bN^2$ and partition ideals will be present in $W(A)$ (and $\Wsch(A)$) whenever $A$ is not a ring. This is simply because, by~(\ref{prop:nonrings-map-to-Bool}) below, every non-ring $A$ admits a map to $\Bool$ and hence there is always a map $W(A)\to W(\Bool)$. This is analogous to the presence of $p$-adic phenomena in Witt vectors of rings admitting a map to a field of characteristic $p$, or equivalently rings not containing $1/p$. But in the case of $\Bool$, there is more structure: the set of maps $A\to\Bool$ has a partial order given by pointwise application of the partial order $0<1$ of $\Bool$. So in fact there is a family of partition-ideal phenomena indexed by this partially ordered set.

For example, consider the case where $A$ is a zerosumfree domain (that is, it is nonzero and
satisfies the implication $x+y=0 \Rightarrow x=y=0$ and the implication $xy=0 \Rightarrow x=0\text{
or }y=0$) and $f$ is the $\bN$-algebra map $A\to\Bool$ sending all nonzero elements to $1$. This is
the unique maximal element of the partially ordered set and hence gives a particularly natural way 
in which partition-ideal phenomena appear. The following is an immediate a consequence of~(\ref{thm:intro-B}).

\begin{corollary}
	Let $A$ be a zerosumfree domain, and let $a\in W(A)$ be a Witt vector such that $a(m_\lambda)=0$ for some $\lambda\in P$. Then there is a unique pair $(x,y)\in\bN^2$ such that $a(m_\lambda)=0$ if and only if $\lambda_{x+1} \geq y+1$. The analogous statement is true for $\Wsch(A)$ and the Schur basis $(s_\lambda)_{\lambda\in P}$.
\end{corollary}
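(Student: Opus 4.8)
The plan is to reduce the statement to the classification of prime partition ideals from Section~\ref{sect:Wbool} (and Section~\ref{sec:Wsch-Bool}) by pushing the Witt vector $a$ forward along the canonical map $A\to\Bool$. First I would check that the assignment sending $0$ to $0$ and every nonzero element of $A$ to $1$ is an $\bN$-algebra homomorphism $f\:A\to\Bool$. This is exactly where the hypotheses enter: zerosumfreeness guarantees that a sum of two nonzero elements is nonzero, so $f$ respects $+$; the domain condition gives the same for products, so $f$ respects $\times$; and $A\neq 0$ forces $f(1)=1$. By functoriality of $W$ we then get an $\bN$-algebra map $W(f)\:W(A)\to W(\Bool)$, and $W(f)(a)=f\circ a$ is an $\bN$-algebra map $\Lambda_\bN\to\Bool$. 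Since $f(x)=0$ precisely when $x=0$, the monomial partition kernel of $W(f)(a)$ equals $\setof{\lambda\in P}{a(m_\lambda)=0}$, which is nonempty by hypothesis.

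Next I would invoke (\ref{pro:W(Bool)-primeparid})(3), which says that the partition kernel of an $\bN$-algebra map $\Lambda_\bN\to\Bool$ is a prime partition ideal, together with (\ref{pro:W(Bool)-primeids}), which says that every nonempty prime partition ideal is of the form $I_{(x,y)}=\setof{\lambda\in P}{(y+1)^{x+1}\pleq\lambda}$ for some pair $(x,y)\in\bN^2$. Uniqueness of $(x,y)$ is immediate, since $I_{(x,y)}$ has $(y+1)^{x+1}$ as its $\pleq$-minimal element and distinct pairs produce distinct minimal partitions. Finally, unwinding the definition of $\pleq$ and using that $\lambda$ is weakly decreasing, $\lambda\in I_{(x,y)}$ is equivalent to $\lambda_{x+1}\geq y+1$, which is exactly the asserted description.

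For the Schur version the argument is verbatim the same, with $m_\lambda$ replaced by $s_\lambda$, $W$ by $\Wsch$, and the cited results replaced by their Schur counterparts (\ref{pro:Wsch(Bool)-primeparid})(\ref{part:schur-prime-char}) and (\ref{pro:Wsch(Bool)-primeids}); the latter is precisely what guarantees that the Schur prime partition ideals are again exactly the ideals $I_{(x,y)}$. I do not expect any genuine obstacle: the one point that needs care is the verification that $f$ is a semiring homomorphism, and that is a one-line application of the two defining properties of a zerosumfree domain; everything else is functoriality plus the already-proved combinatorial classification.
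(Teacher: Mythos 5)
Your proof is correct and follows essentially the same route as the paper, which treats the corollary as an immediate consequence of Theorem~(\ref{thm:intro-B}) via the $\bN$-algebra map $A\to\Bool$ sending every nonzero element to $1$. The only difference is cosmetic: you cite the underlying classification Propositions~(\ref{pro:W(Bool)-primeparid}), (\ref{pro:W(Bool)-primeids}) and their Schur analogues directly rather than the packaged statement of Theorem~(\ref{thm:intro-B}), and you spell out the verification that the zerosumfree-domain hypotheses make $f$ a semiring homomorphism, which the paper leaves implicit.
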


We conclude this section with the proposition invoked above.

\begin{proposition}
\label{prop:nonrings-map-to-Bool}
\begin{enumerate}
	\item An $\bN$-module $M$ is a group if and only if $\Bool\tn_\bN M=0$.
	\item An $\bN$-algebra $A$ is a ring if (under the axiom of choice) and only if 
		there exists no $\bN$-algebra homomorphism $A\to\Bool$.
\end{enumerate}
\end{proposition}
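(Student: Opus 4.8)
The plan is to handle the two parts separately, deriving the second from the first together with a Zorn's-lemma argument. For part~(1), recall that $\Bool = \bN/(1+1=1)$, so $\Bool\tn_\bN M$ is the quotient of $M$ by the congruence generated by $m \sim m + m$ for all $m\in M$; equivalently it is the quotient monoid $M/{\equiv}$ where $\equiv$ is the smallest submonoid of $M\times M$ containing the diagonal and all pairs $(m, 2m)$. First I would observe that if $M$ is a group, then $m + m = m'$ forces $m = m' - m$, and one checks directly that the relation ``$a - b$ lies in $M$ and is a difference of the form $m - m = 0$'' collapses everything: more cleanly, in a group the element $m$ and $2m$ are related by adding $m$, and since $-m$ exists we can cancel, so the congruence identifies every element with $0$; hence $\Bool\tn_\bN M = 0$. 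Conversely, if $\Bool\tn_\bN M = 0$, then every $m\in M$ is congruent to $0$ under the congruence generated by the relations $x \sim 2x$. Unwinding a chain of such elementary relations and using that each step only ever \emph{adds} elements of $M$, one shows that $m$ congruent to $0$ forces the existence of $n\in M$ with $m + n = n$ (an ``infinitely absorbing'' witness), and then $2m + n = m + n = n$, etc.; iterating and using that the relation is symmetric, one extracts an additive inverse for $m$. This cancellation/extraction bookkeeping is the one place requiring care, but it is elementary monoid theory.

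For part~(2), the ``only if'' direction is immediate: if $A$ is a ring it contains $-1$, and any semiring map $\phi\colon A\to\Bool$ would satisfy $\phi(-1) + 1 = 0$ in $\Bool$, which is impossible since $0 \neq 1$ and $x + 1 = 0$ has no solution in $\Bool$ (indeed $0+1 = 1$ and $1 + 1 = 1$). For the ``if'' direction, suppose $A$ is not a ring; I want to produce a map $A\to\Bool$. The additive monoid $(A,+)$ is then not a group, so by part~(1) we have $\Bool\tn_\bN A \neq 0$. Now $\Bool\tn_\bN A$ is a nonzero $\Bool$-algebra, so by Zorn's lemma (here is the use of choice) it has a maximal ideal, and the quotient by a maximal ideal is a nonzero $\Bool$-algebra with no nontrivial quotients. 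By the cited fact from Golan's book (p.~87), the only such semirings are fields and $\Bool$ itself; but a $\Bool$-algebra cannot be a field of characteristic other than via $\Bool$, since in a $\Bool$-algebra we have $1 + 1 = 1$, which in a field would give $1 = 0$. Hence the quotient is $\Bool$, and composing $A \to \Bool\tn_\bN A \to \Bool$ gives the desired homomorphism.

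The step I expect to be the main obstacle is the converse direction of part~(1): extracting an honest additive inverse from the mere statement $\Bool\tn_\bN M = 0$. One must argue at the level of the explicitly generated congruence rather than appealing to any ring-theoretic cancellation, since $M$ is only a monoid; the key maneuver is to show that $m \equiv 0$ produces an element $n$ with $m + n = n$, and then to check that such an ``absorbing'' element already forces $M$ to be a group (given any $m'\in M$, one has $m' + (n) = $ something absorbing, and one plays the same game). Once part~(1) is in hand, part~(2) is a short formal argument combining it with Zorn's lemma and the Golan classification, both of which we are entitled to cite.
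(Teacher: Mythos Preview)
Your plan for part~(2) is essentially the paper's argument: pass to $\Bool\tn_\bN A$, take a minimal nonzero quotient via Zorn, and use the Golan classification together with $1+1=1$ to rule out fields. One terminological slip: in semiring theory, quotients correspond to congruences, not ideals, so ``maximal ideal'' should be ``maximal proper congruence'' (equivalently, ``minimal nonzero quotient''). But the structure is correct.

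The converse of part~(1), however, has a genuine gap. Your key claim is that if $m$ is congruent to $0$ under the congruence generated by $x\sim 2x$, then there exists $n\in M$ with $m+n=n$. This is false. Take $M=\bZ$: it is a group, so $\Bool\tn_\bN M=0$ and every element is congruent to $0$; but for $m=1$ there is no $n\in\bZ$ with $1+n=n$. The error upstream is the assertion that ``each step only ever adds elements of $M$'': the generating relation $x\sim 2x$ is symmetric, so elementary steps can remove a summand as well as add one, and starting from $0$ you \emph{can} move to nonzero elements whenever $M$ already has invertible elements (e.g.\ $0=1+(-1)\sim 2\cdot 1+(-1)=1$ in $\bZ$). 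Even if an absorbing $n$ existed, your subsequent sketch does not explain how to extract an inverse from it, and indeed absorbing elements do not force a monoid to be a group (consider $\bN\cup\{\infty\}$).

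The paper's argument avoids all of this with a single invariant. Write the congruence as the transitive closure of the symmetric relation
\[
a\sim b \iff a=z+2x+y,\ b=z+x+2y \text{ for some } x,y,z\in M.
\]
Observe that the set of additively invertible elements of $M$ is closed under sums (obvious) and under summands (if $b+c$ has inverse $u$ then $b$ has inverse $c+u$). Hence if $a\sim b$ and $a$ is invertible, then $z,x,y$ are each invertible, so $b$ is; and conversely. Thus invertibility is constant on $\approx$-classes. Since $\Bool\tn_\bN M=0$ means every $a$ satisfies $a\approx 0$, and $0$ is invertible, every $a$ is invertible. This replaces your absorbing-element bookkeeping entirely.
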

\begin{proof}
(1): If $M$ is a group, then it is clear that $\Bool\tn_\bN M=0$. Conversely, suppose 
$\Bool\tn_\bN M=0$. Since $\Bool$ is $\bN/(2=1)$, the module
$\Bool\tn_\bN M$ is the quotient of $M$ by the equivalence relation $\approx$
generated by relations $2x=x$ for all $x\in M$. That is, $\approx$ is the 
transitive closure of the reflexive symmetric relation $\sim$ defined by
	$$
	a\sim b \Longleftrightarrow a=z+2x+y, b=z+x+2y \text{ for some }x,y,z\in M.
	$$
Now the set of additively invertible elements of $M$ is closed under taking both sums and summands;
so it follows that if $a\sim b$, then $a$ is additively invertible if and only if $b$ is additively
invertible. Since $\approx$ is the transitive closure of $\sim$, the same is true of $\approx$.
Since we have assumed $M/{\approx}$ is $0$, we have $a\approx 0$ for any element $a\in M$. Thus $a$ 
is additively invertible, and so $M$ is a group.

(2): If $A$ is a ring, then it is clear there are no maps $A\to\Bool$. Conversely, if $A$ is not
a ring, then by part (1), we have $\Bool\tn_\bN A\neq 0$, and so by
Zorn's lemma $\Bool\tn_\bN A$ has a minimal nonzero quotient $\bN$-algebra $B$. Since $B$ is 
nonzero and has no
nonzero quotients, it must be a field or $\Bool$. (See Golan~\cite{Golan:book1}, p.\ 87.) But since 
$B$ admits a map from $\Bool$, it cannot be a field.
Therefore we have $B=\Bool$, and hence there is a map $A\to\Bool$.
\end{proof}

\section{$\Wp(\Bool)$}
\label{sec:p-typical}

Let $p$ be a prime number. In this section, we determine the $p$-typical Witt vectors of $\Bool$. The result is straightforward, but we include it for completeness.

\subsection{$p$-typical Witt vectors}
\label{subsec:p-typical}
Let us recall some definitions and basic results from section 8 of~\cite{Borger:totalpos}.
Write
	$$
	\psi = \sum_i x_i^p, \quad d = \frac{\big(\sum_i x_i)^p-\sum_i x_i^p}{p},
	\quad d_{i,j}=d^{\circ i}\circ \psi^{\circ j} \in\Lambda_\bN,
	$$
for all $i,j\in\bN$, where $\circ$ denotes the plethysm operation.
The $\bN$-algebra $\Lambda_{\bN,(p)}$ of $p$-typical symmetric functions is then defined to be the
sub-$\bN$-algebra of $\Lambda_\bN$ generated by the set $\setof{d_{i,j}}{i,j\in\bN}$,
and the set of $p$-typical Witt vectors with entries in a given $\bN$-algebra $A$ is defined by
	$$
	\Wp(A) = \Alg{\bN}(\Lambda_{\bN,(p)},A).
	$$ 
By (8.3) of~\cite{Borger:totalpos}, the relations $d_{i,j}^p = pd_{i+1,j}+
d_{i,j+1}$, generate all the relations between the $d_{i,j}$.
Therefore if we write $a_{i,j}=a(d_{i,j})$, we have the identification
	\begin{equation}
	\label{eq:p-typical-W}
	\Wp(A) = \bigsetof{(a_{i,j})\in A^{\bN^2}}{a_{i,j}^p = pa_{i+1,j}+ a_{i,j+1}}.
	\end{equation}
As shown in~\cite{Borger:totalpos}, there is a 
unique structure of a composition $\bN$-algebra on $\Lambda_{\bN,(p)}$ compatible with that of 
$\Lambda_\bN$. As always, $\Wp(A)$ is then naturally an $\bN$-algebra with an action of 
$\Lambda_{\bN,(p)}$ defined by $f(a)\:g\mapsto a(g\circ f)$ for all $f,g\in\Lambda_{\bN,(p)}$
and $a\in\Wp(A)$. Therefore, in terms of (\ref{eq:p-typical-W}), we have
	\begin{equation}
		\label{eq:p-typical-lambda-action}
		d(a)_{i,j} = a_{i+1,j}, \quad \psi(a)_{i,j}=a_{i,j+1}.
	\end{equation}

\subsection{$\Wp(\Bool)$}
In the particular case $A=\Bool$, equation~(\ref{eq:p-typical-W}) simplifies to
	\begin{equation}
	\label{eq:p-typical-W(Bool)}
	\Wp(\Bool) 
		= \bigsetof{(a_{i,j})\in \Bool^{\bN^2}}{a_{i,j}=0 \Leftrightarrow a_{i,j+1}=a_{i+1,j}=0}.
	\end{equation}
Since any $a_{i,j}$ is either $0$ or $1$, we see that $\Wp(\Bool)$ is identified with the set
of subsets $I\subseteq\bN^2$ satisfying $(i,j)\in I \Longleftrightarrow (i+1,j),(i,j+1)\in I$.
Since any such subset $I$, if nonempty, is determined by its unique element $(i,j)$ with $i+j$ 
minimal, we have a bijection
	\begin{equation}
	\label{map:p-typical-coords}
	\bN^2\cup\{\infty\}\longisomap\Wp(\Bool)
	\end{equation}
sending $(x,y)$ to the Witt vector $(a_{i,j})$ with $a_{i,j}=0 \Leftrightarrow i\geq x, j\geq y$ 
and sending $\infty$ to the Witt vector with $a_{i,j}=1$ for all $i,j$. 
By~(\ref{eq:order-determined-by-generators}), this bijection is 
an isomorphism of partially ordered sets, where $\Wp(\Bool)$ has the partial order $\wleq$,
where $\bN^2$ has the componentwise order
	$$
	(x,y)\leq (x',y') \Longleftrightarrow x\leq x' \text{ and } y\leq y',
	$$
and where $\infty$ is a terminal element.
Under this bijection, 
(\ref{eq:p-typical-lambda-action}) becomes
	\begin{equation}
	\label{eq:d-psi-translations}
	d(x,y)=(x-1,y), \quad \psi(x,y)=(x,y-1), \quad d(\infty)=\infty, \quad \psi(\infty)=\infty,
	\end{equation}
where we understand that negative coordinates are rounded up to zero.

\begin{proposition}
\label{prop:p-typical}
The $\bN$-algebra structure on $\bN^2\cup\{\infty\}$ inherited 
		from (\ref{map:p-typical-coords})
		satisfies the following properties for all $x,x',y,y'\in\bN$:
		\begin{enumerate}
			\item\label{(e)} The image of $n\in\bN$ under the unique algebra map 
				$\bN\to\bN^2\cup\{\infty\}$ is $(n,0)$ if $n\leq 1$ (or rather $n\leq 2$ if $p=2$)
				and is $\infty$ otherwise.
			\item\label{(f)} $(x,0)+(x',0)=\infty$ if $x,x'\geq 2$ 
			\item\label{(g)} $(0,y)+(0,y')=(0,\max\{y,y'\})$
			\item\label{(h)} $(x,0)+(0,y)=(x,y)$
			\item\label{(i)} $\infty+z=\infty$ for all $z\in\bN^2\cup\{\infty\}$
			\item\label{(j)} $(x,0)(x',0)= \infty$ for $x,x'\geq 2$
			\item\label{(k)} $(0,y)(0,y') =(0,\min\{y,y'\})$
			\item\label{(l)} $(x,0)(0,y)=(0,y)$ if $x\geq 1$
			\item\label{(m)} $\infty(0,y)=(0,y)$
			\item\label{(n)} $\infty(x,0)=\infty$ if $x\geq 1$.
		\end{enumerate}
\end{proposition}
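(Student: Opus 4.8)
The plan is to transport everything through the bijection~(\ref{map:p-typical-coords}) and to read off each of the laws (\ref{(e)})--(\ref{(n)}) by evaluating the relevant sum or product of Witt vectors on the generators $d_{i,j}$. The organizing observation is that, by~(\ref{eq:p-typical-W(Bool)}), a Witt vector $c\in\Wp(\Bool)$ is determined by its \emph{vanishing set} $V(c)=\setof{(i,j)\in\bN^2}{c(d_{i,j})=0}$, and that this set is forced to be $\emptyset$ (which gives $\infty$), all of $\bN^2$ (which gives $(0,0)$), or a quadrant $\setof{(i,j)}{i\geq x,\ j\geq y}$ (which gives $(x,y)$). Since $+$ and $\times$ on $\Wp(A)=\Alg{\bN}(\Lambda_{\bN,(p)},A)$ are induced by the comultiplications $\Delta^+,\Delta^\times$, and since $u^p=u=p\cdot u$ for all $u\in\Bool$, evaluating $a\tn b$ on $\Delta^{\pm}(d_{i,j})$ becomes bookkeeping with $\max$ and $\min$; the working tools are the power-sum formulas $\Delta^+(d_{0,j})=d_{0,j}\tn 1+1\tn d_{0,j}$ and $\Delta^\times(d_{0,j})=d_{0,j}\tn d_{0,j}$ (valid because $d_{0,j}=\psi^{\circ j}=\psi_{p^j}$), the formulas $\Delta^+(d)=d\tn 1+1\tn d+\sum_{k=1}^{p-1}\tfrac1p\binom pk\psi_1^k\tn\psi_1^{p-k}$ and $\Delta^\times(d)=p\,d\tn d+d\tn\psi+\psi\tn d$ (obtained from $d=(\psi_1^p-\psi_p)/p$, with $\psi_1=d_{0,0}$ and $\psi=d_{0,1}$), and the relation $d_{i,j}^p=p\,d_{i+1,j}+d_{i,j+1}$, which by~(8.3) of~\cite{Borger:totalpos} generates all relations among the $d_{i,j}$ and so furnishes the ``reduced'' $\bN$-spanning set of monomials $\prod d_{i',j'}^{\,e_{i',j'}}$ with all exponents $\leq p-1$.

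With this in hand the laws (\ref{(g)})--(\ref{(n)}) are routine. A vertical element $(0,y)$ kills $d_{0,j}$ exactly when $j\geq y$ and kills every $d_{i,j}$ with $i\geq 1$, so the power-sum coproducts yield (\ref{(g)}) and (\ref{(k)}) exactly as in part~(3) of~(\ref{pro:W(Bool)-description}); (\ref{(h)}) and (\ref{(l)}) come out after evaluating on the $d_{i,j}$ and using the structural fact that $\Delta^+(d_{k,j})$ is its primitive part $d_{k,j}\tn 1+1\tn d_{k,j}$ plus a remainder supported on monomials whose left factor has first indices $<k$ and whose right factor has second indices $\geq j$ (an induction from the recursion above), together with the constraint that the output set is a quadrant or $\emptyset$. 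Law (\ref{(i)}) is Proposition~(\ref{pro:inequality-comparison}), whose hypothesis $\eps^+(f)\leq f$ holds on $\Lambda_{\bN,(p)}$ since every $d_{i,j}$ has zero constant term. For the multiplicative absorptions: $\infty\cdot\infty=\infty$ because $\Delta^\times$ is injective (retraction $\eps^\times$), so $\Delta^\times(d_{i,j})$ is a nonzero $\bN$-combination of tensors of $d$-monomials, each pairing with $(\infty,\infty)$ to $1$; and $\infty\cdot(0,y)=(0,y)$, $\infty\cdot(x,0)=\infty$ ($x\geq 1$) follow from the fact that $\Delta^\times(d_{i,j})$ always contains a term $d_{i,j}\tn\nu$ with $\nu$ a monomial in the $d_{0,\ast}$ alone (an easy induction: the subtraction in $\Delta^\times(d_{i+1,j})=\tfrac1p\bigl(\Delta^\times(d_{i,j})^p-\Delta^\times(d_{i,j+1})\bigr)$ cannot reach such a term), plus evaluation on the $\psi_{p^j}$.

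For (\ref{(e)}) I pass to $\bZ$: applying $\eps^\times\tn\eps^\times$ to $\Delta^+(d_{i+1,j})=\tfrac1p\bigl(\Delta^+(d_{i,j})^p-\Delta^+(d_{i,j+1})\bigr)$ shows the integers $\gamma_{i,j}:=(n\cdot 1)(d_{i,j})$ (computed over $\bZ$, then reduced to $\Bool$) satisfy $\gamma_{0,j}=n$ and $\gamma_{i+1,j}=(\gamma_{i,j}^{\,p}-\gamma_{i,j+1})/p$; by Fermat these are $\geq 0$, and an elementary induction gives $\gamma_{i,j}\to 0$ in $i$ when $n\leq 1$ (or $n\leq 2$ and $p=2$) but $\gamma_{i,j}\geq 2$ throughout when $n\geq 2$ and $p>2$ (or $n\geq 3$ and $p=2$); reading off $V$ yields (\ref{(e)}), in particular $1+1=\infty$ for $p>2$ and $1+1=(2,0)$ for $p=2$. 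Laws (\ref{(f)}) and (\ref{(j)}) for $p=2$ then follow from monotonicity of $+$ and $\times$ (Proposition~(\ref{pro:inequality-comparison}) and the $\Alg{\bN}$-preorder property of $\wleq$): since $(x,0)\wgeq(2,0)=1+1$ for $x\geq 2$, we get $(x,0)+(x',0)\wgeq 4\cdot 1=\infty$ and, distributing, $(x,0)(x',0)\wgeq(1{+}1)(1{+}1)=4\cdot 1=\infty$; and for $p>2$, (\ref{(f)}) is immediate from $(x,0)\wgeq(1,0)$, so that $(x,0)+(x',0)\wgeq 1+1=\infty$. The single substantive case that remains is (\ref{(j)}) for $p>2$, i.e.\ $(2,0)(2,0)=\infty$ (the general case then following by monotonicity). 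As $(2,0)$ kills exactly the $d_{i',j'}$ with $i'\geq 2$, it is enough to show that for every $i,j$ the reduced-basis expansion of $\Delta^\times(d_{i,j})$ contains a term $\mu\tn\nu$ all of whose generators have first index $\leq 1$; then $(2,0)\tn(2,0)$ sends that term, hence $\Delta^\times(d_{i,j})$, to $1$, so $V\bigl((2,0)(2,0)\bigr)=\emptyset$. This is the main obstacle. One inducts on $i$ via $\Delta^\times(d_{i+1,j})=\tfrac1p\bigl(\Delta^\times(d_{i,j})^p-\Delta^\times(d_{i,j+1})\bigr)$, and the difficulty is that a naively chosen term can be destroyed both by the $-\tfrac1p\Delta^\times(d_{i,j+1})$ subtraction and by the monomial-basis reductions that accompany dividing by $p$. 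The way through: $\Delta^\times(d_{i,j+1})$ involves only generators $d_{i',j'}$ with $j'\geq j+1$, so any term of $\Delta^\times(d_{i,j})^p$ that still involves $d_{1,j}$ after reduction survives the subtraction; and one can produce a first-index-$\leq 1$ such term with coefficient not divisible by the offending power of $p$ by multiplying two distinct first-index-$\leq 1$ terms of $\Delta^\times(d_{i,j})$ whose $d_{1,j}$-exponents are incongruent modulo $p$ --- which is possible precisely because $p>2$. Granting this, $(2,0)(2,0)=\infty$, completing the proof.
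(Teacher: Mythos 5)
Your overall framing is right---work in the coordinates of~(\ref{map:p-typical-coords}), evaluate on the $d_{i,j}$, and use the explicit coproduct formulas for $d$ and $\psi$ together with~(\ref{pro:inequality-comparison})---and parts (\ref{(e)}), (\ref{(f)}), (\ref{(g)}), (\ref{(i)}), (\ref{(k)}), (\ref{(l)}), (\ref{(m)}) are handled in essentially the paper's way. But for (\ref{(h)}), (\ref{(j)}), and (\ref{(n)}) you depart from the paper and try to control the entire coproduct $\Delta^{\pm}(d_{i,j})$ for \emph{all} $i,j$, and these are exactly the places where your argument breaks down or stays at the level of an unverified sketch.

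The idea you are missing is that you never need to understand $\Delta^+(d_{i,j})$ or $\Delta^\times(d_{i,j})$ beyond the single generator $d$ (and the power sums $\psi^{\circ j}$). By (\ref{eq:d-psi-translations}), $d$ and $\psi$ act on $\Wp(\Bool)$ as coordinate shifts; in particular $d(c)=\infty$ forces $c=\infty$, and knowing $\psi(c)$ cuts the possibilities for $c$ down to two. So instead of expanding $\Delta^\times(d_{i,j})$ for all $i,j$, one applies $d$ or $\psi$ to the Witt vector in question and uses only $\Delta^+(d)=d\tn 1+1\tn d+\cdots$ and $\Delta^\times(d)=\psi\tn d+d\tn\psi+p\,d\tn d$, whose consequences are the ``Leibniz rules'' $d(a+b)=d(a)+d(b)+\cdots$ and $d(ab)=\psi(a)d(b)+d(a)\psi(b)+p\,d(a)d(b)$. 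For (\ref{(j)}) this gives, in one line, $d\bigl((2,0)(2,0)\bigr)=(2,0)(1,0)+(1,0)(2,0)+p\,(1,0)^2 = (2,0)+(2,0)+p(1,0)=\infty$ by (\ref{(e)}), (\ref{(f)}), (\ref{(i)}), hence $(2,0)^2=\infty$ --- with no case split on $p$. For (\ref{(h)}) one first shrinks $y$ to $1$ using $\psi$, then pins the answer down to $(x,0)$ or $(x,1)$ using $\psi$ again, and rules out $(x,0)$ by $d\bigl((x,0)+(0,1)\bigr)\wgeq(x-1,0)+(0,1)=(x-1,1)$, an induction on $x$. For (\ref{(n)}) with $x\geq 2$, $d\bigl(\infty\cdot(x,0)\bigr)\wgeq\psi(\infty)d(x,0)=\infty\cdot(x-1,0)$ and induct.

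By contrast your attempted proof of (\ref{(j)}) for $p>2$ requires, for every $(i,j)$, the existence in the \emph{reduced} $\bN$-basis expansion of $\Delta^\times(d_{i,j})$ of a surviving term $\mu\tn\nu$ whose generators all have first index $\leq 1$, and you propose to establish this by an induction that tracks $p$-divisibility of coefficients through the reduction $d_{i,j}^p=p\,d_{i+1,j}+d_{i,j+1}$. This is genuinely hard: the reductions do push first indices up, cancellation with $-\tfrac1p\Delta^\times(d_{i,j+1})$ is not obviously harmless because both sides are only known a priori in $\Lambda_{\bZ,(p)}\tn\Lambda_{\bZ,(p)}$ (the $\bN$-positivity of the difference is itself a nontrivial fact), and your ``multiply two terms with incongruent $d_{1,j}$-exponents mod $p$'' heuristic is not a proof --- you have not shown such terms exist at every stage, nor that the resulting coefficient escapes the divisions by $p$. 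You correctly flag this as ``the main obstacle''; as written it is a gap, not a proof. The analogous unproved structural claim about $\Delta^+(d_{k,j})$ (``primitive part plus a remainder supported on...'') that you lean on for (\ref{(h)}) and (\ref{(l)}) has the same character, though (\ref{(l)}) at least can be rescued by evaluating only on the power sums $d_{0,j}$ as in (\ref{(k)}). Replace these with the $d$/$\psi$-shift arguments and the whole proof closes.

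One small correction: your proof of (\ref{(f)}) for $p>2$ via $(x,0)\wgeq(1,0)$ and $1+1=\infty$ is fine, but the case split on $p$ is unnecessary; the uniform argument $(x,0)+(x',0)\wgeq(2,0)+(2,0)=4\cdot 1=\infty$ works for all $p$ by (\ref{(e)}).
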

\begin{proof}
We will identify any element of $\bN^2\cup\{\infty\}$ with its image in $\Wp(\Bool)$ 
under the bijection~(\ref{map:p-typical-coords}).

(\ref{(e)}): Let $\varphi$ denote the map $\bN\to\Wp(\bN)$. 
Since it is $\Lambda_{\bN,(p)}$-equivariant, we have
	$$
	(\varphi(n))_{i,0}=\big(d^{\circ i}(\varphi(n))\big)_{0,0} = 
	\big(\varphi(d^{\circ i}(n))\big)_{0,0} = d^{\circ i}(n).
	$$
Now for $n\in\bN$, we have $d(n)=(n^p-n)/p$. Thus for $p\geq 3$, we have $d(0)=d(1)=0$ and $d(m)\geq 2$ for
$m\geq 2$. Therefore if $n\leq 1$, the smallest $i$ such that $d^{\circ i}(n)=0$ is $n$, and no such $i$
exists if $n\geq 2$. This finishes the proof for $p\geq 3$. The statement for $p=2$ follows similarly from
$d(0)=d(1)=0$, $d(2)=1$, and $d(m)\geq 3$ for $m\geq 3$.

(\ref{(f)}): By part (\ref{(e)}), we have $(x,0)+(x',0) \wgeq (2,0)+(2,0)=2+2 = \infty$.

(\ref{(g)}): Since the power sum $d_{0,j}=\psi^{\circ j}$ is primitive for the co-addition map
$\Delta^+$, we have $((0,y)+(0,y'))_{0,j} = (0,y)_{0,j}+ (0,y')_{0,j}$. This evaluates to $0$ if
and only if $y,y'\leq j$. Therefore $((0,y)+(0,y'))_{i,j}$ is zero for
$(i,j)=(0,\max\{y,y'\})$ but not for any smaller $(i,j)$. Thus we have
$(0,y)+(0,y')=(0,\max\{y,y'\})$.

(\ref{(h)}): If either $x$ or $y$ is zero, the statement follows because $(0,0)$ is the additive
identity, by part (\ref{(e)}). So assume $x,y\geq 1$.
By~(\ref{eq:d-psi-translations}),
it is enough to show $\psi^{\circ y-1}((x,0)+(0,y))=(x,1)$. Because we have
	$$
	\psi^{\circ y-1}((x,0)+(0,y))=(x,0)+(0,1),
	$$
it is enough to show $(x,0)+(0,1)=(x,1)$ for $x\geq 1$.
Since we have 
	$$\psi((x,0)+(0,1))=\psi(x,0)+\psi(0,1)=(x,0)+(0,0)=(x,0),$$
$(x,0)+(0,1)$ is either $(x,0)$ or $(x,1)$, again by~(\ref{eq:d-psi-translations}).
To rule out $(x,0)$, we can show
	$$d((x,0)+(0,1)) \wgeq (x-1,1).$$
This holds because we have
	$$
	\Delta^+(d) = d\tn 1 + 1\tn d +\cdots 
	$$
and hence 
	\begin{align*}
	d((x,0)+(0,1)) &= d(x,0)+d(0,1)+\cdots \\ &\wgeq d(x,0)+d(0,1) \\ &=(x-1,0)+(0,1) \\ &=(x-1,1),
	\end{align*}
by~(\ref{pro:inequality-comparison}) and induction on $x$.

(\ref{(i)}): By~(\ref{pro:inequality-comparison}),
we have $\infty+z\wgeq \infty$ and hence $\infty+z=\infty$.

(\ref{(j)}): Since $\wgeq$ is an $\Alg{\bN}$-preorder,
we have $(x,0)(x',0)\wgeq (2,0)(2,0)$. Therefore it is enough to show $(2,0)^2=\infty$.
We have
	\begin{equation}
		\label{eq:d-sum-law}
		\Delta^\times(d) = \psi\tn d + d\tn \psi + p d\tn d
	\end{equation}
and hence
	\begin{align*}
	d((2,0)(2,0)) &= \psi(2,0)d(2,0) + d(2,0)\psi(2,0) + pd(2,0)^2 \\
		&= (2,0)(1,0) + (1,0)(2,0) +p(1,0)^2 \\
		&= (2,0)+(2,0) + p(1,0) \\
		&= \infty + p(1,0) \\
		&= \infty,
	\end{align*}
by~(\ref{eq:d-psi-translations}) and parts~(\ref{(e)}),~(\ref{(f)}), and~(\ref{(i)}).
It follows that $(0,2)^2=\infty$.

(\ref{(k)}): For any $j$, the element $d_{0,j}=\psi^{\circ j}$ is group-like under $\Delta^\times$.
Therefore we have 
	$$
	\big((0,y)(0,y')\big)_{0,j} = \big((0,y)\big)_{0,j}\cdot \big((0,y')\big)_{0,j},
	$$
which is $0$ if and only if $j\geq\min\{y,y'\}$.
Therefore $\big((0,y)(0,y')\big)_{i,j}$ is zero for $(i,j)=(0,\min\{y,y'\})$ but not for any
smaller $(i,j)$. It follows that $(0,y)(0,y')$ is $(0,\min\{y,y'\})$.

(\ref{(l)}): As in part~(\ref{(k)}), we have $\big((x,0)(0,y)\big)_{0,j}=(x,0)_{0,j}(0,y)_{0,j}$, for any
$j\in\bN$. Since $x\geq 1$, we have $(x,0)_{0,j}=1$, and so $\big((x,0)(0,y)\big)_{0,j}$ is $(0,y)_{0,j}$.
Thus we have $(x,0)(0,y)=(0,y)$.

(\ref{(m)}): As in part (\ref{(l)}), we have $\big(\infty(0,y)\big)_{0,j}=(0,y)_{0,j}$, and so
$\infty(0,y)=\infty$.

(\ref{(n)}): By part~(\ref{(e)}), the element $(1,0)$ is the multiplicative identity; so the result is
clear for $x=1$. If $x\geq 2$, then~(\ref{eq:d-sum-law}) and~(\ref{pro:inequality-comparison}) imply
$$
d(\infty(x,0))\wgeq \psi(\infty)d(x,0)=\infty(x-1,0),
$$ 
which evaluates to $\infty$ by induction.
Therefore we have $d(\infty(x,0))=\infty$ and hence $\infty(x,0)=\infty$. \end{proof}

\begin{proposition}
Consider the canonical map $W(\Bool)\to\Wp(\Bool)$, in the coordinates given by the bijections 
(\ref{map:W(Bool)-coords}) and (\ref{map:p-typical-coords}).
If $x\leq 1$ (or rather $x\leq 2$ for $p=2$), then an element $(x,y)\in\bN^2$ is sent to 
$(x,z)$, where $z$ is the smallest element of $\bN$ such that $p^{z}> y$. All other elements are sent
to $\infty$.
\end{proposition}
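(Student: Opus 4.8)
The plan is to exploit the fact that the canonical map $\varphi\colon W(\Bool)\to\Wp(\Bool)$ is nothing but restriction of semiring homomorphisms along the inclusion $\Lambda_{\bN,(p)}\hookrightarrow\Lambda_\bN$ of composition $\bN$-algebras; in particular $\varphi$ is an $\bN$-algebra homomorphism and carries the unit map $\bN\to W(\Bool)$ to the unit map $\bN\to\Wp(\Bool)$. Combining this with~(\ref{pro:W(Bool)-description})(\ref{part:N-to-W(Bool)}) and~(\ref{prop:p-typical})(\ref{(e)}) gives $\varphi(x,0)=(x,0)$ when $x\leq 1$ (resp.\ $x\leq 2$ for $p=2$) and $\varphi(x,0)=\infty$ otherwise. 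Also $\varphi(\infty)=\infty$, since the Witt vector $\infty\in W(\Bool)$ sends every nonzero element of $\Lambda_\bN$ to $1$, hence sends every $d_{i,j}$ to $1$, so that $\varphi(\infty)$ is the all-ones element of $\Wp(\Bool)$. As $(x,y)=(x,0)+(0,y)$ in $W(\Bool)$ by the second part of~(\ref{pro:W(Bool)-description}) and $\varphi$ is additive, it remains only to compute $\varphi(0,y)$.

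For this, the key observation is that one need not evaluate $(0,y)$ on all the generators $d_{i,j}$, but only on the one-parameter family $d_{0,j}=\psi^{\circ j}$. Since $\psi=\psi_p=\sum_i x_i^p$ and plethysm satisfies $p_a\circ p_b=p_{ab}$, we have $d_{0,j}=p_{p^j}=m_{(p^j)}$, the monomial symmetric function of the one-row partition of size $p^j$. Hence $\varphi(0,y)(d_{0,j})=(0,y)(m_{(p^j)})$, which by the explicit description of $(0,y)$ in~(\ref{subsec:explicit-W(Bool)}) is $0$ precisely when $p^j\geq y+1$, i.e.\ when $j\geq z$. Now $\varphi(0,y)$ is an element of $\Wp(\Bool)$, so under the bijection~(\ref{map:p-typical-coords}) it corresponds either to $\infty$ or to a pair $(X,Z)\in\bN^2$, and by~(\ref{eq:p-typical-W(Bool)}) and the discussion following it, its set of vanishing coordinates is accordingly either empty or equal to $\{(i,j):i\geq X,\ j\geq Z\}$. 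This vanishing set meets the line $i=0$ in exactly $\{(0,j):j\geq z\}$, which is nonempty; this forces $X=0$ and $Z=z$, so $\varphi(0,y)=(0,z)$.

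Assembling: when $x$ lies in the good range, $\varphi(x,y)=\varphi(x,0)+\varphi(0,y)=(x,0)+(0,z)=(x,z)$ by~(\ref{prop:p-typical})(\ref{(h)}); otherwise $\varphi(x,y)=\infty+(0,z)=\infty$ by~(\ref{prop:p-typical})(\ref{(i)}); and $\varphi(\infty)=\infty$. This is exactly the asserted description. The one delicate step is the last one of the second paragraph, where the value of the entire Witt vector $\varphi(0,y)$ is pinned down from its behaviour on the single family $d_{0,j}$ alone --- this works only because of the rigidity of $\Wp(\Bool)$ encoded in~(\ref{eq:p-typical-W(Bool)}), namely that every element has vanishing set of the form $\{i\geq X,\ j\geq Z\}$ or $\emptyset$. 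Should one wish to avoid this shortcut, the alternative is to compute the monomial constituents of every plethystic power $d^{\circ i}$ directly --- using that $d$ has no monomial of the form $x_k^p$, so that each such constituent has at least two parts while $m_{1^{p^i}}$ is always one of them, with an extra complication when $p=2$ --- but this is a longer, purely combinatorial route that I would keep only as a fallback.
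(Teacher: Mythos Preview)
Your proof is correct and follows essentially the same approach as the paper's own proof: handle $\infty$, then $(x,0)$ via the unit map and~(\ref{prop:p-typical})(\ref{(e)}), then $(0,y)$ by evaluating on the power sums $d_{0,j}=\psi^{\circ j}=\psi_{p^j}$, and finally assemble via $(x,y)=(x,0)+(0,y)$ and parts~(\ref{(h)}),~(\ref{(i)}). The only difference is that you make explicit the rigidity argument---that an element of $\Wp(\Bool)$ is determined by its vanishing set, which must be empty or of the form $\{i\geq X,\ j\geq Z\}$---which the paper's terse ``Therefore the image\dots is $(0,z)$'' leaves implicit.
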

\begin{proof}
It is clear that $\infty$ is sent to $\infty$. For elements of the form $(x,0)$, the result
follows from part (\ref{(e)}) of (\ref{prop:p-typical}). Now consider an element of the form 
$(0,y)\in W(\Bool)$. Then
$(0,y)(\psi_{p^j})=0$ if and only if $p^j>y$. Therefore the image $a\in\Wp(\Bool)$ of $(0,y)$
is $(0,z)$, where $z$ is the smallest element of $\bN$ such that $p^z>y$.

For an element of the form $(x,y)\in W(\Bool)$, we have $(x,y)=(x,0)+(0,y)$. So the result
follows from the previous cases and parts (\ref{(h)}) and (\ref{(i)}) of (\ref{prop:p-typical}). 
\end{proof}

\subsection{Remarks}
The set of $p$-typical symmetric functions of length $k$ is defined to be the sub-$\bN$-algebra
of $\Lambda_\bN$ generated by the subset $\setof{d_{i,j}}{i+j\leq k}$, and the functor it 
represents is defined to be $W_{(p),k}$, the $p$-typical Witt functor of length $k$.
Then we have the following analogue of (\ref{eq:p-typical-W(Bool)}):
	\begin{equation*}
	W_{(p),k}(\Bool) 
		= \bigsetof{(a_{i,j})\in \Bool^{T_k}}{a_{i,j}=0 \Leftrightarrow a_{i,j+1}=a_{i+1,j}=0
		\text{ for }i+j\leq k-1},
	\end{equation*}
where $T_k=\setof{(i,j)\in\bN^2}{i+j\leq k}$. But there is no analogue of 
(\ref{map:p-typical-coords}). Indeed, the triangular array
	$$
	(a_{i,j}) = \left(\begin{array}{ccc} 0 \\ 1 & 1 \\ 1 & 1 & 0 \end{array}\right)
	$$
has two blocks of zeros yet defines an element of $W_{(p),2}(\Bool)$.

This also shows that the map $W_{(p),4}(\Bool)\to W_{(p),3}(\Bool)$ is not surjective, because
the element above does not lift to $W_{(p),4}(\Bool)$.
In fact, as one easily checks, this element lifts to $W_{(p),3}(\bRpl)$, and so
the map $W_{(p),4}(\bRpl)\to W_{(p),3}(\bRpl)$ is not surjective either.

\section{Complements: countability of some Witt vector algebras}
\label{sec:countability}

If $A$ is uncountable, then both $W(A)$ and $\Wsch(A)$ are uncountable because the Teichm\"uller map
$A\to W(A)$ is injective. If $A$ is countable, we have the following partial result when it admits
an embedding into $\bRpl$:

\begin{theorem}
	\label{thm:density}
	Let $A$ be a countable sub-$\bN$-algebra of $\bRpl$. 
		\begin{enumerate}
			\item If $A$ is not dense, then $W(A)$ and $\Wsch(A)$ are countable. 
			\item If $A$ is dense and of the form $B\cap\bRpl$
				for some subring $B\subseteq\bR$, then $W(A)$ and $\Wsch(A)$ are uncountable.
		\end{enumerate}
\end{theorem}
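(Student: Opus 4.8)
If $A$ is not dense then $A$ is in fact discrete near $0$: a gap $(c,c')\cap A=\emptyset$ lies in some $(n,n+1)$, and if $A$ contained a positive element $a<c'-c$ then the progression $0,a,2a,\dots\subseteq A$ would hit $(c,c')$, a contradiction; so $A\cap(0,\delta)=\emptyset$ for $\delta:=c'-c>0$. Now take $a\in\Wsch(A)\subseteq\Wsch(\bRpl)$. By Lemma~(\ref{lem:goes-extinct}) we have $\min_{\lambda\vdash k}a(s_\lambda)\to 0$; since the values $a(s_\lambda)$ lie in $A$ and avoid $(0,\delta)$, once this minimum drops below $\delta$ it must equal $0$, so $a(s_\lambda)=0$ for some $\lambda$, i.e.\ the image of $a$ in $\Wsch(\Bool)$ is not $\infty$. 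By (\ref{thm:intro-C})(\ref{part:disc-inv}) together with the Edrei--Thoma theorem~(\ref{thm:Witt-E-T}), $\srp(a)=g/h$ is then a rational function in lowest terms with $(\deg g,\deg h)$ equal to the coordinates of that image. Hence $\Wsch(A)$ is the union over $(x,y)\in\bN^2$ of the sets of Witt vectors whose $\srp$-image is of type $(x,y)$; since $\srp$ is injective on $\Wsch(A)$ (additive cancellation) and a type-$(x,y)$ rational function is determined by its first $x+y$ coefficients (two such agreeing mod $t^{x+y+1}$ have a difference with numerator of degree $\le x+y$ divisible by $t^{x+y+1}$, hence zero), each such set injects into $A^{x+y}$ and is countable. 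So $\Wsch(A)$, and a fortiori $W(A)\hookrightarrow\Wsch(A)$, is countable.

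\textbf{Part (2): reduction.} Assume $A=B\cap\bRpl$ is dense; note $B=A\cup(-A)$ is countable. It suffices to prove $W(A)$ uncountable, since $W(A)\hookrightarrow\Wsch(A)$. Because $\Lambda_\bZ=\Lambda_\bN\tn_\bN\bZ=\bZ[e_1,e_2,\dots]$, a semiring map $\Lambda_\bN\to A$ extends uniquely to a ring map $\Lambda_\bZ\to B$, and such a ring map restricts to $\Lambda_\bN\to A$ exactly when it is nonnegative on all $m_\lambda$. Using $\srp$ and the classical identification $W(B)\cong 1+tB[[t]]$ for the ring $B$, this yields a bijection $W(A)\longisomap\mathcal T\cap(1+tB[[t]])$, where $\mathcal T:=\srp(W(\bRpl))=\bigsetof{e^{\gamma t}\prod_{i\ge1}(1+\alpha_i t)}{\gamma,\alpha_i\ge0,\ \sum_i\alpha_i<\infty}$ by Corollary~(\ref{cor:totalpos-summary}). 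The set $\mathcal T$ is closed for the coefficientwise topology (it is the locus $\{z(m_\lambda)\ge 0\ \forall\lambda\}$, each $z(m_\lambda)$ being a polynomial in the coefficients), so $W(A)$ is identified with a closed subset of the Polish space $1+tB[[t]]\cong B^{\bN}$.

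\textbf{Part (2): producing a continuum.} The plan is to embed a Cantor set into $\mathcal T\cap(1+tB[[t]])$ by an inductive construction: build totally positive polynomials $P_\sigma\in 1+tB[t]$ indexed by finite binary strings $\sigma$, with $P_{\sigma 0},P_{\sigma 1}$ obtained from $P_\sigma$ by adjoining small linear factors $(1+\alpha t)$ with $\alpha\in A$ (which keeps total positivity, since $\mathcal T$ is closed under multiplication, and keeps coefficients in $B$, since $\alpha\in B$), chosen differently for the two children so that along each branch $\eps\in\{0,1\}^{\bN}$ the partial products converge coefficientwise to a series $f_\eps$, with distinct branches giving distinct $f_\eps$. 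The element $f_\eps$ is automatically in $\mathcal T$ (closedness), so the only thing to check is $f_\eps\in 1+tB[[t]]$; equivalently, one wants $W(A)$ to fail to be ``scattered,'' which by the perfect set property would already force cardinality $2^{\aleph_0}$. The new ingredient beyond the countable constructions that always exist (finite products of Teichm\"uller lifts, $e^{qt}$ with $q$ rational, totally positive rational functions) is precisely the density of $A$, which supplies, at each node, enough totally positive polynomials with $B$-coefficients to make a genuine binary choice.

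\textbf{The main obstacle.} The hard part will be ensuring that the branch limits $f_\eps$ actually have all coefficients in $B$: an infinite product $\prod_i(1+\alpha_i t)$ of factors with $\alpha_i\in A$ generically acquires coefficients outside $B$ in the limit (already $\prod_i(1+t/2^i)$ leaves $\bZ[1/2]$ at the $t^2$-coefficient), so one cannot simply adjoin arbitrary small $A$-factors. The construction must instead adjoin at stage $k$ a carefully chosen totally positive \emph{block} $\prod_j(1+\alpha_j t)$ with $\alpha_j\in A$, engineered so that the low-order Taylor coefficients of the partial products stabilize fast enough that the $n$-th coefficient of $f_\eps$ coincides with the $n$-th coefficient of some $P_\sigma$ and hence lies in $B$ --- while still leaving a two-fold choice at every node. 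Organizing this stabilization simultaneously with total positivity and with the branching (and, for $\Wsch$, one has the extra freedom of poles, i.e.\ factors $(1-\beta t)^{-1}$, which may make the blocks easier to find) is where I expect the real work to be; once it is in place, the reduction above finishes the proof.
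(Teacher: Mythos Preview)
Your Part~(1) is correct and is essentially the paper's argument, spelled out more carefully. Both observe that non-density forces a gap $(0,\delta)\cap A=\emptyset$, so Lemma~(\ref{lem:goes-extinct}) yields $a(s_\lambda)=0$ for some $\lambda$, whence by~(\ref{thm:intro-C}) and Edrei--Thoma the series $\srp(a)$ is rational. The paper counts by noting the numerator and denominator have coefficients in the (countable) fraction field of $A$; your count via ``determined by the first $x+y$ coefficients'' is an equally valid alternative.

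For Part~(2), your reduction---identifying $W(A)$ with $\mathcal T\cap(1+tB[[t]])$ and using that $\mathcal T$ is closed in the coefficientwise topology---is exactly right and matches the paper. But the construction you propose cannot be completed as stated, and you essentially say so yourself. Adjoining any nontrivial block $\prod_j(1+\alpha_j t)$ with $\alpha_j\in A\subseteq\bRpl$ necessarily has $e_1=\sum_j\alpha_j>0$, so it strictly increases the $t^1$-coefficient (and every other coefficient). Hence no sequence of such blocks can make any coefficient of the partial products eventually freeze, and the limit coefficients will be infinite positive sums of elements of $A$, which there is no mechanism for forcing into $B$. The obstruction you named is fatal for this particular scheme.

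The paper's key move is to \emph{drop the requirement that the roots lie in $A$}. One works with the inverse system of sets $W'_k(B)$, where $W'_k(B)$ consists of those truncations in $1+tB[[t]]/(t^{k+1})$ that lift to some $\prod_{j\geq 1}(1+b_j t)$ with real $b_1>b_2>\cdots>0$. Given $\bar f\in W'_{k-1}(B)$, pick such a lift $f(t)=\prod_j(1+b_j t)$. For all small real $\varepsilon$ the polynomial $\varepsilon t^k+\prod_{j=1}^k(1+b_j t)$ still factors as $\prod_{j=1}^k(1+b'_j t)$ with $b'_1>\cdots>b'_k>b_{k+1}$, and setting
\[
f_\varepsilon(t)=\prod_{j=1}^k(1+b'_j t)\prod_{j>k}(1+b_j t)
\]
gives $f_\varepsilon\equiv f+\varepsilon t^k\bmod t^{k+1}$. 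Since $B$ is dense in $\bR$, infinitely many small $\varepsilon$ make the $k$-th coefficient land in $B$, so the fiber of $W'_k(B)\to W'_{k-1}(B)$ over $\bar f$ is infinite and $\varprojlim_k W'_k(B)$ is uncountable. The roots $b'_j$ are arbitrary positive reals; only the \emph{truncated coefficients} are forced into $B$, one at a time, and your closedness observation then places the limits in $\srp(W(A))$.
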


\begin{proof}
(1): As in (\ref{subsec:proof-of-integral-E-T}), the series associated to any element of $\Wsch(A)$
is of the form $f(t)/g(t)$, where $f(t)$ and $g(t)$ are polynomials with coefficients
in the smallest subfield $K\subseteq \bR$
containing $A$. Since $K$ is countable, there are only countably many such series.
Further, $W(A)$ is countable because it is a subset of $\Wsch(A)$.

(2): For the same reason, here it is enough to show $W(A)$ is uncountable.
First observe that a Witt
vector $y\in W(\bR)$ lies in $W(\bRpl)$ if and only if, for all $k\geq 0$, there exists a Witt vector $x\in
W(\bRpl)$ such that $\srp(y)\equiv \srp(x) \bmod t^{k+1}$. Indeed, to have $y\in W(\bRpl)$ it is
enough to have $y(m_\lambda)\geq 0$ for all partitions $\lambda$. But each such $m_\lambda$ is a polynomial
in finitely many elementary symmetric functions $e_1,\dots,e_k$. Choosing
$x\in W(\bRpl)$ such that $\srp(y)\equiv \srp(x) \bmod t^{k+1}$, we have
$y(m_\lambda)=x(m_\lambda)\geq 0$.
 
Combining this with the equality $W(B\cap\bRpl)=W(B)\cap W(\bRpl)$, we see that it is enough to show there are
uncountably many series in $1+tB[[t]]$ that can be approximated arbitrarily well in the $t$-adic topology by
series of the form $\srp(x)$, where $x\in W(\bRpl)$. In fact, it will be convenient to consider
approximations by a slightly restricted class of series. So first let $W'(\bR)$ denote the subset of
$1+t\bR[[t]]$ consisting of series of the form $\prod_j(1+b_jt)$, for some reals $b_1>b_2>\cdots>0$. Then 
by~(\ref{cor:totalpos-summary}), we have
$W'(\bR)\subseteq\srp(W(\bRpl))$. Second, for any integer $k\geq 0$, let $W'_k(B)$ denote the intersection 
of $1+tB[[t]]/(t^{k+1})$ and the image of the truncation map 
	$$
	W'(\bR)\to 1+t\bR[[t]]/(t^{k+1}), \quad f(t)\mapsto f(t)\bmod t^{k+1}.
	$$
Then the inverse
limit $\lim_k W'_k(B)$, viewed as a subset of $1+t\bR[[t]]$, consists of series with coefficients in $B$ that
can be approximated arbitrarily well in the $t$-adic topology by elements of $W'(\bR)$. By the remarks above, 
it is contained in $\srp(W(B\cap\bRpl))$.

Therefore it is enough to show that $\lim_k W'_k(B)$ is uncountable. Since $W'_0(B)$ is nonempty, it is enough
to show that the fibers of the truncation maps $W'_k(B)\to W'_{k-1}(B)$ have infinitely many elements (or even
two). So consider an element $\bar{f}(t)\in W'_{k-1}(B)$, and choose a lift $f(t)=\prod_j(1+b_jt)\in W'(\bR)$.
Then for all sufficiently small $\varepsilon\in\bR$, the polynomial $\varepsilon t^k+\prod_{j=1}^{k}(1+b_jt)$ is still of the form $\prod_{j=1}^{k}(1+b'_jt)$,
where $b'_1>\cdots >b'_k>b_{k+1}$. For such $\varepsilon$, write
	$$
	f_\varepsilon(t)=\prod_{j=1}^{k}(1+b'_jt)\prod_{j=k+1}^{\infty}(1+b_jt) \in W'(\bR).
	$$
Then we have 
	\begin{equation}
		\label{eq:loc-approx}
		f_\varepsilon(t)=f(t)+\varepsilon t^k \bmod t^{k+1}.
	\end{equation}
Therefore the series $f_\varepsilon(t)$ are distinct for distinct $\varepsilon$,
and they reduce to $\bar{f}(t)$ modulo $t^{k}$. Further, we
have $f_\varepsilon(t)\in W'_k(B)$ if (and only if) the $k$-th coefficient of $f_\varepsilon(t)$ lies in $B$.
Since $A$ is dense in $\bRpl$ and since $B$ contains $\pm A$, we know that
$B$ is dense in $\bR$. Therefore equation (\ref{eq:loc-approx}) guarantees that there are infinitely many 
sufficiently small $\varepsilon\in\bR$ such that the $k$-th coefficient of $f_\varepsilon(t)$ lies in $B$. 
In other words, the fiber of the map $W'_k(B)\to W'_{k-1}(B)$ over the arbitrary element $\bar{f}(t)$ is 
infinite. 
\end{proof}

\subsection{Examples}
Since one can embed $\bN[x_1,x_2,\dots]$ as a non-dense sub-$\bN$-algebra of $\bRpl$, 
we see that $W(A)$ and $\Wsch(A)$ are countable for any
sub-$\bN$-algebra $A$ of $\bN[x_1,x_2,\dots]$. In particular, we have countability for
$A=\bN[\alpha]$, where $\alpha$ is any transcendental number.
This holds even when $|\alpha|<1$, in which case $A$ is
dense in $\bRpl$. So the condition $A=B\cap\bRpl$ in (\ref{thm:density})(2) cannot be removed. Similarly,
$W(A)$ and $\Wsch(A)$ are countable for the dense subalgebra $A=\bN[2-\sqrt{2}]\subseteq\bRpl$, because
$2-\sqrt{2}$ is conjugate to $2+\sqrt{2}$, which is greater than $1$.

Examples where $W(A)$ and $\Wsch(A)$ are uncountable include $A=\bN[1/m]$ for any $m=2,3,\dots$
and $A=\bN[\sqrt{2}-1]=\bZ[\sqrt{2}]\cap\bR_+$.

Finally, we emphasize that there are dense sub-$\bN$-algebras $\bRpl$ to which 
(\ref{thm:density})(2) does not 
apply---for example $\bN[\sqrt{3}-1]$. At the time of this writing, we do not know whether either
$W(A)$ or $\Wsch(A)$ is countable for any such algebras $A$.

\begin{theorem}
\label{thm:Wn-count-uncount}
	Let $n$ be a nonnegative integer, and write $A=\bN/(n=n+1)$.
	\begin{enumerate}
		\item $W(A)$ is countable if and only if $n\leq 2$.
		\item $\Wsch(A)$ is countable if and only if $n\leq 2$.
	\end{enumerate}
\end{theorem}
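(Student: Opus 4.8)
The plan is to separate the cases according to $n$, reducing both halves to the known structure of $W(\Bool)$ and $\Wsch(\Bool)$. For $n=0$ the semiring $A$ is the zero semiring and the statement is trivial; for $n=1$ we have $A=\Bool$, so $W(A)$ and $\Wsch(A)$ are countable by Theorem~\ref{thm:intro-B}. For the remaining cases note that $A=\bN/(n=n+1)=\{0,1,\dots,n\}$ with $a+b=\min(a+b,n)$ and $a\cdot b=\min(ab,n)$, and that the semiring map $A\to\Bool$ (Proposition~\ref{prop:nonrings-map-to-Bool}) induces maps $W(A)\to W(\Bool)$ and $\Wsch(A)\to\Wsch(\Bool)$; since $W(\Bool)$ and $\Wsch(\Bool)$ are countable by Theorem~\ref{thm:intro-B}, for the countability half it suffices to show that each fibre of these maps is finite.

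\emph{Uncountability for $n\geq 3$.} The key arithmetic observation is that for $n\geq 3$ the set $\{v\in A:v^2=n\}=\{\lceil\sqrt n\,\rceil,\dots,n\}$ has at least two elements, whereas for $n\leq 2$ it does not. I would exhibit $2^{\aleph_0}$ semiring homomorphisms $\Lambda_\bN\to A$ directly: for each function $c$ from $\{1,2,3,\dots\}$ to $\{v\in A:v^2=n\}$, let $g_c$ send $m_\emptyset$ to $1$, send $m_{(i)}=\psi_i$ to $c(i)$ for each $i\geq 1$, and send every other $m_\lambda$ to $n$. To check multiplicativity, write the product $m_\lambda m_\mu$ of two nonzero monomial symmetric functions in the monomial basis. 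By Proposition~\ref{pro:W(Bool)-comblem1} a one-row partition occurs as a constituent only when both $\lambda$ and $\mu$ are one-row; in every other case all constituents are nonzero and not one-row, so (since $n$ absorbs $+$ and every coefficient is $\geq 1$) the right-hand side equals $n$, while the left-hand side also equals $n$ because $n\cdot v=n$ for $v\geq 1$ and $vv'=n$ whenever $v^2=(v')^2=n$. In the remaining case $\lambda=(a)$, $\mu=(b)$ with $a,b\geq 1$, one has $\psi_a\psi_b=\psi_{a+b}+m_{(a,b)}$ (or $\psi_{2a}+2m_{(a,a)}$), whose two-row constituent again forces the right-hand side to $n$, matching the left-hand side $c(a)c(b)=n$. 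Distinct $c$ give distinct $g_c$, so $W(A)$ is uncountable. The same construction with $s$ in place of $m$, using Propositions~\ref{pro:Wsch(Bool)-comblem1} and~\ref{pro:Wsch(Bool)-lr-extremals}(3) and the identity $s_{(a)}s_{(b)}=\sum_{m=0}^{\min(a,b)}s_{(a+b-m,m)}$, shows $\Wsch(A)$ is uncountable.

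\emph{Countability for $n=2$.} Here $A=\{0,1,2\}$ and $a+b=2$ as soon as $a,b\geq 1$. Fix a fibre of $W(A)\to W(\Bool)$, i.e.\ fix the prime partition ideal $I$ (which by Proposition~\ref{pro:W(Bool)-primeids} is $I_{(x_0,y_0)}$ for some $x_0,y_0$, or else $\emptyset$) on which every $f$ in the fibre vanishes; on $P\setminus I$ every such $f$ takes values in $\{1,2\}$. Using Proposition~\ref{pro:W(Bool)-lr-extremals}(3) together with the fact that $a+b=2$ whenever $a,b\geq1$, one sees that if $m_\lambda^2$ has at least two distinct constituents outside $I$ then $f(m_\lambda)^2\geq 2$, hence $f(m_\lambda)=2$. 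A combinatorial analysis of $m_\lambda^2$ via Proposition~\ref{pro:W(Bool)-comblem1} shows this conclusion fails only when $\lambda$ is a rectangle $(c^k)$ with $k\leq x_0$ and $c>y_0$; for such $\lambda$ the partition $(c+c')^k$ is the only constituent of $m_{(c^k)}m_{(c'^k)}$ outside $I$ and it occurs with multiplicity $1$, so $c\mapsto f(m_{(c^k)})$ is a semigroup homomorphism $(\{y_0+1,y_0+2,\dots\},+)\to(\{1,2\},\times)$, and every such homomorphism is constant. Hence $f$ is determined by $I$ together with at most $x_0$ binary choices, so the fibre is finite; over the fibre of $\infty$ the same argument gives a single point. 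Summing finite fibres over the countable set $W(\Bool)$ shows $W(A)$ is countable, and the argument for $\Wsch(A)$ is the same \emph{mutatis mutandis}, with Schur constituents and Propositions~\ref{pro:Wsch(Bool)-comblem1} and~\ref{pro:Wsch(Bool)-lr-extremals}(3) replacing their monomial counterparts.

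I expect the main obstacle to be the $n=2$ countability, and within it the combinatorial determination of exactly which partitions $\lambda\notin I$ leave $f(m_\lambda)$ unforced, together with the verification that the analysis — in particular the semigroup argument on the rectangular rays — transfers verbatim to the Schur setting; the uncountability for $n\geq 3$, by contrast, is short once the arithmetic fact about $\{v:v^2=n\}$ and the two-constituent lemmas are in hand.
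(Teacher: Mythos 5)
Your uncountability argument for $n\geq 3$ is correct but takes a genuinely different route from the paper. The paper fixes the value $n-1$ or $n$ on an \emph{arbitrary} subset $U$ of nonzero partitions and uses $2(n-1)\geq n$ and $(n-1)^2\geq n$ together with~(\ref{pro:W(Bool)-lr-extremals})(3); you restrict the free choices to one-row partitions, allowing any values in $\{v\in A:v^2=n\}$, and use the Pieri-type expansions $\psi_a\psi_b$, $s_{(a)}s_{(b)}$ plus the containment Corollaries~(\ref{cor:W(Bool)-comblem2}),~(\ref{cor:Wsch(Bool)-comblem2}) to enforce multiplicativity. Both are short and both work; yours is slightly more delicate in the one-row-times-one-row case but is otherwise equally clean. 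Your observation that every semigroup homomorphism $(\{y_0+1,y_0+2,\dots\},+)\to(\{1,2\},\times)$ is constant is also correct (compare $\phi((y_0+1)m)$ computed two ways) and is actually a sharper statement than what the paper records.

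However, there is a genuine gap in the $n=2$ countability argument. Your criterion for forcing $f(m_\lambda)=2$ — namely that $m_\lambda^2$ has at least two \emph{distinct} constituents outside $I$ — is strictly weaker than what the paper uses, and it does fail for partitions other than the rectangles you list. Take $I=I_{(0,2)}$ and $\lambda=(2)$: then $\lambda\notin I$, $m_{(2)}^2=\psi_2^2=\psi_4+2m_{(2,2)}$, the constituent $(4)$ lies in $I$, and $(2,2)$ is the \emph{unique} constituent outside $I$. So your criterion gives no information, yet $\lambda=(2)$ is not of the form $(c^k)$ with $k\leq x_0=0$. The value $f(m_{(2)})=2$ is still forced, but only by the multiplicity: $f(m_{(2)})^2=2f(m_{(2,2)})\geq 2$. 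This is exactly the paper's ``cross-term'' argument, based on the quantity $d_\lambda(a)=\sum_{\nu\notin\pker(a)}b^\nu_{\lambda\lambda}$ rather than on distinct constituents; it is needed for the whole regime $0<\lambda_{x_0+1}\leq y_0$. Your characterisation of the unforced $\lambda$'s also needs $k=x_0$ rather than $k\leq x_0$ (for $k<x_0$ two constituents of length $\leq x_0$ already survive). Finally, your closing sentence claiming the $n=2$ argument transfers ``mutatis mutandis'' to $\Wsch$ is an overclaim: the paper explicitly says the Schur modifications are ``more elaborate'' and declines to record them, and the cross-term multiplicity phenomenon has no automatic Littlewood--Richardson analogue (e.g.\ $c^\nu_{\lambda\lambda}=1$ for $\nu=2\lambda$ and $\nu=\lambda\uplus\lambda$), so this step would need a separate argument.
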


Note the contrast between the uncountability for $n\geq 3$ and the countability of $W\left(\bN\right)$
and $\Wsch\left(\bN\right)$, given by~(\ref{cor:totalpos-summary}) and~(\ref{thm:intro-A}).

\begin{proof}
(1): When $n=0$, the result is clear, and
when $n=1$, it follows immediately from (\ref{subsec:explicit-W(Bool)}).

Now consider the case $n=2$. For any $a\in W(A)$, write 
	$$
	\pker(a)=\setof{\lambda\in P}{a(m_\lambda)=0}.
	$$
A basic result we will use more than once is the following. Let $\lambda$ be a
partition, and write $d_\lambda(a)=\sum_{\nu\not\in\pker(a)} b^\nu_{\lambda\lambda}$, where
$m_\lambda^2 = \sum_{\nu\in P} b^\nu_{\lambda\lambda} m_\nu$; then we have
	\begin{equation}
	\label{eq:local-238}
	d_\lambda(a) \geq 2 \Longrightarrow a(m_\lambda)=2
	\end{equation}
Indeed, since all elements in $A$ are multiplicatively idempotent, we have
	$$
	a(m_\lambda) = a(m_\lambda)^2 = a(m_\lambda^2) = \sum_\nu b^\nu_{\lambda\lambda}a(m_\nu)
		\geq d_\lambda(a) \geq 2
	$$
and hence $a(m_\lambda)=2$.

Now consider the map $\varphi\:W(A)\to W(\Bool)$ induced by the unique $\bN$-algebra map $A\to\Bool$. Since
$W(\Bool)$ is countable, it is enough to show that for each $z\in W(\Bool)$, there are only finitely many
$a\in W(A)$ such that $\varphi(a)=z$.

First consider the case $z=\infty$. Then for any nonzero partition $\lambda$, the symmetric function
$m_\lambda^2$ has at least two monomial constituents, by~(\ref{pro:W(Bool)-lr-extremals}). Further, neither
constituent is in $\pker(a)$ since $\pker(a)=\emptyset$. Therefore we have $a(m_\lambda)=2$, by
(\ref{eq:local-238}). Since we also have $a(m_0)=a(1)=1$, there is at most one Witt vector $a$ such that
$\varphi(a)=\infty$.

It remains to consider the case $z=(x,y)$, where $x,y\in\bN$. It is enough to show that
any $a\in\varphi^{-1}(z)$ is determined by its values $a(m_{u^x})$ for $u\leq 2y+1$,
because there are only finitely many possibilities for such values. We will do this by saying 
explicitly how
these values determine each $a(m_\lambda)$; and we will do this by 
considering a nested sequence of cases in $\lambda$.

If $\lambda_{x+1}\geq y+1$, then we have $\lambda\in\pker(a)$ and hence $a(m_\lambda)$ is
zero. In particular, the value of $a(m_\lambda)$ is determined. Therefore we may assume
	$
	\lambda_{x+1}\leq y.
	$
First suppose $\lambda_{x+1}>0$. 
Clearly $m_\lambda^2$ has the monomial constituent
	$$
	\nu=(2\lambda_1, \dots, 2\lambda_x, \lambda_{x+1}, \lambda_{x+1}, \dots).
	$$
Since $\lambda_{x+1}>0$, if we view $\nu$ as a vector, we can write it as the sum of two distinct 
permutations of 
$\lambda$. In other words, the $m_\nu$ term in $m_\lambda^2$ is a cross term, and so its
coefficient $b^\nu_{\lambda\lambda}$ is at least $2$.
Further we have $\nu_{x+1}=\lambda_{x+1}\leq y$, and so $\nu\not\in\pker(a)$.
Therefore we have $d_\nu(a)\geq 2$, and hence $a(m_\lambda)$ can only be $2$, 
by~(\ref{eq:local-238}). In particular, its value is determined when $\lambda_{x+1}>0$.

Therefore we may assume
	$
	\lambda_{x+1}=0.
	$
Now suppose there exists $i$ with $1\leq i\leq x-1$ such that $\lambda_i > \lambda_{i+1}$.
Then $m_\lambda^2$ has two monomial constituents $\nu$:
	\begin{equation*}
	(2\lambda_1, \dots, 2\lambda_x),\quad
	(2\lambda_1, \dots, 2\lambda_{i-1}, \lambda_i + \lambda_{i+1}, \lambda_{i+1} + 
		\lambda_i, 2\lambda_{i+2}, \dots, 2\lambda_x).
	\end{equation*}
Since $\lambda_i > \lambda_{i+1}$, the two differ at position $i$ and are hence distinct.
Also, both have length at most $x$. So neither lies in $I_{(x,y)}=\pker(a)$.
Thus we have $d_\lambda(a)\geq 2$, and again by~(\ref{eq:local-238}) the value of $a(m_\lambda)$ is 
determined.

Therefore we may assume there is no such $i$, and hence that $\lambda=u^x$, for some $u$. 
For any integers $p,q \geq
y+1$, the only constituent of $m_{p^x}m_{q^x}$ which does not belong to $I_{(x, y)}$ is $(p+q)^x$,
and this constituent appears with coefficient $1$. This implies $f(m_{p^x}) f(m_{q^x}) =
f(m_{(p+q)^x})$, and so by induction the values $f(m_{u^x})$ for $u=y+1,\dots, 2y+1$
determine $f(m_{u^x})$ for any $u\geq 2y+2$. This finishes the proof when $n=2$.

Finally, consider the case $n\geq 3$.

Let $U$ be any subset of the set of all nonzero partitions. Clearly, there are uncountably many choices for
this $U$. To show $W(A)$ is uncountable, we will construct an element of $W(A)$ corresponding to $U$ and
show that all such elements of $W(A)$ are distinct.

Define an $\bN$-linear map $f:\Lambda_\bN \to A$ by
	$$
	f\left(m_\lambda\right) =
	\begin{cases}
	1 &\text{if }\lambda = 0 \\ n-1&\text{if }\lambda\in U \\ n&\text{otherwise,}
	\end{cases}
	$$
for every partition $\lambda$.
Clearly, the subset $U$ can be uniquely reconstructed from this map $f$, since $n-1\neq n$ in $A$. We will now show that $f$ is an element of $W(A)$. To prove this, we must verify that $f$ is an $\bN$-algebra homomorphism. Since $f\left(1\right)=f\left(m_{0}\right)=1$, it is enough to prove that 
$f\left(m_\lambda m_\mu\right) = f\left(m_\lambda\right) f\left(m_\mu\right)$ for any partitions $\lambda$ and $\mu$.

If $\lambda=0$ or $\mu=0$, this is obvious, since $m_{0}=1$. 
So we may assume $\lambda$ and $\mu$ are nonzero.
Therefore we have
$$f\left(m_\lambda\right) f\left(m_\mu\right) \geq \left(n-1\right)\left(n-1\right) \geq n$$
since $n\geq 3$. Because every element of $A$ which is at least $n$ must be equal to $n$, this yields $f\left(m_\lambda\right) f\left(m_\mu\right) = n$.

On the other hand, (\ref{pro:W(Bool)-lr-extremals})(3) shows that $m_\lambda m_\mu$ has at least two distinct nonzero constituents. That is, there exist two distinct nonzero partitions $\alpha$ and $\beta$ such that $m_\lambda m_\mu \geq m_\alpha + m_\beta$. 
Therefore we have
	$$
	f\left(m_\lambda m_\mu\right) \geq f\left(m_\alpha + m_\beta\right) 
		= f(m_\alpha) + f(m_\beta)
        \geq \left(n-1\right)+\left(n-1\right) 
		\geq n
	$$
since $n\geq 3$ again. As above, this yields $f\left(m_\lambda m_\mu\right) = n$
and hence
	$$
	f\left(m_\lambda m_\mu\right) =n= f\left(m_\lambda\right) f\left(m_\mu\right).
	$$
Thus $f$ is an element of $W(A)$.

Therefore $W(A)$ has at least as many elements as there are
subsets of nonzero partitions. Hence, it has uncountably many.

(2): For $n\neq 2$, the arguments above also work in the Schur basis, instead of the monomial one.
One simply appeals to (\ref{subsec:explicit-Wsch(Bool)}) instead of
(\ref{subsec:explicit-W(Bool)}), and (\ref{pro:Wsch(Bool)-lr-extremals})(3) instead of
(\ref{pro:W(Bool)-lr-extremals})(3). 
When $n=2$, the modifications we need to make are more elaborate. So we shall not record
them here.
\end{proof}

\frenchspacing
\bibliography{references}
\bibliographystyle{plain}

\end{document}